\documentclass[reqno]{amsart}

\usepackage{amssymb,mathrsfs}
\usepackage{graphicx,xcolor}
\usepackage[bookmarks,hyperfootnotes=false]{hyperref}
\usepackage[nameinlink,capitalise,noabbrev]{cleveref}
\usepackage[msc-links]{amsrefs} 
\usepackage{doi}
   \def\lowfwd #1#2#3{{\mathop{\kern0pt #1}\limits^{\kern#2pt\raise.#3ex
\vbox to 0pt{\hbox{$\scriptscriptstyle\rightarrow$}\vss}}}}
\def\lowbkwd #1#2#3{{\mathop{\kern0pt #1}\limits^{\kern#2pt\raise.#3ex
\vbox to 0pt{\hbox{$\scriptscriptstyle\leftarrow$}\vss}}}}
\def\lowfbwd #1#2#3{{\mathop{\kern0pt #1}\limits^{\kern#2pt\raise.#3ex
\vbox to 0pt{\hbox{$\scriptscriptstyle\leftrightarrow$}\vss}}}}
\def\fwd #1#2{{\lowfwd{#1}{#2}{15}}}
\def\ve{\kern-1.5pt\lowfwd e{1.5}2\kern-1pt}
\def\vedash{{\mathop{\kern0pt e\lower.5pt\hbox{${}
     \scriptstyle'$}}\limits^{\kern0pt\raise.02ex
     \vbox to 0pt{\hbox{$\scriptscriptstyle\rightarrow$}\vss}}}}
\def\ev{\kern-1pt\lowbkwd e{0.5}2\kern-1pt}
\def\vE{{\hskip-1pt{\fwd E3}\hskip-1pt}} 
\def\vf{\kern-2pt\lowfwd f{2.5}2\kern-1pt}
\def\vF{{\hskip-1pt{\fwd F3}\hskip-1pt}} 
\def\vfdash{{\mathop{\kern0pt f\raise 1pt\hbox{${}
     \scriptstyle'$}}\limits^{\kern2pt\raise.02ex
     \vbox to 0pt{\hbox{$\scriptscriptstyle\rightarrow$}\vss}}}}
\def\fv{\lowbkwd f01}

\def\vr{\lowfwd r{1.5}2}
\def\rv{\lowbkwd r02}
\def\vro{\lowfwd {r_0}12}

\def\vrdash{{\mathop{\kern0pt r\lower.5pt\hbox{${}
     \scriptstyle'$}}\limits^{\kern0pt\raise.02ex
     \vbox to 0pt{\hbox{$\scriptscriptstyle\rightarrow$}\vss}}}}
\def\rvdash{{\mathop{\kern0pt r\lower.5pt\hbox{${}
     \scriptstyle'$}}\limits^{\kern0pt\raise.02ex
     \vbox to 0pt{\hbox{$\scriptscriptstyle\leftarrow$}\vss}}}}
\def\vrdashp{{\mathop{\kern0pt r_p\kern-4pt\lower.5pt\hbox{${}
     \scriptstyle'$}}\limits^{\kern0pt\raise.02ex
     \vbox to 0pt{\hbox{$\scriptscriptstyle\rightarrow$}\vss}}}\,}
\def\rvdashp{{\mathop{\kern0pt r_p\kern-4pt\lower.5pt\hbox{${}
     \scriptstyle'$}}\limits^{\kern0pt\raise.02ex
     \vbox to 0pt{\hbox{$\scriptscriptstyle\leftarrow$}\vss}}}\,}
\def\vrddash{{\mathop{\kern0pt r\lower.5pt\hbox{${}
     \scriptstyle''$}}\limits^{\kern0pt\raise.02ex
     \vbox to 0pt{\hbox{$\scriptscriptstyle\rightarrow$}\vss}}}}

\def\vrm{\lowfwd {r_m}{-1}1}

\def\vrn{\lowfwd {r_n}11}

\def\vs{\hskip-1pt\lowfwd s{1.5}1}
\def\sv{{{\hskip-1pt\lowbkwd s{1}1}\hskip-1pt}}
\def\vsdash{{\mathop{\kern0pt s\lower.5pt\hbox{${}
     \scriptstyle'$}}\limits^{\kern0pt\raise.02ex
     \vbox to 0pt{\hbox{$\scriptscriptstyle\rightarrow$}\vss}}}}
\def\svdash{{\mathop{\kern0pt s\lower.5pt\hbox{${}
     \scriptstyle'$}}\limits^{\kern0pt\raise.02ex
     \vbox to 0pt{\hbox{$\scriptscriptstyle\leftarrow$}\vss}}}}
\def\vsddash{{\mathop{\kern0pt s\lower.5pt\hbox{${}
     \scriptstyle''$}}\limits^{\kern0pt\raise.02ex
     \vbox to 0pt{\hbox{$\scriptscriptstyle\rightarrow$}\vss}}}}
\def\svddash{{\mathop{\kern0pt s\lower.5pt\hbox{${}
     \scriptstyle''$}}\limits^{\kern0pt\raise.02ex
     \vbox to 0pt{\hbox{$\scriptscriptstyle\leftarrow$}\vss}}}}
\def\vsdashp{{\mathop{\kern0pt s_p\kern-4pt\lower.5pt\hbox{${}
     \scriptstyle'$}}\limits^{\kern0pt\raise.02ex
     \vbox to 0pt{\hbox{$\scriptscriptstyle\rightarrow$}\vss}}}\,}
\def\svdashp{{\mathop{\kern0pt s_p\kern-4pt\lower.5pt\hbox{${}
     \scriptstyle'$}}\limits^{\kern0pt\raise.02ex
     \vbox to 0pt{\hbox{$\scriptscriptstyle\leftarrow$}\vss}}}\,}
\def\vso{\lowfwd {s_0}11}

\def\vsone{\lowfwd {s_1}11}

\def\vsidash{{\mathop{\kern0pt s_i\kern-3.5pt\lower.3pt\hbox{${}
     \scriptstyle'$}}\limits^{\kern0pt\raise.02ex
     \vbox to 0pt{\hbox{$\scriptscriptstyle\rightarrow$}\vss}}}}

\def\vsm{\lowfwd {s_m}{-1}1}

\def\vsn{\lowfwd {s_n}11}

\def\vsnplusone{\lowfwd {s_{n+1}}{-9}1}

\def\vsqdash{{\mathop{\kern0pt s_q\kern-3.5pt\lower.3pt\hbox{${}
     \scriptstyle'$}}\limits^{\kern0pt\raise.02ex
     \vbox to 0pt{\hbox{$\scriptscriptstyle\rightarrow$}\vss}}}}

\def\vtdash{{\mathop{\kern0pt t\lower-.5pt\hbox{${}
     \scriptstyle'$}}\limits^{\kern0pt\raise.1ex
     \vbox to 0pt{\hbox{$\scriptscriptstyle\rightarrow$}\vss}}}}
\def\tvdash{{\mathop{\kern0pt t\lower-.5pt\hbox{${}
     \scriptstyle'$}}\limits^{\kern0pt\raise.1ex
     \vbox to 0pt{\hbox{$\scriptscriptstyle\leftarrow$}\vss}}}}
\def\vsl{\lowfwd {s_\ell}11}
\def\svl{\lowbkwd {s_\ell}{-1}2}

\def\vsv{\lowfwd {s_v}11}
\def\svv{\lowbkwd {s_v}{-1}2}

\def\vS{{\hskip-1pt{\fwd S3}\hskip-1pt}} 
\def\vSi{\lowfwd {S_i}11}
\def\vSk{\lowfwd {S_k}11}
\def\vSk{\lowfwd {S_k}11}

\def\vSstar{{\mathop{\kern0pt S\lower-1pt\hbox{$^*$}}\limits^{\kern2pt
     \vbox to 0pt{\hbox{$\scriptscriptstyle\rightarrow$}\vss}}}}
\def\vSdash{{\mathop{\kern0pt S\lower-1pt\hbox{${}
     \scriptstyle'$}}\limits^{\kern2pt\raise.1ex
     \vbox to 0pt{\hbox{$\scriptscriptstyle\rightarrow$}\vss}}}}

\def\vt{\lowfwd t{1.5}1}
\def\tv{\lowbkwd t{1.5}1}

\def\vU{{\vec U}} 
\def\vUk{\lowfwd {U\!_k}31}

\usepackage{enumitem}
\usepackage{comment}

\colorlet{darkishRed}{red!80!black}
\colorlet{darkishBlue}{blue!60!black}
\colorlet{darkishGreen}{green!60!black}
\hypersetup{
  draft=false,
  bookmarksopen=true,
  colorlinks,
  linkcolor=darkishRed,
  citecolor=darkishGreen,
  urlcolor=darkishBlue
}

\renewcommand{\PrintDOI}[1]{\doi{#1}}

\renewcommand{\leq}{\leqslant}
\renewcommand{\geq}{\geqslant}

\newcommand{\R}{\mathbb{R}}
\newcommand{\N}{\mathbb{N}}
\newcommand{\F}{\mathcal{F}}

\newcommand{\RR}{\mathcal{R}}

\newtheorem{theorem}{Theorem}[section] 
\newtheorem{corollary}[theorem]{Corollary}
\newtheorem{lemma}[theorem]{Lemma}

\theoremstyle{definition}
\newtheorem{definition}[theorem]{Definition}

\setlist[enumerate]{
  label=\textrm{(\roman*)},   
  font=\normalfont            
}
\def\COMMENT{}
\def\ucl(#1){\lfloor #1 \rfloor}
\def\td{tree-decom\-pos\-ition}

\title{Tangle structure trees~II:\\ trees of tangles and tangle-tree duality\hbox to 3mm{\hfil}}
\author{Hanno von Bergen and Reinhard Diestel}
\date{\today}

\begin{document}

\maketitle

\begin{abstract}
Tangle structure trees, introduced in~\cite{TSTs}, offer a unified data structure that displays all the tangles of a graph or data set together with certificates for the non-existence of any other tangles, either locally%
   \COMMENT{}
   or overall. In this paper we apply tangle structure trees to derive new versions of the two fundamental tangle theorems: the tree-of-tangles theorem, and the tangle-tree duality theorem.

We extend the tree-of-tangles theorem to $\F$-tangles that need not be profiles. When $\F$ consists of stars of separations, as it does in classical tangle-tree duality theorems, we show how to convert tangle structure trees that certify the non-existence of $\F$-tangles into \td s that certify this in the way known from graph tangles, as $S$-trees over~$\F$.
\end{abstract}

\medskip\section{Introduction}

\noindent
   In this sequel to~\cite{TSTs} we explore further applications of {\em tangle structure trees\/}. These are data structures, introduced in~\cite{TSTs}, which simultaneously display

\begin{itemize}\itemsep=2pt
\item all the tangles of a given graph or abstract separation system; and

\item for all its orientations%
   \COMMENT{}
   that are not tangles, certificates of why they are not.  
\end{itemize}

\noindent
   Previously, there were two separate types of theorem in tangle theory for these two purposes: {\em tree-of-tangle\/} theorems, which display all the tangles of a graph or data set in a tree-like way that `distinguishes' different tangles by specifying a minimum-order separation that they orient differently; and {\em tangle-tree duality\/} theorems that display, also in a tree-like way, certificates for why the various orientations%
   \COMMENT{}
   of the separation system are not tangles~-- typically in the form of small subsets of oriented separations that are {\em forbidden\/} in the given type of tangle. These two fundamental theorems, which exist in various forms and degrees of generality, are thought of as the two pillars of both graph and abstract tangle theory.

The unified way in which tangle structure trees serve both these purposes still allows us to extract, if desired, separate corollaries of the two types. In~\cite{TSTs} we did this for the second type: we derived new tangle-tree duality-type theorems for tangles whose defining forbidden subsets were not `stars' of separations,%
   \COMMENT{}
   nested sets of typically three oriented separations pointing towards each other. All previously known tangle-tree duality theorems, in particular those for graphs, were for tangles defined by excluding such stars.

Our first main result in this paper is to extract from the tangle structure trees established in~\cite{TSTs} a new theorem of the first type: a~tree-of-tangles theorem for tangles whose set~$\F$ of defining forbidden subsets need not form a `profile'. All known tree-of-tangles theorems so far are for profiles, in fact, for {\em robust\/} profiles. These are still a sweeping generalization of graph tangles. But we shall prove here that we can generalize them further: we shall obtain a tree-of-tangles theorem for all $\F$-tangles whose set~$\F$ of forbidden subsets of oriented separations is robust, even if not a profile.%
   \COMMENT{}

Our second main result is about $\F$-tangles whose set~$\F$ of forbidden subsets consists of stars of separations, as in all the classical tangle-tree duality theorems. We prove a general conversion theorem, which allows us to extract from our tangle structure tree for $\F$-tangles, of a separation system~$\vS$, say, an $S$-tree over~$\F$ if $S$ admits no $\F$-tangles. Such an `$S$-tree over~$\F$' is the tree structure in the most general tangle-tree duality theorems known so far for abstract separation systems (which include graphs), proved in~\cite{TangleTreeAbstract}. Our conversion theorem allows us to re-prove the main result of~\cite{TangleTreeAbstract} from the premise for the existence of tangle structure trees, which differs from the premise of the tangle-tree duality theorem in~\cite{TangleTreeAbstract}.

However we demonstrate that from our premise for the existence of tangle structure trees we can derive a weaker version of the premise in~\cite{TangleTreeAbstract}, the version used in practice when the duality theorem from~\cite{TangleTreeAbstract} is applied in~\cite{TangleTreeGraphsMatroids}. As a consequence, we shall be able in~\cite{TSTapps} to use our conversion theorem, coupled with the existence theorems for tangle structure trees in~\cite{TSTs}, to derive all the known applications of the tangle-tree duality theorem in~\cite{TangleTreeAbstract} to concrete structures, such as graphs, matroids, or data sets~\cite{TangleTreeGraphsMatroids}.

The paper is organized as follows. In \cref{sec:basics} we give a summary of tangle basics, just what is technically needed to read this paper. More on graph tangles can be found in~\cite{DiestelBook25}, more on abstract tangles and their applications in~\cite{ASS,TreeSets,TangleBook}. In \cref{sec:TSTs} we introduce tangle structure trees and summarize the main results from~\cite{TSTs}; more on this, including motivational background, can be found there.

\cref{sec:TTD} contains our conversion theorem that extracts from a tangle structure tree, of a separation system~$\vS$ that has no $\F$-tangles, an $S$-tree over~$\F$. \cref{sec:orderfunctions} is an interlude on submodular order functions for separations, which is needed in the  rest of the paper; we show how a non-injective submodular order function can be extended to an injective one without losing submodularity. The tree-of-tangles theorem for $\F$-tangles with arbitrary robust exclusion sets~$\F$ is proved in \cref{sec:ToTs}. In \cref{sec:measure} we show how to recover the essence of the tangle-tree duality theorem of~\cite{TangleTreeAbstract} from our conversion theorem and our existence theorem for tangle structure trees.

\section{Separation systems and their tangles}\label{sec:basics}

\noindent
   In this section we introduce the terminology that abstract tangle theory requires, following~\cite{ASS}.%
   \footnote{There is one difference to~\cite{ASS}: for historical reasons, the partial ordering on~$\vS$ used there is the inverse of ours. So terms like `large' and `small', infima and suprema etc, are reversed.}
    Readers familiar with~\cite{TSTs} may skip this section.

Tangles of graphs are ways of orienting their separations, each towards one of its two sides. Abstract tangles are designed to work in scenarios where there need not be anything to `separate'. In order to retain our intuition from graphs, however, we continue to refer to the things of which our abstract tangles pick one of two variants (which they will indeed do) as `separations'. These are defined by noting some key properties of graph separations and making them into axioms, as follows.

A \emph{separation system} $(\vS, \leq, ^*)$ is a set~$\vS$, whose elements we call \emph{oriented separations}, that comes with a partial ordering~$\leq$ on~$\vS$ and an order-reversing involution $^*\colon \vS \to \vS$. Thus, for any two elements%
   \footnote{We often denote the elements of~$\vS$ by letters with an arrow, in either direction, precisely in order to have a simple way to refer to their dual elements: by reversing the arrow. But the arrow directions have no meaning: an arbitrary element of~$\vS$ could be denoted equally as $\vs$ or as~$\sv$.}
   $\vr,\vs$ of~$\vS$ with $\vr \leq \vs$ we have $\vr{}^* \geq \vs{}^*$. We write $\vs{}^*=:\sv$, and call $\sv$ the \emph{inverse} of $\vs$. While we allow formally that $\vs = \sv$, in which case we call $\vs$ and~$s$ \emph{degenerate}, this does not happen often in practice.%
   \footnote{The only degenerate separation of a graph $G=(V,E)$, for example, is~$\{V,V\}$.}

If a separation system~$\vU$ happens to be a lattice, that is, if there is a supremum $\vr\lor\vs$ and an infimum $\vr\land\vs$ in~$\vU$ for every two elements $\vr,\vs\in\vU$, we call~$\vU$ a {\em universe\/} of separations. It is {\em distributive\/} if it is distributive as a lattice. A~separation system $\vS\subseteq\vU$ is {\em submodular\/} if for every two elements of~$\vS$ either their infimum or their supremum in~$\vU$ also lies in~$\vS$.
 
Very rarely we may have separations $\vs\le\sv$; then $\vs$ is {\em small\/} and $\sv$ is {\em large\/}.%
   \footnote{The small separations of a graph~$G$ are those of the form~$(V,A)$ with $A\subseteq V = V(G)$.}
   Separation systems without small elements are {\em regular\/}.%
   \COMMENT{}
   We say that $\vs$ is \emph{trivial} (and $\sv$ is \emph{co-trivial}) in $\vS$ if there exists a pair of inverse separations $\vr,\rv<\vs$ in~$\vS$. Trivial separations are clearly large, so co-trivial ones are small, but the converse need not hold. See~\cite{ASS} for more on these technicalities if desired.

The set of {\em unoriented separations} in~$(\vS,\le,^*)$ is
$$S:=\{\{\vs,\sv\}:\vs\in\vS\}.$$
We call the elements $\vs,\sv$ of~$s$ its \emph{orientations}. An \emph{orientation of $S$} is a set $\tau\subseteq\vS$ that contains exactly one orientation of every $s\in S$. An~orientation of a subset of~$S$ is a {\em partial orientation\/} of~$S$.
We write~$\tau(s)$ for the unique orientation of~$s$ contained in~$\tau$,%
   \COMMENT{}
   and say that~$s$ {\em distinguishes\/} two partial orientations~$\tau,\tau'$ of~$S$ if both are defined on~$s$ and $\tau(s)\ne\tau'(s)$.%
   \COMMENT{}

If $\vr\geq\vs$ we say that $\vr$ \emph{points towards} $s$ (and that $\rv$ \emph{points away from} $s$). We say that $\vr$ \emph{points towards} an oriented separation $\vs$ whenever it points towards $s$, i.e., if $\vr \geq \vs$ or $\vr \geq \sv$, and similarly for `points away from'. 
A \emph{star\/} is a set~$\sigma$ of non-degenerate oriented separations that point towards each other. As is easy to check, this happens if and only if $\vr\geq\sv$ (and hence $\vs\geq\rv$) for all  distinct $\vr,\vs\in\sigma$.\looseness=-1

\begin{figure}[ht]
 \center\vskip-6pt
   \includegraphics[scale=1]{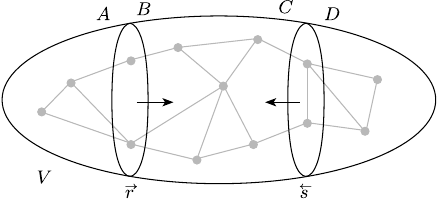}
\vskip-6pt
\caption{\small Nested separations $r = \{A,B\}$ and $s = \{C,D\}$ of a graph. Their orientations $\vr = (A,B)$ and $\sv = (D,C)$ point towards each other, since $\vr\ge \vs$ (as $B\supseteq D$) and $\sv\ge\rv$ (as $C\supseteq A$).}
\label{fig:nested}\vskip-3pt
\end{figure}

Two separations $r,s\in S$ are \emph{nested} if they have orientations that are comparable under~$\leq$; otherwise they~{\em cross\/}. Oriented separations are \emph{nested} if their underlying unoriented separations are nested. A subset of~$\vS$ is \emph{nested} if its elements are pairwise nested.

A subset of~$\vS$ is \emph{consistent} if no pair of its elements $\vr,\vs$ with $r \neq s$ point away from each other. Stars are examples of consistent nested sets of oriented~separations.

We shall often be interested in consistent orientations of~$S$. For each of its elements~$\vr\!$, a~consistent orientation of~$S$ will also contain every $\vs > \vr$ other than, possibly, $\vs=\rv$.%
   \COMMENT{}
   Consistent partial orientations of~$S$ are easily seen to extend to consistent orientations of~$S$, unless they contain a separation that is co-trivial in~$\vS$; see~\cite{ASS}.%
   \COMMENT{}

If $\sigma\subseteq\vS$ is consistent, we say that $\vs\in\vS$ is \emph{required} by $\sigma$ if $\vs \notin \sigma$ and $\sigma\cup\{\sv\}$ is inconsistent. We shall see in \cref{lem:closure} that, pathological cases aside, $\sigma \cup \{\vs \}$ will then be consistent. The \emph{closure} of $\sigma$ is
$$
\lfloor\sigma\rfloor := \sigma \cup\{\vs\in\vS:\text{\(\vs\) is required by \(\sigma\)}\}.
$$
Note that $\sigma$ requires $\vs \notin \sigma$ if and only if there exists an $\vr \in \sigma$ such that $r \neq s$ and $\vr < \vs$. Thus
 $$\lfloor \sigma \rfloor = \sigma \cup \{\, \vs \in \vS: \exists \vr \in \sigma \text{ such that } r \neq s \text{ and }\vr < \vs\}\text{,}$$
 which motivates the term of (upward) `closure'. If $\ucl(\sigma)$ is consistent, then $\lfloor \lfloor \sigma \rfloor \rfloor$ is (defined and) easily shown to be equal to~$\ucl(\sigma)$.%
   \COMMENT{}

If $\sigma$ contains no small separations, the expression above simplifies to
 $$\ucl(\sigma) = \{\,\vs\in\vS : \exists \vr\in\sigma\text{ such that }\vr\le\vs\}.$$
   Indeed, any~$\vs$ in this latter set either lies in~$\sigma$ or there is some $\vr\in\sigma$ such that $\vr < \vs$; in that case $r\ne s$, since otherwise $\sv = \vr < \vs$, making $\vr\in\sigma$  small.

\begin{lemma}[\cite{TSTs}]\label{lem:closure}
Let $\sigma\subseteq\tau\subseteq\vS$ be consistent sets. Then $\lfloor\sigma\rfloor\subseteq\lfloor\tau\rfloor$. If $\tau$ is an orientation of all of $S$, then $\ucl(\tau) = \tau$. If $\tau$ has no elements that are co-trivial in~$\vS$, then $\lfloor\tau\rfloor$ is consistent and $\ucl(\tau)\setminus\tau$ contains at most one orientation of any $s\in S$.%
   \COMMENT{}
\end{lemma}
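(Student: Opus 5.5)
We prove the three assertions of \cref{lem:closure} in turn, working throughout with the explicit description
$$\ucl(\sigma) = \sigma\cup\{\vs\in\vS:\exists\,\vr\in\sigma\text{ with } r\ne s\text{ and }\vr<\vs\}.$$

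\emph{Monotonicity.} Let $\vs\in\ucl(\sigma)$. If $\vs\in\sigma$, then $\vs\in\tau$. Otherwise there is $\vr\in\sigma\subseteq\tau$ with $r\ne s$ and $\vr<\vs$. Then either $\vs\in\tau$, or $\vs$ is required by~$\tau$. In both cases $\vs\in\ucl(\tau)$.

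\emph{Orientations of all of $S$.} Suppose $\tau$ orients all of~$S$ and $\vs\in\ucl(\tau)\setminus\tau$. Pick $\vr\in\tau$ with $r\ne s$ and $\vr<\vs$. Since $\vs\notin\tau$ and $\tau$ orients~$s$, we have $\sv\in\tau$. Now $\vr<\vs$ means that $\vr$ and $\sv$ point away from each other, with $r\ne s$. This contradicts the consistency of~$\tau$, so $\ucl(\tau)=\tau$.

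\emph{Consistency, assuming $\tau$ has no co-trivial elements.} Take distinct-underlying $\vs,\vt\in\ucl(\tau)$ (so $s\ne t$) that point away from each other, i.e.\ $\sv\ge\vt$, equivalently $\vs\le\tv$. Choose witnesses $\vr_1,\vr_2\in\tau$ with $\vr_1\le\vs$ and $\vr_2\le\vt$: take $\vr_1=\vs$ if $\vs\in\tau$, and otherwise a witness with $r_1\ne s$ and $\vr_1<\vs$; likewise for $\vr_2$. Then
$$\vr_1\le\vs\le\tv\le\rv_2,$$
so $\vr_1\le\rv_2$, and $\vr_1,\vr_2$ point away from each other.
\begin{itemize}
\item If $r_1\ne r_2$, this contradicts the consistency of~$\tau$.
\item If $r_1=r_2$ and $\vr_1=\vr_2=:\vr$, then $\vr\le\rv$. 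Combined with the strict inequalities in the chain this makes $\vr$ co-trivial or at least small, which needs care.
\end{itemize}
The second case is the main obstacle. Having $\vr$ small is not excluded by the hypothesis; only co-triviality is. So the degenerate configurations must be examined, using the non-strict and strict parts of the chain.

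Since $s\ne t$, at least one of $\vs,\vt$ differs from~$\vr$; say $\vr<\vs$ with $r\ne s$. Then $\vr<\vs\le\tv\le\rv$. I aim to exhibit a pair of inverse separations both strictly below~$\rv$, which would make $\rv$ trivial and hence $\vr$ co-trivial, a contradiction.

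\begin{itemize}
\item If $\vt\ne\vr$, so that $\vr<\vt$, then $\vt<\rv$ follows from $\vr<\vt$ via $\tv<\rv$ (apply the involution). Similarly $\vs<\rv$. This gives $\vt<\rv$ and $\tv\le\rv$. If $\tv<\rv$ holds strictly, then $\vt,\tv<\rv$ is the required inverse pair. If $\tv=\rv$, then $t=r$, and we argue analogously with~$s$, using $\vs\le\tv=\rv$ and $\sv$.
\item If $\vt=\vr$, the chain becomes $\vr<\vs\le\rv$. Then $\vs\le\rv$ gives $\vr\le\sv$. Together with $\vr<\vs$ and $r\ne s$, this shows $\vs,\sv\ge\vr$, and at least one of the inequalities is strict. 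Applying the involution yields $\sv,\vs\le\rv$ with some strictness. Upgrading both to strict inequalities (using $s\ne r$, so $\vs,\sv\ne\rv$) exhibits $\rv$ as trivial. Hence $\vr$ is co-trivial, a contradiction.
\end{itemize}

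\emph{At most one orientation of each $s$.} Suppose both $\vs$ and $\sv$ lie in $\ucl(\tau)\setminus\tau$, with witnesses $\vr_1<\vs$ and $\vr_2<\sv$. Then $\vr_1$ and $\vr_2$ are both below inverses of~$s$. By the same chain argument as in the consistency step, either $\vr_1,\vr_2$ are inconsistent in~$\tau$, or some $\vr$ is co-trivial. Both are excluded, which proves the last assertion.
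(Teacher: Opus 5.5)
Your argument is correct in substance. It is the natural direct check from the explicit description of $\lfloor\sigma\rfloor$. This paper does not prove the lemma itself but quotes it from \cite{TSTs}, so there is no in-paper proof to compare your route against. The first two assertions are fine as written. In the third part, three things need tidying.

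First, your case split for $r_1=r_2$ omits the case $\vr_2=\rv_1$. The hypotheses allow this, because a consistent set may contain both orientations of one separation. The case is harmless: the chain becomes $\vr_1\le\vs\le\tv\le\vr_1$, which forces $\vs=\tv$ and hence $s=t$, a contradiction. In the last assertion this case cannot arise at all, because there the chain $\vr_1<\vs<\rv_2$ is strict.

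Second, in the sub-case $\vt\ne\vr$ you have the orientations of $t$ swapped. Applying the involution to $\vr<\vt$ gives $\tv<\rv$, not $\vt<\rv$. The inequality $\vt<\rv$ comes instead from two facts:
\begin{itemize}
\item $\vs\le\tv$, equivalently $\vt\le\sv$;
\item $\sv<\rv$, the inverse of $\vr<\vs$.
\end{itemize}
Together these give $\vt\le\sv<\rv$. So both $\vt<\rv$ and $\tv<\rv$ hold strictly, and $\rv$ is trivial, witnessed by $t$. Your further sub-case $\tv=\rv$ is vacuous, since it would mean $\vt=\vr$.

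Third, the last assertion is not literally covered by the consistency argument, because $\vs$ and $\sv$ have the same underlying separation. It should be checked separately, which takes one line. From $\vr_1<\vs<\rv_2$, either $r_1\ne r_2$ and $\tau$ is inconsistent, or $\vr_1=\vr_2=:\vr$. In the latter case $\vs,\sv<\rv$, which makes $\vr\in\tau$ co-trivial, contrary to the hypothesis.
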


Consistent orientations of~$S$ can contain small separations. But examples are rare and can be counter-intuitive, so we often exclude them.%
   \COMMENT{}
   Co-trivial separations cannot occur in consistent orientations of~$S$. Indeed if $\sv$ is co-trivial, witnessed by~$r\in S$, then every orientation of~$S$ will have to orient~$r$ too. But it cannot do so consistently with~$\sv$, since both $\vr$ and~$\rv$ are inconsistent with~$\sv$.%
   \COMMENT{}
    Similarly, $\ucl(\{\sv\})$~is then inconsistent, since it contains $\vr$ and~$\rv$, which are both inconsistent with~$\sv$.

An \emph{order function} on $S$ is any map $S\to\mathbb{R}$. Unless otherwise mentioned, we denote such order functions as $s\mapsto |s|$. We extend them to~$\vS$ by letting $|\vs| := |\sv| := |s|$. A~separation system~$(\vS,\le,^*)$ given with an order function on~$S$ is an {\em ordered separation system\/}. For every $k\in\mathbb{R}$ we let
  $$\vSk := \{\vs\in\vS: |s|<k\};$$
this is again an ordered separation system.

We are sometimes interested in orientations of~$S$ that do not have certain subsets. We typically collect those together in some set~$\F$, whose elements we call \emph{forbidden subsets\/}. Formally, if $\F$~is any set,%
   \COMMENT{}
   we say that $\tau\subseteq\vS$ {\em avoids\/}~$\F$ if $\tau$ has no subset in~$\F$, i.e., if no subset of~$\tau$ is an element of ~$\F$.

\begin{definition}\label{def:tangle}
An $\F$\emph{-tangle} of $S$ is an $\F$-avoiding consistent orientation of~$S$. The $\F$-tangles of the subsets~$S_k$%
   \COMMENT{}
   of~$S$ are the $\F$-tangles \emph{in}~$\vS$.
\end{definition}

\noindent
   When our choice of~$\F$ is clear from the context, or arbitrary, we shall also refer to $\F$-tangles simply as {\em tangles\/}.

\medbreak

The tangles of the sets~$S_k\subseteq S$ are the {\em $k$-tangles of~$S$}, or the {\em tangles of order~$k$} in\,$\vS$. When we speak of  {\em maximal\/} tangles in~$\vS$, we refer to their partial ordering as subsets of~$\vS$. A~maximal $\F$-tangle in~$\vS$, thus, is an $\F$-tangle~$\tau$ of some~$S_k$ for which there is no $\F$-tangle $\tau'\ne\tau$ of any~$S_\ell$ such that $\tau = \tau'\cap\vSk$.

\medskip\section{Tangle structure trees}\label{sec:TSTs}

\noindent
   We adopt the graph-theoretic terminology of \cite{DiestelBook25}. Given a tree~$T$ with a root~$r$, we write $\le_r$ for the associated partial ordering on the nodes of~$T$. Maximal elements, including the root if $|T|=1$, are called \emph{leaves}. Any direct successors in~$\le_r$ of a node of~$T$ are its \emph{children}. We write $E_v$ for the set of edges of~$T$ from a node~$v$ to its children.

Let $(\vS,\le,^*)$ be a separation system, and let $\F$ be any set. A \emph{separation~tree} $(T,r,\beta)$ on~$\vS$%
   \COMMENT{}
   consists of a rooted tree $(T,r)$ and an edge labelling $\beta\colon E(T)\to\vS$ such that for every non-leaf ${v\in V(T)}$ there exists a separation $s_v\in S$ such that $\beta$ restricts to a bijection $E_v\to\{\vsv,\svv\}$ and $s_u\neq s_v$ whenever $u <_r v$.\looseness=-1

Thus, every non-leaf node $v$ of a separation tree has either one or two children.  If there is no need to refer to $r$ or $\beta$ explicitly, we usually abbreviate $(T, r, \beta)$ to~$T\!$. For every node~$v$, we write~$\beta_v$ for the set~$\beta\big(E(rTv)\big)$ of edge labels on the path~$rTv$.
A~separation tree is \emph{consistent} if $\beta_v \subseteq \vS$ is consistent for every node $v \in T$. 

A leaf $\ell$ of a consistent separation tree on~$\vS$ is an \emph{$\F$-tangle leaf} if the closure $\lfloor\beta_{\ell}\rfloor$ of $\beta_{\ell}$ is an $\F$-tangle of~$S$. A~non-leaf node~$v$ is an $\F$-{\em tangle node\/} if there is an $\F$-tangle leaf~$\ell\ge_r w$ for every (one or two) successor~$w$ of~$v$ in~$<_r$. Tangle nodes with two children, thus, are precisely the $<_r$-infima of pairs of distinct tangle leaves.\looseness=-1

A~leaf $\ell$ is \emph{forbidden} (by~$\F$) if $\beta_{\ell}$ contains an element of~$\F$ as a subset. Note that tangle leaves are never forbidden. An \emph{$\F$-tangle structure tree of $\vS$} is a consistent separation tree on~$\vS$ in which every leaf is either a tangle leaf or forbidden, and for every non-leaf node~$v$ the set $\beta_v$ has no subset in~$\F$.

\begin{theorem}[{\cite[Theorem~3.7]{TSTs}}]\label{thm:display}
    Let $\vS$ be a separation system, let $\F$ be any set, and let $T$ be an $\F$-tangle structure tree of $\vS$. Then for every $\F$-tangle $\tau$ of $S$ there is a unique leaf $\ell =: \ell(\tau)$ of~$T$ with $\lfloor \beta_{\ell} \rfloor = \tau$. In particular, if all the leaves of $T$ are forbidden, then $S$ has no $\F$-tangle.
\end{theorem}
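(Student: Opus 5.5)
Given an $\F$-tangle $\tau$ of~$S$, I will find the leaf $\ell(\tau)$ by walking down $T$ from the root, letting $\tau$ choose the branch at each node. Start at $v_0=r$. At a non-leaf node~$v$ with separation~$s_v$, the labels on $E_v$ are $\vsv$ and~$\svv$, or just one of them. Since $\tau$ orients~$s_v$, I would move to the child along the edge labelled $\tau(s_v)$ if there is one. To make the walk well defined I carry the invariant $\beta_v\subseteq\tau$, which holds trivially at the root. It is preserved at every step. Because the $s_u$ along a root path are distinct, the path is finite and $T$ is locally finite along it, so the walk ends at some leaf~$\ell$ with $\beta_\ell\subseteq\tau$. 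If $T$ is infinite, I would note that separation trees are implicitly assumed to have finite rooted paths, or handle the general case with a ray argument. The one case needing care is a node~$v$ with only one child, whose edge is labelled~$\vs$ where $\tau(s_v)=\sv$. I must show this cannot occur for the walk. At that node $\beta_v\subseteq\tau$ is not forbidden. If the only-child configuration can arise in an $\F$-tangle structure tree, I would argue that it arises only when $\beta_v\cup\{\sv\}$ is inconsistent. Then $\tau\supseteq\beta_v\cup\{\sv\}$ would be inconsistent, contradicting that $\tau$ is a tangle. I expect this to follow from how single-child nodes are used, namely when one orientation is already required by~$\beta_v$. If the definition does not force it, I would have to use that the leaves below are tangle or forbidden leaves. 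I regard this as the main obstacle.

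Next I show $\lfloor\beta_\ell\rfloor=\tau$. Since $\beta_\ell\subseteq\tau$ and $\tau$ is $\F$-avoiding, $\ell$ is not forbidden, so it is an $\F$-tangle leaf and $\lfloor\beta_\ell\rfloor$ is an $\F$-tangle, in particular an orientation of~$S$. By \cref{lem:closure} applied to $\beta_\ell\subseteq\tau$, we get $\lfloor\beta_\ell\rfloor\subseteq\lfloor\tau\rfloor=\tau$. Two orientations of~$S$ with one contained in the other are equal, so $\lfloor\beta_\ell\rfloor=\tau$.

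For uniqueness, suppose $\ell'\ne\ell$ is another leaf with $\lfloor\beta_{\ell'}\rfloor=\tau$. Let $v$ be the $\le_r$-infimum of $\ell$ and~$\ell'$. Then $v$ is a non-leaf node whose two child edges, labelled $\vsv$ and $\svv$, lie on $rT\ell$ and $rT\ell'$ respectively. So $\vsv\in\beta_\ell\subseteq\tau$ and $\svv\in\beta_{\ell'}\subseteq\lfloor\beta_{\ell'}\rfloor=\tau$. This contradicts that $\tau$ contains only one orientation of~$s_v$, as $s_v$ is non-degenerate. Here I use that $s_v$ is non-degenerate, since $v$ then has two children. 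Finally, if every leaf is forbidden, then no leaf is a tangle leaf, so no $\F$-tangle $\tau$ can exist.
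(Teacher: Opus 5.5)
Your closure argument giving $\lfloor\beta_\ell\rfloor=\tau$, your uniqueness argument and the final clause are all correct. Walking down from the root along~$\tau$ is also the right route: it is the fact quoted in \cref{sec:TSTs} that every orientation of~$S$ contains $\beta_\ell$ for a unique leaf~$\ell$. The gap is the step you flag yourself. You do not show that the walk can always continue along an edge labelled $\tau(s_v)$, and neither remedy you suggest can work. Nothing in the definition of an $\F$-tangle structure tree makes $\beta_v\cup\{\sv\}$ inconsistent at a single-child node. Nor can the types of the leaves below~$v$ help, because if a node with non-degenerate~$s_v$ were allowed a single child, the theorem would be false. For example, take $S=\{s\}$ with $s$ non-degenerate and $\F=\emptyset$, and let $T$ consist of its root~$r$ and one child joined by an edge labelled~$\vs$. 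Then $T$ is consistent, $\beta_r=\emptyset$ has no subset in~$\F$, and the only leaf is an $\F$-tangle leaf, since $\lfloor\{\vs\}\rfloor=\{\vs\}$ is an $\F$-tangle of~$S$. Yet the $\F$-tangle $\{\sv\}$ has no leaf~$\ell$ with $\lfloor\beta_\ell\rfloor=\{\sv\}$.

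What rules this out is the definition of a separation tree itself: $\beta$ restricts to a \emph{bijection} $E_v\to\{\vsv,\svv\}$. Hence $v$ has only one child if and only if $\{\vsv,\svv\}$ is a singleton, i.e.\ $s_v$ is degenerate, and then the unique label $\vsv=\svv$ is $\tau(s_v)$. If $s_v$ is non-degenerate, both orientations occur as labels. Either way $E_v$ contains an edge labelled $\tau(s_v)$, which preserves your invariant $\beta_v\subseteq\tau$. You already use one half of this bijection in your uniqueness step (two children force $s_v$ to be non-degenerate); the half you need here is the converse. With it inserted, your proof is complete.
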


\noindent
   $\F$-tangle structure trees will be our main tool, so let us see when they exist.

\medbreak

Essentially, two conditions on~$\F$ are needed to ensure that $\vS$ has an $\F$-tangle structure tree. The first is that $\{\sv\}\in\F$ for every $\vs \in \vS$ that is trivial in~$\vS$.  This holds for all relevant~$\F$ (see~\cite{TSTs} for why), and if it does we call ~$\F$ \emph{standard for~$\vS$}.

The second condition is the one that bites. If $T$ is any separation tree on~$\vS$,%
   \COMMENT{}
   then every orientation~$\tau$ of~$S$ contains a set~$\beta_\ell$ for a unique leaf~$\ell$ of~$T$~\cite[Lemma~3.2]{TSTs}. If~$\tau$ is consistent and $\ucl(\beta_\ell)$ orients all of~$S$, then $\ucl(\beta_\ell) = \tau$ by~\cref{lem:closure}.%
   \COMMENT{}
   Hence if $\tau$ is not an $\F$-tangle, then $\ucl(\beta_\ell)$ has a subset~$\sigma$ in~$\F$ that witnesses this.

If $T$ is even an $\F$-tangle structure tree, then $\ell$ is a forbidden leaf, and so we can find such a subset $\sigma\in\F$ of~$\tau$ not just in~$\ucl(\beta_\ell)$ but even in~$\beta_\ell$: among the edge labels of~$T$. In order for this to be possible, we therefore need to make some `richness' assumption about~$\F$: an assumption which, in our example, ensures that $\F$ has enough elements to contain a subset also of~$\beta_\ell$ as soon as it contains a subset of~$\ucl(\beta_\ell)$.\looseness=-1

The main contribution of~\cite{TSTs} was to identify such a richness condition on~$\F$ that is, essentially, both necessary and sufficient for $\F$-tangle structure trees to exist.

To motivate it, let us first state a stronger condition that is less technical, and which clearly implies that $\F$ contains a subset of~$\beta_\ell$ whenever it contains a subset of~$\ucl(\beta_\ell)$. This is that $\F$ is {\em closed under minimization\/} in every consistent orientation~$\tau$ of~$S$: that it contains every set~$\sigma'\subseteq\tau$ obtained from some $\sigma\subseteq\tau$ in~$\F$ by replacing every $\vs\in\sigma$ with some ${\vsdash\le\vs}$ from~$\tau$. Our richness notion in \cref{def:rich} will be weaker than this (\cite[Lem\-ma~4.4]{TSTs}), but still strong enough to ensure the existence of $\F$-tangle structure trees.

A~separation tree $T$ on an ordered separation system~$\vS$ is \emph{ordered} if ${|s_v| \leq |s_w|}$ whenever $v$ and $w$ are non-leaves of $T$ with $v \leq w$. It is \emph{thoroughly ordered} (in~$\vS$) if, for every non-leaf node~$v$, the separation~$s_v$ is not oriented by~$\lfloor \beta_v \rfloor$ (i.e., $\lfloor \beta_v \rfloor$ contains neither~$\vsv$ nor~$\svv$) and has minimum order among the separations not oriented by~$\lfloor \beta_v \rfloor$. Every thoroughly ordered separation tree is ordered~\cite[Lemma~4.1]{TSTs}.

We say that $\vs \in \vS$ is \emph{weakly eclipsed} by $\vr \in \vS$ if $\vr < \vs$ and $|r| \leq |s|$, and \emph{eclipsed} by $\vr$ if $\vr < \vs$ and $|r| < |s|$. Given any set $\tau \subseteq \vS$, a subset $\sigma \subseteq \tau$ is \emph{efficient} (in~$\tau$) if no element of~$\sigma$ is eclipsed by any other element of~$\tau$. It is \emph{strongly efficient} if no element of~$\sigma$ is weakly eclipsed by any other element of~$\tau$. Our separation tree~$T$ is \emph{efficient} if for every leaf~$\ell$ the set~$\beta_{\ell}$ is efficient in~$\lfloor \beta_\ell \rfloor$. Thoroughly ordered separation trees are efficient~\cite[Lemma~4.9]{TSTs}.

\begin{definition}\label{def:rich}
A set $\F$ is \emph{rich for~$\vS$} if every consistent orientation of $S$ that has a subset in~$\F$ also has a strongly efficient\footnote{in this orientation of~$S$} subset in~$\F$.%
   \COMMENT{}
\end{definition}

We have the following two theorems for the existence of $\F$-tangle structure trees.

\begin{theorem}[{\cite[Theorems~4.6,\,4.8]{TSTs}}]\label{thm:generalduality}
Let $\vS$ be an ordered separation system, and let $\F$ be any set that is rich and standard for~$\vS$.  Then there exists a thoroughly ordered $\F\!$-tangle structure tree of~$\vS$. It is unique if the order function on~$S$ is injective.\looseness=-1
\end{theorem}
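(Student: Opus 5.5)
We construct the tree greedily from the root, in the only way compatible with being thoroughly ordered. Start with the root $r$ and $\beta_r=\emptyset$. At a node $v$ already built with consistent $\beta_v$, stop and make $v$ a leaf if either $\beta_v$ has a subset in~$\F$ (a forbidden leaf) or $\ucl(\beta_v)$ orients all of~$S$. Otherwise, pick $s_v$ of minimum order among the separations not oriented by $\ucl(\beta_v)$. Then give $v$ a child along $\vsv$ and along $\svv$ whenever $\beta_v\cup\{\vsv\}$, respectively $\beta_v\cup\{\svv\}$, is consistent and contains no co-trivial element. We first check that this is a separation tree: $s_u\ne s_v$ for $u<_r v$, because $s_u$ is oriented by $\beta_v\subseteq\ucl(\beta_v)$ while $s_v$ is not. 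It is consistent by construction. It is thoroughly ordered by the choice of~$s_v$. The process terminates because each step orients a new separation, so branches are finite when $S$ is finite. For infinite $S$ we would need to argue as in~\cite{TSTs}, which I take as given.

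The substantive points are as follows.

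(a) At least one child exists at each non-leaf. By \cref{lem:closure}, if $\beta_v$ has no co-trivial element then $\ucl(\beta_v)$ is consistent and extends to a consistent orientation, which picks one orientation of $s_v$ consistent with~$\beta_v$. Standardness is used here: if adding $\svv$ creates a co-trivial $\svv$, then $\{\svv\}\ldots$ more precisely, any $\beta$ containing a co-trivial $\sv$ contains $\{\sv\}\in\F$, so it would already be a forbidden leaf. Hence every branch either continues or ends in a forbidden leaf.

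(b) Every non-forbidden leaf $\ell$ is a tangle leaf. Since $\ucl(\beta_\ell)$ orients all of~$S$ and is consistent, it is a consistent orientation~$\tau$. Suppose $\tau$ has a subset in~$\F$. Richness gives a strongly efficient subset $\sigma\in\F$ of~$\tau$. We must show $\sigma\subseteq\beta_\ell$, which would make $\ell$ forbidden, a contradiction. Take $\vs\in\sigma\setminus\beta_\ell$. Then $\vs$ is required by $\beta_\ell$, so some $\vr\in\beta_\ell$ has $\vr<\vs$. Now $\vr=\beta(e)$ for an edge at some node $u$ with $r=s_u$. At $u$, the separation $s$ was not yet oriented by $\ucl(\beta_u)$; otherwise $\ucl(\beta_u)$ would contain $\vs$ or $\sv$, and $\sv$ is impossible by consistency with~$\tau$. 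If $\vs\in\ucl(\beta_u)$, we descend further to an earlier witness and recurse on the earliest such node. So by minimality of $|s_u|$ we get $|r|\le|s|$, meaning $\vs$ is weakly eclipsed by $\vr\in\tau$, contradicting strong efficiency. The recursion is handled by choosing $u$ minimal such that $\vs\in\ucl(\beta_{u'})$ for the child $u'$ of $u$ on the path. This step is the main obstacle, and I expect to need the closure identities $\ucl(\ucl(\sigma))=\ucl(\sigma)$ and the fact that required elements come from a single $\vr<\vs$ with $r\ne s$.

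(c) For uniqueness under an injective order function, we argue by induction on the depth that two thoroughly ordered $\F$-tangle structure trees coincide. At a node $v$ with the same $\beta_v$ in both trees, $s_v$ is forced, since it is the unique minimum-order separation unoriented by $\ucl(\beta_v)$. Whether $v$ is a leaf is also forced. If $\beta_v$ has a subset in~$\F$, then $v$ must be a leaf, because non-leaves have none. If $\ucl(\beta_v)$ orients $S$, no $s_v$ exists. Conversely, a leaf that is not forbidden must be a tangle leaf, so its closure orients~$S$. Finally, the children are forced by consistency together with the argument in~(a).
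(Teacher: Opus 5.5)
The gap is in your construction: as written it does not produce a separation tree. Part (b) is fine once you take the minimal node, and it carries the real content of the existence proof. Let $u'$ be the first node on $rT\ell$ with $\vs\in\lfloor\beta_{u'}\rfloor$, and $u$ its parent. Then $\vs$ is required by $\beta_{u'}$ but not by $\beta_u$, so its witness $\vr<\vs$ with $r\ne s$ must be $\beta(uu')$, an orientation of $s_u$. Since $s$ is not oriented by $\lfloor\beta_u\rfloor\subseteq\tau$, thorough ordering gives $|s_u|\le|s|$. So $\beta(uu')\in\tau$ weakly eclipses $\vs$. Hence every strongly efficient subset of $\tau=\lfloor\beta_\ell\rfloor$ lies in $\beta_\ell$, and richness finishes. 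Your leaf criterion in (c) is also right.

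The construction misreads the definition of a separation tree. That definition requires $\beta$ to restrict to a \emph{bijection} $E_v\to\{\vsv,\svv\}$, so a non-leaf has a single child only when $s_v$ is degenerate. You may not drop the child along a co-trivial orientation of $s_v$, and ``at least one child exists'' in (a) is the wrong target. Whenever the chosen $s_v$ has a co-trivial orientation, which the hypotheses do not exclude, your procedure outputs a non-leaf with non-degenerate $s_v$ and only one child.

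The repair is to always create both children.
\begin{itemize}
\item Both are automatically consistent. By the definition of ``required'', if $\beta_v\cup\{\vsv\}$ were inconsistent then $\svv$ would lie in $\lfloor\beta_v\rfloor$, contrary to the choice of $s_v$.
\item If $\vsv$ is co-trivial, then $\{\vsv\}\in\F$ because $\F$ is standard. Your stopping rule therefore makes that child a forbidden leaf.
\end{itemize}
This is where standardness is genuinely needed. It is also what ensures that a non-forbidden leaf $\ell$ has no co-trivial elements in $\beta_\ell$, which (b) needs in order to get $\lfloor\beta_\ell\rfloor$ consistent from \cref{lem:closure}.

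Under your reading, the uniqueness claim would in fact fail: the tree that keeps the co-trivial child as a forbidden leaf would qualify as well. With the correct definition, the children of $v$ are forced by the bijection requirement alone, and the rest of (c) goes through as you wrote it.
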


\noindent
   Note that the uniqueness part of \cref{thm:generalduality} does not require the existence part: an injectively ordered separation system clearly has at most one thoroughly ordered $\F$-tangle structure tree, regardless of what~$\F$ is.%
   \COMMENT{}
   In particular, $\F$~need not be standard or rich. But we know from \cite[Theorem~4.8]{TSTs} that~$\F$, assuming it is standard, will be rich for~$\vS$ if the order function on~$S$ is injective and an $\F$-tangle structure tree exists.\looseness=-1

\medbreak

Thoroughly ordered structure trees can be large. But there is a way to reduce them. Given an $\F$-tangle structure tree~$(T,r,\beta)$, consider an edge~$vw$ of~$T$ such that $w$ is a child of~$v$. We call $vw$ \emph{necessary\/} for a tangle leaf $\ell\ge w$ of~$T$ if $\beta(vw)$ is a $\le$-minimal element of the tangle~$\ucl(\beta_\ell)$, or equivalently, of~$\beta_\ell$. We call~$vw$ \emph{necessary\/} for a forbidden leaf~$\ell\ge w$ if every subset of~$\beta_\ell$ in~$\F$ contains~$\beta(vw)$. A~node~$v$ of~$T$ is \emph{necessary in~$T$} if for every child~$w$ of~$v$ there exists a leaf $\ell \geq w$ such that $vw$ is necessary for~$\ell$. If every node of~$T$ is necessary, then $T$ is \emph{irreducible}.

It is easy to make a tangle structure tree irreducible by contracting edges at unnecessary nodes and deleting certain subtrees~\cite[Section~5]{TSTs}. This preserves efficiency%
   \COMMENT{}
   and being ordered, but not being thoroughly ordered. Our second existence theorem for $\F$-tangle structure trees gains efficiency and irreducibility over that from \cref{thm:generalduality} at the expense of losing the thoroughness of its ordering:

\begin{theorem}[{\cite[Theorem~6.1]{TSTs}}]\label{thm:effandirreducible}
    Let $\vS$ be an ordered separation system, and let $\F$ be any set that is rich and standard for $\vS$.  Then $\vS$ has an efficient and irreducible ordered $\F$-tangle structure tree. 
\end{theorem}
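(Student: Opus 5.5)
Start from the thoroughly ordered $\F$-tangle structure tree $T_0$ provided by \cref{thm:generalduality}. It is efficient by \cite[Lemma~4.9]{TSTs} and ordered by \cite[Lemma~4.1]{TSTs}. Then apply the reduction procedure of \cite[Section~5]{TSTs} to $T_0$, namely contracting edges at unnecessary nodes and deleting certain subtrees, until every node is necessary. Each reduction step must produce a tree that is still an $\F$-tangle structure tree, is still ordered and efficient, and has strictly fewer nodes. If $T_0$ is finite, the process then terminates, and its final tree is the one claimed.

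A single step works as follows. Let $v$ be an unnecessary node. Then some child $w$ of $v$ has no leaf $\ell\ge w$ for which $vw$ is necessary. Remove the edge $vw$ by identifying $v$ with $w$ and keeping the labels of the edges below $w$. If $v$ had a second child $w'$, delete the subtree rooted at $w'$. For every leaf $\ell\ge w$ this removes $\beta(vw)$ from $\beta_\ell$, and we must check that $\ell$ keeps its type.

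\emph{Forbidden leaves.} Since $vw$ is not necessary for $\ell$, some subset of $\beta_\ell$ lying in $\F$ avoids $\beta(vw)$. That subset survives in the new $\beta_\ell$, so $\ell$ remains forbidden.

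\emph{Tangle leaves.} Here $\beta(vw)$ is not minimal in $\beta_\ell$, so it lies above some other element of $\beta_\ell$ (whose underlying separation is distinct, since labels along a path are distinct separations). Hence $\beta(vw)$ is still required by the smaller set, and the closure $\lfloor\beta_\ell\rfloor$ is unchanged by \cref{lem:closure}.

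\emph{Remaining properties.} Consistency is inherited because the new label sets are subsets of the old ones. Non-leaf nodes still have $\F$-free label sets for the same reason. Orderedness is preserved because we only delete edges from root-to-leaf chains. Efficiency is preserved because the new $\beta_\ell$ is a subset of the old one with the same closure for tangle leaves, and efficiency passes to subsets. For forbidden leaves the closure can only shrink, by \cref{lem:closure}, so fewer elements are available to eclipse anything and efficiency again survives.

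The main obstacle is deleting the sibling subtree at $w'$. That subtree may contain tangle leaves, and \cref{thm:display} requires every $\F$-tangle to be displayed. Since $v$ is unnecessary, I would first choose $w$ so that the deleted branch carries no necessary certificate. More precisely, I would argue that every tangle below $w'$ equals a tangle below $w$ after the label $s_v$ is dropped: its closure is determined by labels other than $\beta(vw')$. This would contradict the uniqueness in \cref{thm:display} unless $w'$ has no tangle leaves, or unless identifying $v$ with $w$ still yields a valid separation tree. Where the argument in \cite[Section~5]{TSTs} is delicate, I would instead induct on $|T|$, choosing at each stage a $\le_r$-maximal unnecessary node. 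This keeps every modification local, so the above checks suffice. Finiteness of $T_0$ holds whenever $S$ is finite, and I would assume this implicitly, as the cited theorems do.
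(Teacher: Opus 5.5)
Your proposal is correct and follows the route the paper indicates. You start from the thoroughly ordered tree of \cref{thm:generalduality}, which is ordered and efficient by \cite[Lemmas~4.1 and~4.9]{TSTs}, and reduce it as in \cite[Section~5]{TSTs} by contracting an edge $vw$ at an unnecessary node and deleting the sibling branch; your leaf-by-leaf checks show that each step yields again an ordered, efficient $\F$-tangle structure tree with fewer nodes.

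The `main obstacle' in your last paragraph is not one: displaying every tangle is not part of the definition of an $\F$-tangle structure tree but a consequence of it (\cref{thm:display}), and in fact applying \cref{thm:display} to the reduced tree, together with uniqueness in the old tree, shows that the deleted branch at $w'$ contained no tangle leaves at all. So your checks already suffice, and no fallback to $\le_r$-maximal unnecessary nodes is needed.
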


\noindent
   See~\cite[Theorem~6.2]{TSTs} for a long list of properties of these $\F$-tangle structure trees that make them useful for tangle analysis, both in theory and in applications.

\medskip\section{Tangle-tree duality}\label{sec:TTD}

\noindent
   If we apply \cref{thm:effandirreducible} to a separation system~$\vS$ that has no $\F$-tangles, it tells us that there is an $\F$-tangle structure tree all whose leaves are forbidden. Let us call such structure trees {\em $\F$-trees\/}. Conversely, if there is an $\F$-tree of~$\vS$ then $S$ has no $\F$-tangle, by \cref{thm:display}.

\cref{thm:effandirreducible} thus implies%
   \COMMENT{}
   the following dichotomy:

\begin{theorem}[{\cite[Theorem~6.4]{TSTs}}]\label{thm:generalduality2}
Let $\vS$ be an ordered separation system, and let~$\F$ be standard and rich for $\vS$. Then exactly one of the following assertions holds:\looseness=-1
\begin{enumerate}\itemsep2pt\vskip2pt
  \item\label{item:gd1} there exists an $\F$-tangle of $S$;
  \item\label{item:gd2} there exists an $\F$-tree of $\vS$.
\end{enumerate}
In the case of {\rm\labelcref{item:gd2}}, the $\F$-tree can be chosen to be ordered, irreducible and efficient.
\end{theorem}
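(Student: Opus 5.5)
The dichotomy splits into two claims: that \labelcref{item:gd1} and \labelcref{item:gd2} cannot both hold, and that at least one of them holds, with the stated extra properties in case~\labelcref{item:gd2}. Both follow from results already stated, so the plan is to assemble them.

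For exclusivity, suppose $T$ is an $\F$-tree of~$\vS$. It is an $\F$-tangle structure tree all of whose leaves are forbidden. By \cref{thm:display}, any $\F$-tangle~$\tau$ of~$S$ would satisfy $\lfloor\beta_{\ell(\tau)}\rfloor=\tau$ for some leaf $\ell(\tau)$. That leaf would then be a tangle leaf, and tangle leaves are never forbidden, since $\tau$ avoids~$\F$ and contains $\beta_\ell$. This contradicts the assumption that all leaves of $T$ are forbidden. So \labelcref{item:gd1} and \labelcref{item:gd2} do not hold simultaneously. This is exactly the ``in particular'' clause of \cref{thm:display}.

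For existence, assume \labelcref{item:gd1} fails, so $S$ has no $\F$-tangle. Since $\F$ is rich and standard for~$\vS$, \cref{thm:effandirreducible} provides an efficient, irreducible, ordered $\F$-tangle structure tree~$T$ of~$\vS$. By definition every leaf of~$T$ is either a tangle leaf or forbidden. A tangle leaf~$\ell$ would make $\lfloor\beta_\ell\rfloor$ an $\F$-tangle of~$S$, which does not exist. Hence every leaf is forbidden, and $T$ is an $\F$-tree with the three desired properties, proving \labelcref{item:gd2} together with its supplement.

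There is no genuine obstacle here. The one point to check is that ``tangle leaf'' is defined through $\lfloor\beta_\ell\rfloor$ being an $\F$-tangle of all of~$S$, so that the absence of $\F$-tangles of~$S$ really rules out every tangle leaf.
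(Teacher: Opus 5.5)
Your proof is correct and is essentially the argument the paper gives. Exclusivity is the ``in particular'' clause of \cref{thm:display}. When $S$ has no $\F$-tangle, the efficient, irreducible, ordered $\F$-tangle structure tree from \cref{thm:effandirreducible} can have no tangle leaves, so all its leaves are forbidden and it is an $\F$-tree.
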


\noindent
   Note that, unlike in all known theorems of this type, the set~$\F$ of forbidden subsets need not consist of stars of separations (see \cref{sec:basics}).

\medbreak

In the special case that $\F$ does consist of stars of separations, we shall see in \cref{cor:nested} that~-- as long as the $\F$-tree in~\ref{item:gd2} is irreducible, which the theorem says we may require~-- the separations in $\beta(E(T))\subseteq\vS$ will be nested. In contexts where separation systems describe genuine separations, of a set or of a graph, say, nested sets of separations are known to cut up the structure they separate in a tree-like way~-- for example, by a tree-decomposition of the graph. In this way, $\F$-trees impose a tree structure on such a set or graph, which in turn implies that its separations we are considering have no $\F$-tangle.

This phenomenon is one of the corner stones of classical tangle theory, known as {\em tangle-tree duality\/}. Our aim now is to derive such a dichotomy theorem also in our context, as an application of our more comprehensive tangle structure trees. Our version, \cref{cor:E} below, will not imply the classical tangle-tree duality theorem for abstract separation systems~\cite{TangleTreeAbstract}, because our richness assumptions made for~$\F$ are different. But it will offer the same dichotomy. And we shall see in~\cite{TSTapps} that it implies all known instances of the classical theorem for concrete structures, such as graphs or matroids.

\medbreak

To carry this out formally, let us introduce the terms in which classical tangle-tree duality is cast. Let $\vE(T)$ denote the set of orientations of the edges of a tree~$T$. For an oriented edge~$\ve= (t,t')$ of~$T$ we call~$t =:i(\ve)$ its \emph{initial node} and $t' =: t(\ve)$ its \emph{terminal node}, and we denote its inverse~$(t',t)$ by~$\ev$. We think of $t(\ve)$ as the node that $\ve$ \emph{points towards}. There is a natural partial ordering~$\le$ on $\vE(T)$ in which $\ve > \vf$ if $f\ne e$ and the unique path in~$T$ from $e$ to~$f$ starts at~$t(\ve)$ and ends at~$i(\vf)$.%
   \COMMENT{}
   With the involution~$^*\colon\ve\mapsto\ev$, this makes $(\vE(T),\le,^*)$ into a separation system.

Since every two edges $e,f$ of~$T$ have comparable orientations, the elements of~$\vE(T)$ are all nested. And for every node~$t$ of~$T$, the set
\[
\vF\!_t:=\{\,\ve\in\vE(T): t(\ve)=t\,\}
\]
is a star in this separation system.

Now let $(\vS,\le,^*)$ be any other separation system~-- typically, but not necessarily, consisting of separations of a set or graph. An \emph{$S$-tree} $(T,\alpha)$ is a tree $T$ together with a map $\alpha\colon \vE(T)\to\vS$ that satisfies $\alpha(\ev)=\alpha(\ve)^*$ for every $\ve\in\vE(T)$. It is an $S$-tree \emph{over\/} a set~$\F$ if $\alpha(\vF\!_t)\in\F$ for every $t\in V(T)$.%
   \COMMENT{}

If $\alpha$ preserves the partial ordering~$\le$ on~$\vE(T)$, i.e.\ if $\ve\le\vf$ implies $\alpha(\ve)\le\alpha(\vf)$,%
   \COMMENT{}
   and the image of~$\alpha$ contains no degenerate separations, then the sets $\alpha(\vF\!_t)\in\F$ will be stars in~$\vS$, because the~$\vF\!_t$ are stars in~$\vE(T)$.%
   \COMMENT{}
   Conversely:

\begin{lemma}[\cite{TreeSets}]\label{lem:starsimpliesnested}%
   \COMMENT{}
Let $(T,\alpha)$ be an $S$-tree over a set stars.%
   \COMMENT{}
   If $\alpha$ is injective on~$\vF\!_t$ for every node~$t$ of~$T$, then $\ve\le\vf$ implies $\alpha(\ve)\le\alpha(\vf)$ for all $\ve,\vf\in\vE(T)$. In particular, $\alpha(\vE(T))$ is a nested subset of~$\vS$.

Conversely, for every nested, regular, finite separation system~$(\vS,\le,^*)$ there is an $S$-tree $(T,\alpha)$ over stars such that $\alpha(\vE(T)) = \vS$ and $\alpha$ is injective on the stars~$\vF\!_t$.
\end{lemma}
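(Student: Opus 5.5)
The plan is to derive the first part from the star property one edge at a time and then use transitivity along paths in~$T$. The second part needs an explicit construction of a tree whose nodes are the consistent orientations of~$S$.

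\emph{First part.} Let $\ve\le\vf$ in $\vE(T)$. If $e=f$ then $\ve=\vf$, since $\ve<\ev$ would make the path condition absurd, and there is nothing to show. Otherwise the path from $e$ to~$f$ starts at $t(\ve)$ and ends at $i(\vf)$. So there is a sequence of oriented edges $\ve=\ve_0,\ve_1,\dots,\ve_n=\vf$ with $t(\ve_{i})=i(\ve_{i+1})=:t_i$ for every~$i$. It therefore suffices to show $\alpha(\ve_i)\le\alpha(\ve_{i+1})$, because $\le$ is transitive.

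Both $\ve_i$ and $\ev_{i+1}$ point towards~$t_i$, so they lie in $\vF_{t_i}$, and they are distinct because $e_i\ne e_{i+1}$. By injectivity of $\alpha$ on $\vF_{t_i}$, their images are distinct elements of the star $\alpha(\vF_{t_i})\in\F$. The characterization of stars in \cref{sec:basics} ($\vr\ge\sv$ for distinct $\vr,\vs$ in a star), applied with $\vr=\alpha(\ev_{i+1})=\alpha(\ve_{i+1})^*$ and $\vs=\alpha(\ve_i)$, gives $\alpha(\ve_{i+1})^*\ge\alpha(\ve_i)^*$. Since $^*$ is an order-reversing involution, this yields $\alpha(\ve_i)\le\alpha(\ve_{i+1})$.

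Nestedness of $\alpha(\vE(T))$ then follows at once. Any two edges $e,f$ of~$T$ have comparable orientations, and $\alpha$ preserves~$\le$, so their images have comparable orientations. Here we use $\alpha(\ev)=\alpha(\ve)^*$ to see that these images are indeed orientations of the separations in the image.

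\emph{Second part.} Let $\vS$ be nested, regular and finite. As nodes of~$T$ I take the consistent orientations of~$S$. Two orientations $O,O'$ are joined by an edge if they differ on exactly one $s\in S$. For an edge $e=OO'$ with $O'(s)=\vs$, I set $\alpha((O,O'))=\vs$, so the edge is labelled by the orientation lying in its terminal node; this respects~$^*$. Regularity rules out small, and hence degenerate, separations, so the images are non-degenerate.

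The main steps, in order, are the following.
\begin{enumerate}
\item For every $\vs\in\vS$ that is $\le$-maximal in a consistent orientation~$O$, the set $(O\setminus\{\vs\})\cup\{\sv\}$ is again consistent. This uses nestedness and regularity: any $\vr\in O$ with $r\ne s$ points towards~$s$, and maximality of~$\vs$ then forces consistency with~$\sv$.
\item Conversely, if flipping some $\vs\in O$ preserves consistency, then $\vs$ is maximal in~$O$. Hence $\vF_O$ maps bijectively onto the set of maximal elements of~$O$, and this set is a star by nestedness, consistency and maximality.
\item $T$ is connected. Starting from any orientation, we repeatedly flip maximal elements towards a target orientation, each flip reducing the number of disagreements.
\item Each $s\in S$ labels exactly one edge, and $T$ is acyclic.
\end{enumerate}

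I expect step~(iv) to be the main obstacle. To prove it, I will show that, for a fixed~$s$, the orientations containing~$\vs$ and those containing~$\sv$ each span connected subgraphs, using the flipping argument of step~(iii) restricted to separations other than~$s$. The edges labelled~$s$ are exactly the edges between these two sets. Uniqueness of such an edge should follow because there is exactly one consistent orientation containing~$\vs$ as a maximal element whose flip is consistent: every other separation is forced by nestedness to point towards~$s$. Once each edge is a bridge, $T$ is a tree with $\alpha(\vE(T))=\vS$ and with $\alpha$ injective on every~$\vF_O$. A finite edge count together with connectivity would also suffice in place of showing each edge is a bridge. The degenerate case $S=\emptyset$ gives the one-node tree.
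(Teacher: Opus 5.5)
Your strategy is the standard one (the paper quotes this lemma without proof): compare consecutive edges of an oriented path inside a common star $\vF\!_t$, and for the converse build the tree on the consistent orientations, with two orientations adjacent when they differ on exactly one separation. However, you are using the inverse of this paper's ordering (the one mentioned in the footnote in \cref{sec:basics}) while quoting this paper's characterization of stars. As a result, several steps are false as written. In the first part, the condition that the path from $e$ to $f$ starts at $t(\ve)$ and ends at $i(\vf)$ is the paper's definition of $\ve>\vf$, not of $\ve\le\vf$. Your last inference is also invalid: applying the order-reversing involution to $\alpha(\ve_{i+1})^*\ge\alpha(\ve_i)^*$ gives $\alpha(\ve_{i+1})\le\alpha(\ve_i)$, not the reverse. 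The two errors cancel, so the fix is simply to state it correctly. The star at $t_i$ gives $\alpha(\ve_i)\ge\alpha(\ev_{i+1})^*=\alpha(\ve_{i+1})$, so labels decrease along an oriented path. In the paper's ordering this is exactly $\ve\ge\vf\Rightarrow\alpha(\ve)\ge\alpha(\vf)$; compare the acyclicity step in the proof of \cref{thm:main}.

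In the second part the same mix-up makes (i) and (ii) false. In this paper's ordering, consistent orientations are up-closed. Flipping $\vs\in O$ creates an inconsistent pair exactly when some $\vr\in O$ with $r\ne s$ has $\vr<\vs$, since that is when $\vr$ and $\sv$ point away from each other. For example, take $\vE(P)$ for the path $P=abc$. The consistent orientation $\{(a,b),(b,c)\}$ has maximal element $(a,b)$, and flipping it gives the inconsistent set $\{(b,a),(b,c)\}$. So the flippable elements, and hence $\alpha(\vF\!_O)$, are the $\le$-minimal elements of $O$. With `minimal' in place of `maximal' throughout, (i)--(iv) go through, and your construction itself needs no change.

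Two steps still need an argument. In (iii), you must show that some flippable element of $O$ lies outside the target $O^*$. Take $\vs$ minimal in $O\setminus O^*$. Any $\vr\in O$ with $\vr<\vs$ would lie in $O^*$ together with $\sv$, and $r\ne s$ by regularity; this contradicts the consistency of $O^*$. In (iv), the existence of an edge labelled $s$ needs every $\vs\in\vS$ to lie in some consistent orientation. This holds because a regular system has no co-trivial elements. The uniqueness of that edge uses regularity as well as nestedness, to see that only one orientation of each $r\ne s$ can point towards~$s$.
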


\noindent
We remark that if $\alpha$ is not injective on some~$\vF\!_t$, then $\alpha(\vE(T))$ can fail to be nested.%
   \COMMENT{}

\medbreak

\cref{lem:starsimpliesnested} motivates the study of $S$-trees over sets~$\F$ of stars (of separations): they make precise, in a more general way than \td s do, the `tree-like way' in which nested sets of separations cut up any under\-lying structure.

The existence of an $S$-tree $(T,\alpha)$ over a set~$\F$, not necessarily consisting of stars, precludes the existence of an $\F$-tangle of~$S$. 
Indeed, given any orientation $\tau$ of~$S$, the set $\alpha^{-1}(\tau)$%
   \COMMENT{}
   is an orientation of the edges of~$T$. Since trees contain no cycles, this orientation will contain~$\vF\!_t$ for some~$t$, so that $\alpha(\vF\!_t)\subseteq\tau$. But as $(T,\alpha)$ is an $S$-tree over~$\F$ we have $\alpha(\vF\!_t)\in\F$, so $\tau$ is not an $\F$-tangle.

The converse implication, that there is an $S$-tree over~$\F$ whenever $S$ has no $\F$-tangle, holds only when $\F$ consists of stars. But assuming this is not enough. In~\cite{TangleTreeAbstract}, which offers the most general theorem of this type to date, there is the further assumption that $\vS$ must be \emph{$\F$-separable}. This is a richness condition on~$\F$, different from ours, which requires~$\F$ to also contain stars which its elements `induce on the sides' of certain separations in~$S$. The precise notion of $\F$-separability is quite technical, but immaterial here, so we refer the reader to~\cite{TangleTreeAbstract,TangleTreeGraphsMatroids} for details.

The following dichotomy between $\F$-tangles and $S$-trees over~$\F$ is the classical tangle-tree duality theorem for abstract separation systems:

\begin{theorem}{\cite{TangleTreeAbstract}}\label{thm:prevduality}
Let $\vS$ be a separation system inside a universe~$\vU$ of separations,%
   \COMMENT{}
   and let $\F\subseteq 2^{\vS}\!$%
   \COMMENT{}
   be a set of stars that is standard for $\vS$. 
If $\vS$ is $\F$-separable, then exactly one of the following assertions holds:
\begin{enumerate}\itemsep2pt\vskip2pt
  \item there exists an $\F$-tangle of $S$;
  \item there exists an $S$-tree over $\F$.
\end{enumerate}
\end{theorem}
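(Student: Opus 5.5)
The easy implication is already settled in the text: an $S$-tree over~$\F$ rules out $\F$-tangles, because for any orientation~$\tau$ some $\vF\!_t$ has $\alpha(\vF\!_t)\subseteq\tau$. So the work is to show that if $S$ has no $\F$-tangle then an $S$-tree over~$\F$ exists. I would assume $S$ is finite, as in~\cite{TangleTreeAbstract}, and argue by induction on the number of $s\in S$ for which neither $\{\vs\}$ nor $\{\sv\}$ lies in~$\F$.

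\emph{Base case.} Suppose every $s$ has a forced orientation, i.e.\ one with $\{\sv\}\in\F$, so that the orientation $\vs$ is forced. Let $\tau$ consist of these forced orientations. If $\tau$ is consistent, it is an $\F$-tangle, because $\F$ consists of stars. Any such star contained in~$\tau$ would have to be a singleton $\{\sv\}$ with $\sv\in\tau$, which is impossible since $\sv$ is the forbidden orientation. This contradicts our assumption. If $\tau$ is inconsistent, we have $\vr,\vs\in\tau$ pointing away from each other, with $\{\rv\},\{\sv\}\in\F$. Then the tree with a single edge, labelled $\vr$ in one direction and $\rv$ in the other, has the singleton star $\{\rv\}\in\F$ at one end. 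At the other end we need $\{\vr\}\in\F$ as well, or else we subdivide the edge so that $\vs$ appears on the far side. The fact that $\F$ is standard handles the trivial and degenerate cases here. This step is routine bookkeeping.

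\emph{Induction step.} Pick $s$ with neither orientation singleton-forbidden. Let $\F_1:=\F\cup\{\{\sv\}\}$ and $\F_2:=\F\cup\{\{\vs\}\}$. An $\F_1$-tangle would be an $\F$-tangle containing~$\vs$, so none exists, and likewise for~$\F_2$. Both families are again stars, standard, and (one checks) $\vS$ remains separable with respect to them. Induction then gives $S$-trees $(T_1,\alpha_1)$ over~$\F_1$ and $(T_2,\alpha_2)$ over~$\F_2$. If either tree uses no leaf with the new singleton star, it is already an $S$-tree over~$\F$ and we are done.

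Otherwise every leaf of $T_1$ labelled by $\{\sv\}$ must be replaced, and we build the replacement from $T_2$. Here $\F$-separability enters. It lets us replace $\vs$ by a suitably minimal, $\F$-linked $\vsdash\le\vs$ (or the dual), so that shifting every label $\vr$ of $T_2$ pointing away from its $\{\vs\}$-leaves to $\vr\lor\vsdash$ keeps $\alpha$ well defined. It also ensures every shifted star $\alpha(\vF\!_t)$ lies in~$\F$, while the former $\{\vs\}$-leaf star becomes $\{\vsdash\}$ and matches the $\{\sv\}$-leaves of $T_1$ after adjusting $T_1$ symmetrically. Gluing a copy of the shifted $T_2$ onto each such leaf of $T_1$ along the edge labelled $\vsdash$ yields an $S$-tree over~$\F$.

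The main obstacle is this shifting step. It requires choosing $\vsdash$ so that shifted separations stay inside $\vS$, which is where submodularity or the universe $\vU$ is used, and so that shifted stars remain in $\F$, which is exactly what $\F$-separability is designed to give. I would first try to choose $\vsdash$ minimal among separations in $\vS$ with $\vsdash\le\vs$ that are not co-trivial, and verify linkedness directly from that minimality.
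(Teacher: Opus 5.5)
The paper does not prove this statement. It quotes it from~\cite{TangleTreeAbstract}, and it reaches the same dichotomy only from different premises (\cref{cor:E,thm:newduality}), via irreducible $\F$-trees converted into $S$-trees by \cref{thm:main}. So your proposal has to stand on its own. It has genuine gaps, and they sit exactly where $\F$-separability is indispensable.

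\emph{Base case.} The inconsistent case is not routine bookkeeping, and standardness cannot settle it. Let $S=\{r,s\}$ consist of the bipartitions $\{\{a,b\},\{c\}\}$ and $\{\{a\},\{b,c\}\}$ of $\{a,b,c\}$, with $\vr$ pointing to $\{c\}$ and $\vs$ pointing to $\{a\}$. Then $\vr<\sv$ and no element is trivial. Let $\F=\{\{\rv\},\{\sv\}\}$.
\begin{itemize}
\item This $\F$ is a standard set of stars, and both separations are forced.
\item The forced orientations $\{\vr,\vs\}$ are inconsistent, so there is no $\F$-tangle.
\item There is no $S$-tree over~$\F$. For any edge $tu$, both $\alpha(t,u)$ and $\alpha(u,t)=\alpha(t,u)^*$ would have to lie in $\{\rv,\sv\}$, and a one-node tree gives $\emptyset\notin\F$.
\item Your subdivided path fails too: its middle node carries $\{\vr,\vs\}$, which is not even a star.
\end{itemize}
So the base case is precisely where separability must be used, and you never use it there. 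Separately, your claim that a consistent~$\tau$ can only contain singleton stars of~$\F$ is false. That case is easy to fix with a star-shaped $S$-tree whose leaves carry the forced singletons.

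\emph{Induction step.} You assert that $\vS$ stays separable for $\F_1=\F\cup\{\{\sv\}\}$, but this is not a formality. Separability includes closure of~$\F$ under shifting, and the new star $\{\sv\}$ will in general violate it: shifting it towards some $\vr<\sv$ yields $\{\vr\}$, which $\F_1$ need not contain. If you add all such $\{\vr\}$ as well, the special leaf labels of~$T_1$ become various $\vr<\sv$, and nothing in~$T_2$ matches them. Without this step, the induction hypothesis is simply unavailable.

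By contrast, the step where you do invoke separability is both unworkable and unnecessary.
\begin{itemize}
\item Unworkable: shifting $T_2$ turns $\vs$ into some $\vsdash\le\vs$. Shifting $T_1$ towards its special leaves can only turn $\sv$ into something $\le\sv$. So the two labels can match only if $\vsdash=\vs$; there is no ``symmetric adjustment''.
\item Unnecessary: prune $T_1$ and $T_2$ until $\alpha$ is injective on every $\vF\!_t$, by deleting a branch at any node with two incoming edges of equal label. By \cref{lem:starsimpliesnested}, two $\{\vs\}$-leaves in~$T_2$ would then force $\vs\le\sv$, and two $\{\sv\}$-leaves in~$T_1$ would force $\sv\le\vs$. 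Hence one tree has at most one special leaf, and copies of it glue directly onto all special leaves of the other.
\end{itemize}
Once repaired, your scheme therefore reduces the theorem to exactly the two steps it leaves unproved. Your closing idea of choosing $\vsdash$ ``minimal'' and verifying linkedness also does not fit here. The abstract statement has no order function, and $\F$-separability is a hypothesis to be invoked, not something to derive.
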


When $\F$ consists of stars, we can compare \cref{thm:prevduality} with what our more general \cref{thm:generalduality2} says for such~$\F$. Both theorems guarantee that if $S$ admits no $\F$-tangle then it has an easily detectable certificate in the form of a certain tree-like structure: an $S$-tree over $\F$ in the case of \cref{thm:prevduality}, and an $\F$-tree of $\vS$ in the case of our new \cref{thm:generalduality2}.

The following result shows that our \cref{thm:generalduality2} is not only more general, because its~$\F$ need not consist of stars, but also stronger than \cref{thm:prevduality}, because we can obtain $S$-trees over~$\F$ from $\F$-trees when $\F$ does consist of stars. In fact, {\em every\/} irreducible $\F$-tree of~$\vS$ then has the property that its edge labels in~$\vS$ are precisely the edge labels of some $S$-tree over~$\F$:

\begin{theorem}\label{thm:main}
Let $\vS$ be a separation system without trivial elements, ${\F\subseteq 2^{\vS}}\!\!$ a~set of stars, and $(T,r,\beta)$ an irreducible $\F$-tree of~$\vS$. Then there exists an $S$-tree $(T', \alpha)$ over $\F$ and a map $\gamma\colon V(T') \cup \vE(T') \to V(T) \cup E(T)$ such that%
   \COMMENT{}
\begin{enumerate}\itemsep2pt\vskip4pt
    \item\label{item:main1} $\gamma$ is a bijection from the nodes of $T'$ to the leaves of $T$;
    \item\label{item:main2} $\gamma$ is a bijection from the oriented edges of $T'$ to the edges of~$T$;
    \item\label{item:main3} for every edge~$e$ of~$T'\!$ there is a node~$v_e$ of~$T\!$ such that $\{\gamma(\ve), \gamma(\ev)\} = E_{v_e}$, and these~$v_e$ are distinct for different~$e$;
    \item\label{item:main4} for every oriented edge $\ve$ of $T'$, the map $\gamma$ sends $t(\ve)$ to a leaf $\ell > v_e$ of $T$ such that $\gamma(\ve)$ is the first edge of the path $v_eT\ell$;
    \item\label{item:main5} $\alpha = \beta \circ \gamma$.
\end{enumerate}\vskip2pt
In particular, $\beta(E(T)) = \alpha(\vE(T'))$. 
\end{theorem}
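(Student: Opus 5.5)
We would build $T'$ directly from the leaves and branching nodes of~$T$. First, every non-leaf $v$ of~$T$ has exactly two children. Suppose instead that $v$ had just one child~$w$. Then $\beta$ maps $E_v=\{vw\}$ bijectively onto $\{\vsv,\svv\}$, so $s_v$ is degenerate. Since every element of~$\F$ is a star, and stars contain no degenerate separations, no subset of any $\beta_\ell$ lying in~$\F$ contains $\beta(vw)$. As all leaves of an $\F$-tree are forbidden, $vw$ is then necessary for no leaf, contradicting irreducibility.

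So $T$ is a binary tree with $n$ leaves and $n-1$ branching nodes. We let $V(T')$ be the set of leaves of~$T$ and create one edge $e_v$ of~$T'$ for each non-leaf~$v$ of~$T$, with children $w_1,w_2$. For each $i$ we choose a leaf $\ell_i\ge w_i$ such that $vw_i$ is necessary for~$\ell_i$. The edge $e_v$ then joins $\ell_1$ and~$\ell_2$. We set $\gamma(v)$-related data as follows: $\gamma$ maps the orientation of $e_v$ pointing to $\ell_i$ to the edge $vw_i$, so that $\alpha:=\beta\circ\gamma$ maps it to $\beta(vw_i)$. With these choices, \labelcref{item:main1}--\labelcref{item:main5} hold by construction, and $\alpha(\ev)=\alpha(\ve)^*$ because $\beta(E_v)=\{\vsv,\svv\}$.

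It remains to check two things:
\begin{enumerate}
\item $T'$ is a tree;
\item $\alpha(\vF\!_\ell)\in\F$ for every leaf~$\ell$.
\end{enumerate}
For connectivity we argue by induction on the height of~$v$: all leaves below~$v$ lie in one component of the graph formed by the edges $e_u$ with $u\ge v$, since $e_v$ joins the two subtrees at $w_1$ and~$w_2$. A connected graph with $n$ nodes and $n-1$ edges is a tree. For the second point, note that $\alpha(\vF\!_\ell)$ is the set of labels $\beta(vw)$ with $v<\ell$ for which $\ell$ was the leaf chosen for~$(v,w)$; in particular $\alpha(\vF\!_\ell)\subseteq\beta_\ell$.

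The main obstacle is to make these choices coherently, so that $\alpha(\vF\!_\ell)$ is exactly a member of~$\F$ and not just some subset of~$\beta_\ell$. The plan is to fix, for every leaf~$\ell$, one $\sigma_\ell\in\F$ with $\sigma_\ell\subseteq\beta_\ell$, and to show that the leaves can be chosen so that $\ell$ is chosen for $(v,w)$ precisely when $\beta(vw)\in\sigma_\ell$. This requires two facts.

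\emph{Existence.} Each $(v,w)$ has some leaf $\ell\ge w$ with $\beta(vw)\in\sigma_\ell$. This follows from necessity: some leaf $\ell\ge w$ has $vw$ necessary for it, and then every subset of $\beta_\ell$ in~$\F$, in particular $\sigma_\ell$, contains $\beta(vw)$.

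\emph{Uniqueness.} No two leaves $\ell\ne\ell'$ above~$w$ both have $\beta(vw)$ in their chosen stars. Here we would use the star property together with the absence of trivial elements. Let $u$ be the branching node where the paths to $\ell$ and~$\ell'$ split, with labels $\vs_u$ towards~$\ell$ and $\sv_u$ towards~$\ell'$. If $\beta(vw)=:\vr$ lay in both $\sigma_\ell$ and $\sigma_{\ell'}$, consistency of $\beta_\ell$ and $\beta_{\ell'}$ would force relations between $\vr$ and both orientations of~$s_u$. We expect these to make $\vr$ or its inverse trivial, or else to make the edge towards~$u$ unnecessary for all leaves on one side, contradicting irreducibility. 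If uniqueness fails in general, the fallback is to choose the $\sigma_\ell$ top-down, re-choosing them along the subtree so that each label is claimed by exactly one leaf on each side.
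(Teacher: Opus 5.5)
There is a genuine gap, and it sits exactly at the step you call the main obstacle: showing that $(T',\alpha)$ is over~$\F$.

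The rest of your proposal is sound. Your argument that every non-leaf of~$T$ has two children, your construction of~$T'$, and your proof that $T'$ is a tree all work. In fact your connectivity argument (the edge $e_v$ joins the leaf sets above $w_1$ and~$w_2$, then count $n$ nodes and $n-1$ edges) is simpler than the paper's acyclicity argument, and it works for any choice of leaves $\ell_i\ge w_i$.

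\textbf{The gap.} You reduce the over-$\F$ property to the claim that no two leaves $\ell\ne\ell'$ above~$w$ both have $\beta(vw)$ in their chosen stars, but you do not prove it. The local argument you sketch does not go through. With $x:=s_u$ and $\vec x$ the label towards~$\ell$, consistency of $\beta_\ell$ and~$\beta_{\ell'}$ yields only non-relations such as $\vr\not\le\overleftarrow x$ and $\vr\not\le\vec x$, which force nothing. The star property would make $\vr$ trivial if $\vec x\in\sigma_\ell$ and $\overleftarrow x\in\sigma_{\ell'}$. Irreducibility does not give you that, because the two edges at~$u$ may be necessary for leaves other than $\ell$ and~$\ell'$. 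The fallback of re-choosing the $\sigma_\ell$ top-down is not an argument. So the substantive part of the theorem is still missing.

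\textbf{The missing idea.} No coordination of the choices is needed; the tree structure of~$T'$ does the work. Keep the $\ell_i$ as in your Existence step, i.e.\ leaves for which $vw_i$ is necessary.
\begin{enumerate}
\item Show that $\alpha$ strictly decreases along oriented paths of~$T'$. If $t(\ve)=i(\vf)$ with $e\ne f$, then $\gamma(\ve)$ and $\gamma(\fv)$ both lie on $rT\ell$ for $\ell=\gamma(t(\ve))$ and are both necessary for~$\ell$. So $\alpha(\ve)$ and $\alpha(\fv)$ lie in a common star $\sigma\subseteq\beta_\ell$, giving $\alpha(\ve)\ge\alpha(\fv)^*=\alpha(\vf)$. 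The inequality is strict because $s_{v_e}\ne s_{v_f}$.
\item Fix a node~$t$ of~$T'$, let $\ell=\gamma(t)$, and let $\sigma\in\F$ be any subset of~$\beta_\ell$. Necessity gives $\alpha(\vF\!_t)\subseteq\sigma$.
\item For the reverse inclusion, suppose some $\vs\in\sigma$ is not in $\alpha(\vF\!_t)$. Then $\vs=\alpha(\vf)$ for an edge~$f$ of~$T'$ not at~$t$ with $\gamma(\vf)$ on $rT\ell$. Since $T'$ is a tree, $\vf$ or~$\fv$ is $\ge$ some $\ve\in\vF\!_t$.
\begin{itemize}
\item If $\vf\ge\ve$, then $\vs\ge\alpha(\ve)$ by step~(i), and $\vs\ge\alpha(\ve)^*$ because $\sigma$ is a star. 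So $\vs$ is trivial, a contradiction.
\item If $\fv\ge\ve$, then $\vs\le\alpha(\ve)^*$, so $\vs$ and $\alpha(\ve)$ are inconsistent elements of~$\beta_\ell$, a contradiction.
\end{itemize}
\end{enumerate}
Hence $\alpha(\vF\!_t)=\sigma\in\F$.

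Your uniqueness claim then follows as a corollary: each $\beta_\ell$ contains exactly one member of~$\F$, and each edge of~$T$ is necessary for exactly one leaf. That is why trying to establish it first, by a local argument, goes the wrong way round.
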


Before we prove \cref{thm:main}, let us note a corollary for $\F$-trees that seems remarkable in its own right, quite independently of tangle-tree duality. If~$\F$ consists of stars, the edges of any irreducible $\F$-tree map to nested separations in~$\vS$:

\begin{corollary}\label{cor:nested}
Let $\vS$ be a separation system without trivial separations, and let $\F\subseteq 2^{\vS}\!$ be a set of stars. Then for every irreducible $\F$-tree $(T,r,\beta)$ of~$\vS$ the image $\beta(E(T))$ of its edges is a nested set of separations in~$\vS$.
\end{corollary}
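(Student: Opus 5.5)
The plan is to derive \cref{cor:nested} from \cref{thm:main} together with the first part of \cref{lem:starsimpliesnested}.

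Given an irreducible $\F$-tree $(T,r,\beta)$ of~$\vS$, where $\vS$ has no trivial elements and $\F$ consists of stars, \cref{thm:main} provides an $S$-tree $(T',\alpha)$ over~$\F$ with $\alpha(\vE(T'))=\beta(E(T))$. So it suffices to show that $\alpha(\vE(T'))$ is nested. By \cref{lem:starsimpliesnested}, this holds once we know that $\alpha$ is injective on every star $\vF\!_t$ of~$T'$.

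The key step is therefore to check this injectivity. We have $\alpha=\beta\circ\gamma$, and $\gamma$ is injective on oriented edges by~\labelcref{item:main2}, so it is enough to show that $\beta$ is injective on $\gamma(\vF\!_t)$.

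Suppose $\ve\ne\vf$ are distinct edges in $\vF\!_t$, so $e\ne f$, and that $\beta(\gamma(\ve))=\beta(\gamma(\vf))$. By~\labelcref{item:main4}, $\gamma(t)$ is a leaf~$\ell$ with $\ell>v_e$ and $\ell>v_f$. Moreover $\gamma(\ve)$ is the first edge of $v_eT\ell$ and $\gamma(\vf)$ is the first edge of $v_fT\ell$. Since the $v_e$ are distinct by~\labelcref{item:main3}, both edges lie on the path $rT\ell$ and come from distinct nodes $v_e\ne v_f$. The labels of a separation tree satisfy $s_u\ne s_v$ for comparable non-leaves $u<_r v$. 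Since $v_e,v_f$ are both below~$\ell$, they are comparable, so $s_{v_e}\ne s_{v_f}$. Hence $\beta(\gamma(\ve))$ and $\beta(\gamma(\vf))$ are orientations of different separations, and in particular they are distinct. This gives injectivity of $\alpha$ on~$\vF\!_t$.

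Finally, \cref{lem:starsimpliesnested} shows that $\alpha(\vE(T'))=\beta(E(T))$ is nested. The only delicate point is making sure that \labelcref{item:main3} and \labelcref{item:main4} really place both edges on the single root-to-leaf path to~$\gamma(t)$. This is exactly what \labelcref{item:main4} states, so I expect no real obstacle beyond unpacking \cref{thm:main}.
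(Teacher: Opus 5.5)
Your proposal is correct and is essentially the paper's own proof: you apply \cref{lem:starsimpliesnested} to the $S$-tree $(T',\alpha)$ from \cref{thm:main}, and reduce injectivity of $\alpha=\beta\circ\gamma$ on $\vF\!_t$ to injectivity of $\beta$ on the root-to-leaf path $rT\gamma(t)$, which holds because $s_u\ne s_v$ for $u<_r v$. The paper states this last step as injectivity of $\beta$ on all of $E(rT\ell)$ rather than pair by pair, but the argument is the same.
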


\begin{proof}
$\!$Our aim is to apply \cref{lem:starsimpliesnested} to the $S$-tree $(T'\!,\alpha)$ provided by \cref{thm:main},\penalty-200 to show that $\alpha(\vE(T')) = \beta(E(T))$ is nested. To apply the lemma, we have to check that~$\alpha$ is injective on the stars~$\vF\!_t$ in~$\vE(T')$.

Given a node~$t$ of~$T'\!$, we have $t = t(\ve)$ for all $\ve\in\vF\!_t$. For $\ell:=\gamma(t)$ we therefore have $\gamma(\vF\!_t)\subseteq E(rT\ell)$ by \cref{thm:main}\,\ref{item:main4}.%
   \COMMENT{}
   In order to show that $\alpha$ is injective on~$\vF\!_t$, it thus suffices by \ref{item:main5} $\alpha=\beta\circ\gamma$ and the injectivity of~$\gamma$ to show that~$\beta$ is injective on~$E(rT\ell)$. This, however, follows from the last clause in the definition of separation trees.%
   \COMMENT{}
\end{proof}

\begin{proof}[Proof of \cref{thm:main}]
We start our construction of~$T'$ by taking as~$V(T')$ any set we can map to the leaves of~$T$ by a bijection~$\gamma$. This ensures~\ref{item:main1}.

We shall give~$T'$ one edge~$e$ for every non-leaf node $v =: v_e$ of~$T$. To choose the ends of~$e$ in~$T'$, consider the children $w_1$ and~$w_2$ of~$v$ in~$T$.%
   \footnote{These are distinct: $s_v$~cannot be degenerate, because $vw_1$ is necessary for a forbidden%
   \COMMENT{}
   leaf $\ell\ge w_1$ of~$T$, so $\beta(vw_1)$ lies in a star $\sigma\in\F$. Stars do not contain degenerate separations.}%
   \COMMENT{}
   For each $i\in\{1,2\}$, since $T$ is irreducible, the edge~$vw_i$ is necessary for some leaf~$\ell_i\ge w_i$ of~$T$.%
   \footnote{Our proof will imply that these~$\ell_i$ are unique, but for the definition it suffices to pick any.}%
   \COMMENT{}
   Let $e$ join the nodes $\gamma^{-1}(\ell_1)$ and~$\gamma^{-1}(\ell_2)$ of~$T'$, and let $\gamma(\ve) := vw_i$ for the orientation~$\ve$ of~$e$ for which $\gamma(t(\ve)) = \ell_i$. This satisfies \ref{item:main2}--\ref{item:main4}, if we think of~$T'$ as a multigraph for now, i.e., allow multiple edges.

However, since $v_e$ is the unique infimum of~$\ell_1$ and~$\ell_2$ in~$T$, the pair $\gamma^{-1}(\{\ell_1,\ell_2\})$ of nodes in~$T'$ is joined only by the edge~$e$: the graph~$T'$ has no parallel edges and no loops.
Finally, now that $\gamma$ is fixed, let $\alpha:=\beta\circ\gamma$ as in~\ref{item:main5}.

It remains to show that $T'$ is a tree, and that $(T',\alpha)$ is over~$\F$: as $\alpha(\ve)^* = \alpha(\ev)$ by definition of $\gamma$ and~$\alpha$, it will be clear that $(T',\alpha)$ is an $S$-tree once we know that $T'$ is a tree.

Let us begin by showing that $T'$ is acyclic. Consider two edges $e, f$ of $T'$ with $t(\ve) = i(\vf)$. By~\labelcref{item:main4}, the edges $\gamma(\ve)$ and $\gamma(\fv)$ of~$T$ at~$v_e$ and~$v_f$, respectively, both lie on the path~$rT\ell$ for $\ell=\gamma(t(\ve))=\gamma(i(\vf))$. As $e \neq f$, the nodes~$v_e$ and~$v_f$ of~$rT\ell$ are distinct,%
   \COMMENT{}
   by~(iii), so $v_e < v_f$ or $v_f < v_e$. As $s_u \neq s_v$ for $u < v$ in any separation tree, $\alpha(\ve)=\beta(\gamma(\ve))$ and $\alpha(\fv) = \beta(\gamma(\fv))$, compare~\labelcref{item:main5}, are thus orientations of distinct separations $s_{v_e}\!\ne s_{v_f}\!$ in~$S$.

Since we chose~$\gamma^{-1}(\ell)$ as~$t(\ve)$ and as~$t(\fv)$ when we picked the ends of the edges~$e$ and~$f$ in~$T'$, the edges $\gamma(\ve)$ and $\gamma(\fv)$ of~$T$ are necessary for~$\ell$: their $\beta$-values $\alpha(\ve)$ and $\alpha(\fv)$ each lie in some $\sigma \in \F$ that is a subset of~$\beta_{\ell}$, and every subset of~$\beta_{\ell}$ in~$\F$ contains both. Consider any such $\sigma\in\F$. Since $\sigma$ is a star, we have $\alpha(\ve) \geq \alpha(\fv)^* = \alpha(\vf)$. As $s_{v_e}\!\ne s_{v_f}$, the above inequality is strict. The $\alpha$-values along any oriented path in~$T'$ thus strictly decrease.%
   \COMMENT{}
   Hence $T'$ contains no (oriented) cycles.

To complete our proof that $T'$ is a tree, it suffices to show that it has one more node than edges. The first of these numbers equals the number of leaves of $T$; the second equals the number of non-leaves of $T$. This latter number is one less than the former, since $T$ is a binary tree. 

To show that $(T', \alpha)$ is an $S$-tree over~$\F$, consider any node~$t$ of~$T'$; we have to show that $\alpha(\vF\!_t) \in \F$. As earlier,%
   \COMMENT{}
   all the edges $\ve\in\vF\!_t$ are such that $\gamma(\ve)\in rT\ell$ for $\ell := \gamma(t)$ is necessary for~$\ell$, so $\alpha(\vF\!_t) = \beta(\gamma(\vF\!_t)) \subseteq\sigma\subseteq\beta_\ell$ for some $\sigma\in\F$. It remains to show that $\alpha(\vF\!_t)\supseteq \sigma$.

Suppose there exists an $\vs \in \sigma\setminus \alpha(\vF\!_t)$. As $\vs\in\sigma\subseteq\beta_\ell = \beta(rT\ell)$, we can find an edge $vw$ in~$rT\ell$ with $v<w$ such that $\beta(vw) = \vs$. Let $\vf := \gamma^{-1}(vw)$. Then $\alpha(\vf)=\vs$, and $\gamma(\vf)\in E(rT\ell)$ but $\gamma(\vf)\notin\gamma(\vF\!_t)\subseteq E(rT\ell)$. In particular, $v_f\ne v_e$ for every $\ve\in\vF\!_t$, and hence $f\ne e$, so neither $\vf$ or~$\fv$ lies in~$\vF\!_t$. But note that $v_f$ and all these~$v_e$ lie on the path~$rT\ell$, so they are comparable in the tree-order on~$V(T)$.

Since $T'$ is a tree, either $\vf$ or $\fv$ points towards~$t$ in~$T'$. Suppose first that $\vf$ does. Then $\vf\ge \ve$%
   \COMMENT{}
   in~$\vE(T')$ for some $\ve \in \vF\!_t$. As in our acyclicity proof earlier, this implies that $\alpha(\vf) \geq \alpha(\ve)$.%
   \COMMENT{}
   As both $\vs = \alpha(\vf)$ and~$\alpha(\ve)$ lie in the star~$\sigma$, we have $\alpha(\vf)\ge\alpha(\ve)^*$ too.%
   \COMMENT{}
   As $v_f$ and~$v_e$ are distinct nodes on~$rT\ell$, which implies that $s=s_{v_f}\ne s_{v_e} = \{\alpha(\ve),\alpha(\ve)^*\}$ since $T$ is a separation tree, $\vs = \alpha(\vf)$ is thus trivial in~$\vS$ witnessed by~$s_{v_e}$, a~contradiction to our assumptions about~$\vS$.

Suppose now that the edge~$\fv$ points towards~$t$ in~$T'$. Then $\fv \geq \ve\in\vF\!_t$ and hence $\alpha(\vf)^*\ge\alpha(\ve)$, while $s=s_{v_f}\ne s_{v_e} = \{\alpha(\ve),\alpha(\ve)^*\}$, as earlier. This makes $\vs = \alpha(\vf)$ and $\alpha(\ve)$ inconsistent elements of $\beta_{\ell}$, a~contradiction to the fact that $T$ is a consistent separation tree.
\end{proof}

Let us discuss briefly what we can say if $\vS$ does have trivial elements. The proof given above stands until we have to prove that the $S$-tree~$T'\!$ we have constructed is over~$\F$. If $\vr\in\vS$ is trivial in~$\vS$ witnessed by~$s\in S$, this can indeed fail. Indeed, assume that $\vs = \alpha(\ve)$ with $\gamma(t(\ve)) = \ell$ and $\gamma(t(\ev)) = \ell'$. Let $v_f$ be the predecessor%
   \COMMENT{}
   of~$v_e$ in~$T$, and assume that $\beta(v_f v_e) = \vr$.
   \COMMENT{}
   Then $v_f v_e$ is the first edge of the path in~$T$ from~$v_f$ to both $\ell$ and~$\ell'$, so either of these would be eligible as~$\gamma(t(\vf))$, where $\vf$ is the orientation of~$f$ with $\alpha(\vf) = \vr$, as long as $v_f v_e$ is necessary for~$\ell$ and~$\ell'$, respectively. This will be the case if and only if $\vr$ lies in the stars $\sigma\in\F$ or~$\sigma'\in\F$ that contain~$\vs$ and~$\sv$, respectively; let us assume it lies in both. (Recall that $\beta^{-1}(\vs)$ and~$\beta^{-1}(\sv)$ are necessary for $\ell$ and~$\ell'$, respectively, so $\vs\in\sigma$ and $\sv\in\sigma'$ for some and for all stars $\sigma\subseteq\beta_\ell$ and $\sigma'\subseteq\beta_{\ell'}$ in~$\F$.) However, when we constructed~$T'$, we chose only one of~$\ell$ and~$\ell'$ as~$\gamma(t(\vf))$, say~$\ell$. We then have $\vr\in\sigma'\setminus\alpha(\vF_{t'})$ for $t' := \gamma^{-1}(\ell')$.\looseness=-1

However, we can easily mend this: just add a new leaf~$t$ to~$T'$ adjacent to~$t'$, with $\alpha(tt') := \vr$. We shall then need that $\{\rv\}\in\F$, but this will be the case if we assume that $\F$ is standard, as we have to in order to obtain~$T$ in the first place. More generally, we can add to~$T'$ such new leaves at all nodes~$t'$ whose~$\alpha(\vF_{t'})$ fails to contain a trivial separation~$\vr$ whose triviality is witnessed by some~$s$ with ${\vs\in\alpha(\vF_{t'})}$. The modified tree will no longer satisfy the detailed clauses in \cref{thm:main}, but it will be an $S$-tree over~$\F$ all whose non-trivial labels $\alpha(\vs)$ are obtained as in our proof.

Note that the `tree-likeness' in $S$-trees over $\F$ and in $\F$-trees, respectively, refers to very different structures: while $S$-trees over~$\F$ in \cref{thm:prevduality} exhibit tree-like\-ness in the (object divided by the) separation system $\vS$ itself -- for example, they define tree-decompositions with parts specified by $\F$ if $S$ consists of separations of a graph -- the $\F$-trees in \cref{thm:generalduality2} are `tree-like' only in the formal sense of a decision-tree for how to orient the elements of $S$. It is all the more remarkable, therefore, that the structuring trees found in~$S$ by \cref{thm:prevduality} can be recovered from the $\F$-trees in \cref{thm:generalduality2}, as shown in \cref{thm:main}. We thus have a new tangle-tree duality theorem: one that offers the same dichotomy as \cref{thm:prevduality}, but with a different premise. 

To state this in optimal form, we need one more definition.
Given any set~$\F \subseteq 2^{\vS}\!$ and any order function on~$S$, let $\F_{\rm eff }$ denote the set of all sets $\sigma \in \F$ that are efficient in their closure $\lfloor \sigma \rfloor$ in~$\vS$%
   \COMMENT{}
   (see \cref{sec:TSTs}). Note that this definition makes sense independently of any tangle structure trees.

\begin{corollary}\label{cor:E}
Let $\vS$ be an ordered separation system without trivial separations. Let $\F \subseteq 2^{\vS}\!$ be a%
   \COMMENT{}
   rich set of stars. Then exactly one of the following assertions holds:\looseness=-1
\begin{enumerate}\itemsep2pt\vskip2pt
  \item\label{item:E1} there exists an $\F$-tangle of $S$;
  \item\label{item:E2} there exists an $S$-tree over $\F$.
\end{enumerate}
In the case of {\rm\labelcref{item:E2}}, the $S$-tree can be chosen to be over~$\F_{\rm eff}$. 
\end{corollary}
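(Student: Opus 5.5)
We prove the two halves of the dichotomy separately, and for the existence half we pass through an $\F$-tree of~$\vS$ and then convert it with \cref{thm:main}.

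First, \labelcref{item:E1} and \labelcref{item:E2} cannot hold together. This is the general observation made before \cref{thm:prevduality}. Given an $S$-tree $(T,\alpha)$ over~$\F$ and any orientation~$\tau$ of~$S$, the set $\alpha^{-1}(\tau)$ orients every edge of~$T$. Since $T$ is acyclic, this orientation contains some~$\vF\!_t$, so $\alpha(\vF\!_t)\subseteq\tau$. As $\alpha(\vF\!_t)\in\F$, the orientation~$\tau$ is not an $\F$-tangle.

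Next, assume that $S$ has no $\F$-tangle, and aim for an $S$-tree over~$\F_{\rm eff}$. Because $\vS$ has no trivial elements, the condition that $\{\sv\}\in\F$ for every trivial $\vs\in\vS$ holds vacuously, so $\F$ is standard for~$\vS$. It is rich by assumption. \Cref{thm:generalduality2} therefore gives an $\F$-tree $(T,r,\beta)$ of~$\vS$ that is ordered, irreducible and efficient. Since $\F$ consists of stars and $\vS$ has no trivial elements, \cref{thm:main} applies to~$T$ and yields an $S$-tree $(T',\alpha)$ over~$\F$. This proves the dichotomy.

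It remains to show that $(T',\alpha)$ is even over~$\F_{\rm eff}$. This is the only step beyond quoting earlier results, and it rests on details from the proof of \cref{thm:main}. Fix a node~$t$ of~$T'$ and let $\ell:=\gamma(t)$, a forbidden leaf of~$T$. That proof showed $\alpha(\vF\!_t)=\sigma$ for some $\sigma\in\F$ with $\sigma\subseteq\beta_\ell$: it first established $\alpha(\vF\!_t)\subseteq\sigma$ and then excluded any element of $\sigma\setminus\alpha(\vF\!_t)$. So it suffices to show that $\sigma$ is efficient in~$\ucl(\sigma)$.

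Since $T$ is consistent, $\beta_\ell$ is consistent, and hence so is its subset~$\sigma$. \Cref{lem:closure} then gives $\ucl(\sigma)\subseteq\ucl(\beta_\ell)$. Because $T$ is efficient, no element of $\beta_\ell$ is eclipsed by another element of~$\ucl(\beta_\ell)$. Now suppose some $\vs\in\sigma$ were eclipsed by an element $\vr\ne\vs$ of~$\ucl(\sigma)$. Then $\vr\in\ucl(\beta_\ell)$ and $\vs\in\beta_\ell$, which contradicts the efficiency of $\beta_\ell$ in $\ucl(\beta_\ell)$. Hence $\sigma\in\F_{\rm eff}$ for every~$t$, and $(T',\alpha)$ is an $S$-tree over~$\F_{\rm eff}$.
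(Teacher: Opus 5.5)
Your proof is correct and follows the paper's argument: you get an ordered, irreducible, efficient $\F$-tree from \cref{thm:generalduality2}, convert it via \cref{thm:main}, and use the efficiency of $\beta_\ell$ in $\ucl(\beta_\ell)\supseteq\ucl(\sigma)$ to place the relevant stars in $\F_{\rm eff}$; your explicit remark that standardness holds vacuously is a point the paper leaves implicit. The only difference is the order of steps: the paper first notes that $T$ is already an $\F_{\rm eff}$-tree and applies \cref{thm:main} to $\F_{\rm eff}$ directly, whereas you check the stars $\alpha(\vF\!_t)$ afterwards, and for that check clauses \ref{item:main4} and \ref{item:main5} of \cref{thm:main} already give $\alpha(\vF\!_t)\subseteq\beta_{\gamma(t)}$ without your needing to look inside its proof.
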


\begin{proof}
    As indicated after \cref{lem:starsimpliesnested}, the assertions \labelcref{item:E1} and \labelcref{item:E2} cannot both hold. Let us assume that \labelcref{item:E1} fails, and show \labelcref{item:E2}. By \cref{thm:generalduality2}, there is an $\F$-tree~$T$ of~$\vS$, which may be chosen irreducible, efficient and ordered. If $T$ is efficient, it will be an $\F_{\rm eff}$-tree, since any $\sigma \subseteq \beta_\ell$ from~$\F$ is then efficient in $\lfloor \beta_\ell \rfloor$ and therefore also efficient in $\lfloor \sigma \rfloor\subseteq\ucl(\beta_\ell)$. Now \labelcref{item:E2} and the final statement of the corollary follow from \cref{thm:main}.
\end{proof}

While \cref{thm:prevduality} and \cref{cor:E} proclaim the same tangle-tree dichotomy, the requirements they make on~$\F$ in their premise are different: $\F$-separability of~$\vS$ and richness of~$\F$, respectively. In order to derive \cref{thm:prevduality} formally from \cref{cor:E} (in the case that $\F$ consists of stars) we would need to show that if $\vS$ is $\F$-separable then $\F$ is rich for~$\vS$. The condition of $\F$-separability, however, was tailor-made for the proof of \cref{thm:prevduality}. It therefore seems unlikely that we can formally derive from it that $\F$ is rich. However, in \cref{sec:measure} we shall derive this from a condition that is only slightly stronger than $\F$-separability. This stronger condition, in fact, holds and is used in all the known applications of \cref{thm:prevduality}, rather than $\F$-separability itself. This will be explored in~\cite{TSTapps}.

\medskip\section{Submodular order functions}\label{sec:orderfunctions}

\noindent
   Submodular order functions are a key ingredient to much of tangle theory. We shall need them in this paper too, both for our tree-of-tangles theorems in \cref{sec:ToTs} and to retrieve the essence of the tangle-tree duality theorem from~\cite{TangleTreeAbstract} in \cref{sec:measure}.

In both these applications we shall need tangle structure trees that are not just ordered but thoroughly ordered. These exist reliably only when our order function is injective. If it is not~-- which often happens in theoretical applications%
   \footnote{Order functions used for real-world tangle applications, such as in clustering, tend to be injective. But the standard order function on graph separations, for example, is not.}
   ~-- we have to tweak it slightly to make it so. We shall prove in this section that we can do this while preserving its submodularity.

We say that an order function~$o'$ on a set~$S$ \emph{refines\/} another order function~$o$ on~$S$ if\looseness=-1
$$
o(r) < o(s)\ \Rightarrow\  o'(r) < o'(s)\quad(\forall r,s\in S).
$$
Thus, $o'$ refines~$o$ if and only if $o'(r)\le o'(s)$ implies $o(r)\le o(s)$. But we do not have the same implication for~`$<$', since $o'$ need not be constant on the subsets of~$S$ whose elements have a common value under~$o$.

Let $\vS$ be a separation system with an order function~$o$, and let $\F$ be any set. Recall that the $\F$-tangles of the subsets
 $$S_k = \{\,s\in S\mid o(s) < k\,\}$$
 of~$S$ are the $\F$-tangles {\em in\/}~$\vS$ with respect to~$o$.

\begin{lemma}\label{lem:ordertanglestoorderingtangles}
If $o'$ refines~$o$, then the tangles in~$\vS$ with respect to~$o$ are also tangles in~$\vS$ with respect to~$o'\!$.
\end{lemma}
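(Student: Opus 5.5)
The whole point is that being an $\F$-tangle of a set $S_k\subseteq S$ depends only on the set $S_k$ and not on the order function that produced it. Consistency, $\F$-avoidance and being an orientation of $S_k$ refer only to the separations in $\vS_k$ and to $\le$ and~$^*$. So it suffices to show that every set of the form $S_k=\{s\in S: o(s)<k\}$ is also of the form $S'_{k'}=\{s\in S: o'(s)<k'\}$ for a suitable $k'\in\R\cup\{\pm\infty\}$. The infinite values only cover the degenerate cases $S_k=\emptyset$ or $S_k=S$, which we can handle separately.

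To find $k'$, the plan is to use that $o'$ refines~$o$ in the contrapositive form noted after the definition: $o'(r)\le o'(s)$ implies $o(r)\le o(s)$. Take $r\in S_k$ and $s\notin S_k$, so that $o(r)<k\le o(s)$. By refinement we get $o'(r)<o'(s)$. Thus every $o'$-value on $S_k$ lies strictly below every $o'$-value on $S\setminus S_k$, and $S_k$ is an initial segment of $S$ with respect to~$o'$.

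Now choose $k'$ as follows.
\begin{enumerate}
\item If $S\setminus S_k$ is nonempty, set $k':=\inf\{o'(s): s\notin S_k\}$.
\item Otherwise $S_k=S$, and we set $k':=\infty$, or any real number exceeding all $o'$-values if these are bounded.
\end{enumerate}
Every $s\notin S_k$ then has $o'(s)\ge k'$. We also need every $r\in S_k$ to satisfy $o'(r)<k'$.

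This last requirement is the only real obstacle, and it arises when $S$ is infinite. The infimum $k'$ might be approached from above but not attained, and some $o'(r)$ with $r\in S_k$ might equal it. Strict separation of the two sets does not by itself exclude $\sup_{S_k}o'=\inf_{S\setminus S_k}o'$. If $S$ is finite, or the infimum is attained, then $o'(r)<o'(s_0)=k'$ for the minimiser $s_0$, and we are done.

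In the general case we instead define $S'$ directly as $\{s: o'(s)<k'\}$ and distinguish two cases.
\begin{enumerate}
\item If $\max_{S_k}o'$ exists and equals $\inf_{S\setminus S_k}o'$, we use that the paper only ever compares such sets through the nested family $(S'_\ell)$.
\item Otherwise we replace $k'$ by $\sup_{S_k}o'$ plus a suitable choice. Concretely, take $k'$ to be the infimum if no element of $S_k$ attains it. If some $r_0\in S_k$ attains it, then the infimum over $S\setminus S_k$ is not attained, and we take $k'$ slightly larger, which is impossible without hitting elements of $S\setminus S_k$.
\end{enumerate}
The first step I would try is therefore to check whether the intended setting has finite $S$, or real-valued order functions with attained infima. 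I expect the authors either to assume this implicitly or to regard the tangle as a tangle in~$\vS$ with respect to $o'$ in the sense of orienting some down-closed set determined by~$o'$. In either case the core argument is the initial-segment observation above, followed by the remark that tangle-ness depends only on the underlying set.
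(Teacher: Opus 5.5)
Your core argument is the same as the paper's: refinement gives $o'(r)<o'(s)$ whenever $r\in S_k$ and $s\notin S_k$, so $S_k$ is an initial segment of $S$ under $o'$, and choosing $k'$ between $o'(S_k)$ and $o'(S\setminus S_k)$ (your $\min_{S\setminus S_k}o'$, or any real above all $o'$-values if $S_k=S$) yields $S_k=\{\,s\in S: o'(s)<k'\,\}$; the paper likewise tacitly uses that $S$ is finite, as it does throughout that section. You should drop the `general case' paragraph rather than try to patch it, because no patch exists: for $S=\{r_0,s_1,s_2,\dots\}$ with $o(r_0)=0$, $o(s_n)=1$, $o'(r_0)=0$ and $o'(s_n)=1/n$, the function $o'$ refines $o$, yet $S_1=\{r_0\}$ is not of the form $\{\,s: o'(s)<k'\,\}$ for any $k'\in\R$ (such sets are empty or infinite), so with $\F=\emptyset$ the tangles of $S_1$ are not tangles in $\vS$ with respect to $o'$.
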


\begin{proof}
If $o'$ refines~$o$ then, for every $k\in\R$ and with $S_k$ defined with respect to~$o$ as above, the range~$o'(S_k)$ in~$\R$ is an initial segment of~$o'(S)$. For every~$k$ choose $k' \in \mathbb{R}$ larger than every element of $o'(S_k)$ but smaller than every element of $o'(S) \setminus o'(S_k)$. Then $S_k$ can be rewritten as~$\{\,s\in S\mid o'(s)<k'\}$, which puts the $\F$-tangles of~$S_k$ in~$\operatorname{tangle}(\vS, \F, o')$. 
\end{proof}

Let us consider a separation system~$\vU$ that is a universe: one whose partial ordering~$\le$ makes it into a lattice. Any two $\vr,\vs\in\vU$  thus have a supremum~$\vr\lor\vs$ and an infimum~$\vr\land\vs$ in~$\vU$. Let us call a function $u\colon \vU \to \mathbb{R}$ \emph{submodular} if
$$
u(\vr\lor\vs)+u(\vr\land\vs)\leq u(\vr)+u(\vs)
$$
for all $\vr,\vs\in\vU$, and \emph{structurally submodular} if
 $${u(\vr\lor\vs)\le u(\vr)}\quad\text{or}\quad u(\vr\land\vs)\le u(\vs)\eqno{(\dag)}$$
for all $\vr,\vs\in\vU$. Note that submodular functions on~$\vU$ are also structurally submodular, but not conversely. Unlike submodularity, the structural submodularity of a real function on~$\vU$ is preserved when we compose it with an order isomorphism on $\mathbb{R}$, but it is not preserved by summing functions.%
   \COMMENT{}

Let us call an order function~$|\ |$ on the set~$U$ of unoriented separations \emph{submodular} if the function $\vs\mapsto |s|$ it induces on~$\vU$ is submodular, and similarly for `structurally submodular'. As earlier, submodularity of order functions on~$U$ is preserved under summing (but not under composition with order isomorphisms), while structural submodularity of order functions on~$U$ is preserved under composition with order isomorphisms (but not under summing).

Note also that, given any $k\in\R$, the separation system $\vUk = \{\vs\in\vU : |s| < k\}$ is submodular, as defined at the start of \cref{sec:basics}, as soon as $|\ |$ is structurally submodular on~$\vU\!$. Conversely, if $\vS\subseteq\vU$ is submodular, one can define a structurally submodular order function on~$U$ so that $\vS = \vUk$. But there need not exist a submodular order function on~$U$ with this property; see~\cite[Example~5.11]{SARefiningInessPartsExtended}.%
   \COMMENT{}

It is not hard to refine a submodular order function to make it injective. Ideally, we would like to do this while preserving its submodularity. We shall prove a little less than this, which is still good enough for our use of submodularity: that we can refine it to an order function that is still structurally submodular.

\begin{lemma}\label{lem:inversesub}
Let $\vU$ be a separation universe with a submodular function~$u$. Then $u'\colon \vs\mapsto u(\sv)$ and $w\colon s\mapsto u(\vs) + u(\sv)$ are also submodular order functions on~$U\!$.\looseness=-1
\end{lemma}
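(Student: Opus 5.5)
The plan is to reduce everything to one structural fact: the involution ${}^*$ is a lattice anti-automorphism of~$\vU$, so it interchanges suprema and infima. Once that is established, the submodularity of~$u'$ is a one-line substitution, and the claim for~$w$ follows because sums of submodular functions are submodular.

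\emph{Step 1: De~Morgan laws.} I first show that $(\vr\lor\vs)^* = \rv\land\sv$ and $(\vr\land\vs)^* = \rv\lor\sv$ for all $\vr,\vs\in\vU$. The map ${}^*$ is an order-reversing bijection of~$\vU$ that is its own inverse, so $\vx\le\vy$ if and only if $\vy{}^*\le\vx{}^*$. Take $\vt:=\vr\lor\vs$. From $\vr,\vs\le\vt$ we get $\vt{}^*\le\rv,\sv$, so $\vt{}^*$ is a lower bound of $\{\rv,\sv\}$. If $\vx$ is any lower bound of $\{\rv,\sv\}$, then $\vr,\vs\le\vx{}^*$, hence $\vt\le\vx{}^*$, and so $\vx\le\vt{}^*$. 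Thus $\vt{}^*$ is the infimum $\rv\land\sv$. The second identity follows by applying the first to $\rv,\sv$ and then applying~${}^*$ to both sides.

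\emph{Step 2: $u'$ is submodular.} For $\vr,\vs\in\vU$, Step~1 gives
\[
u'(\vr\lor\vs)+u'(\vr\land\vs) = u(\rv\land\sv)+u(\rv\lor\sv).
\]
By the submodularity of~$u$ applied to the pair $\rv,\sv$, the right-hand side is at most $u(\rv)+u(\sv) = u'(\vr)+u'(\vs)$.

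\emph{Step 3: $w$ is a submodular order function on~$U$.} The function $\vs\mapsto u(\vs)+u(\sv)$ on~$\vU$ equals $u+u'$. Adding the two submodularity inequalities for $u$ and~$u'$ shows that this sum is submodular. It also takes the same value on $\vs$ and~$\sv$, so it is well defined as a function of the unoriented separation~$s$. Hence it is an order function on~$U$, and by definition it is submodular, since the function it induces on~$\vU$ is exactly $u+u'$.

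The only step with any real content is Step~1, namely checking carefully that an order-reversing involution carries suprema to infima and vice versa. The remaining steps are routine bookkeeping.
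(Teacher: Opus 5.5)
Your proof is correct and takes the paper's route: the substitution $u'(\vr\lor\vs)+u'(\vr\land\vs)=u(\rv\land\sv)+u(\rv\lor\sv)\le u(\rv)+u(\sv)$, and then submodularity of $w=u+u'$ as a sum of submodular functions. You also prove the De~Morgan identities that the paper uses silently in its first equality, and you check that $w$ is well defined on unoriented separations; the paper leaves both details implicit.
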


\begin{proof} Since $u$ is submodular, we also have
    $$u'(\vs \lor \vr) + u'(\vs \land \vr) = u(\sv \land \rv) + u(\sv \lor \rv) \leq u(\sv) + u(\rv) = u'(\vs) + u'(\vr) \text{,}$$%
   \COMMENT{}
so $u'$ is submodular. The sum of two submodular functions is clearly  submodular, hence so is also $w = u + u'$. 
\end{proof}

\begin{lemma}\label{sub:1} Let $\vU$ be a separation system and $\vt\! \in \vU\!$. The function ${\mathbf{1}\!_{\vt}\colon \vU \to \{0, 1\}}$ given by
$$
\mathbf{1}\!_{\vt}(\vs) = 
\begin{cases}
    0\,\text{ if } \vs \leq \vt,\\
    1\,\text{ otherwise.}
\end{cases}
$$ is submodular.
\end{lemma}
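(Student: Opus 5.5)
We need $\mathbf{1}_{\vt}(\vr\lor\vs)+\mathbf{1}_{\vt}(\vr\land\vs)\le\mathbf{1}_{\vt}(\vr)+\mathbf{1}_{\vt}(\vs)$ for all $\vr,\vs\in\vU$. Here $\vU$ has to be a universe for $\lor$ and $\land$ to make sense, so we read the lemma with that standing assumption. The plan is a case distinction on the value of the right-hand side, which is $0$, $1$ or~$2$. The left-hand side is at most~$2$, so the case where the right-hand side equals~$2$ needs no argument. Two lattice facts carry the proof: $\vr\land\vs\le\vr,\vs\le\vr\lor\vs$, and the defining property of the supremum, that $\vr\le\vt$ and $\vs\le\vt$ together imply $\vr\lor\vs\le\vt$.

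\emph{Right-hand side $0$.} Then $\vr\le\vt$ and $\vs\le\vt$. By the supremum property, $\vr\lor\vs\le\vt$. By transitivity, $\vr\land\vs\le\vr\le\vt$. So both terms on the left are~$0$.

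\emph{Right-hand side $1$.} By symmetry we may assume $\vr\le\vt$ and $\vs\not\le\vt$. Then $\vr\land\vs\le\vr\le\vt$, so $\mathbf{1}_{\vt}(\vr\land\vs)=0$, and the left-hand side is at most $\mathbf{1}_{\vt}(\vr\lor\vs)\le 1$.

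Nothing here is a real obstacle. The only point to watch is that the argument uses just the lattice order. It never uses the involution or the distributivity of~$\vU$, so the lemma holds in any universe. The resulting function is a submodular function $\vU\to\R$ in the sense defined above; it is not claimed to be an order function on~$U$, since it need not agree on $\vs$ and~$\sv$.
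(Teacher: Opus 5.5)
Your proof is correct and is essentially the paper's argument. Both split on the value of the right-hand side, use $\vr\land\vs\le\vr\le\vt$ to make the infimum term vanish, and use the supremum property when $\vr,\vs\le\vt$. You are also right that $\vU$ must be a universe for $\lor$ and $\land$ to be defined, even though the statement only says ``separation system''.
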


\begin{proof}
Let $\vr, \vs \in \vU$. We have to show that $$\mathbf{1}\!_{\vt}(\vr \land \vs) + \mathbf{1}\!_{\vt}(\vr \lor \vs) \leq \mathbf{1}\!_{\vt}(\vr) + \mathbf{1}\!_{\vt}(\vs) \text{.}$$
If the right-hand side of the inequality is~$2$, the inequality holds trivially. So let us assume that $\mathbf{1}\!_{\vt}(\vr) = 0$. Then $\vr \land \vs \leq \vr \leq \vt$, so $\mathbf{1}\!_{\vt}(\vr \land \vs) = 0$. Hence the left-hand side of the inequality is at most~$1$. The inequality thus holds unless the right-hand side is zero, i.e., unless also $\mathbf{1}\!_{\vt}(\vs) = 0$. But then also $\vs \leq \vt$ and thus $\vr \lor \vs \leq \vt$, so $\mathbf{1}\!_{\vt}(\vr \lor \vs) = 0$, making the left-hand side zero too.
\end{proof}

It may help to think of the sums in our next lemma as $n$-ary integer expansions.%
   \COMMENT{} 

\begin{lemma}\label{sub:2} Let $\vU$ be a separation universe, $\iota\colon \vU\to\{0,\dots,|\vU|-1\}$ a bijection, and $n \in \N$. Then
$$
\gamma_n^{\iota}(\vs):=\!\!\!\!\!\sum_{\vt\!\in \vU, \vt\ngeq\vs}\!\!\!\!\!\!\!\! n^{\iota(\!\vt\!)}\> = \sum_{\vt \in \vU} n^{\iota(\!\vt\!)} \mathbf{1}\!_{\vt}(\vs).
$$
In particular $\gamma_n^{\iota}$ is submodular. 
\end{lemma}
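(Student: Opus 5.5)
The lemma has two parts: an identity between the two sums, and the submodularity of~$\gamma_n^{\iota}$. I would prove them in that order and expect neither to be hard.

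For the identity, I would compare the two sums term by term. By \cref{sub:1}, $\mathbf{1}\!_{\vt}(\vs)$ equals~$1$ exactly when $\vs\not\le\vt$, that is, when $\vt\ngeq\vs$, and equals~$0$ otherwise. So in the sum $\sum_{\vt\in\vU} n^{\iota(\vt)}\mathbf{1}\!_{\vt}(\vs)$ the terms that survive are precisely those with $\vt\ngeq\vs$, each with coefficient~$n^{\iota(\vt)}$. This is exactly the first sum defining $\gamma_n^{\iota}(\vs)$. Nothing about~$\iota$ or~$n$ is used here beyond the fact that they define the weights.

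For submodularity, the plan is to note that the weights $n^{\iota(\vt)}$ are non-negative reals. A non-negative scalar multiple of a submodular function on~$\vU$ is again submodular, since the defining inequality
$$u(\vr\lor\vs)+u(\vr\land\vs)\le u(\vr)+u(\vs)$$
is preserved under multiplication by a non-negative constant. Finite sums of submodular functions are submodular, as already used in the proof of \cref{lem:inversesub}. Each $\mathbf{1}\!_{\vt}$ is submodular by \cref{sub:1}. Hence the finite non-negative combination $\gamma_n^{\iota}=\sum_{\vt} n^{\iota(\vt)}\mathbf{1}\!_{\vt}$ is submodular on~$\vU$.

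The only point needing care is finiteness. The bijection $\iota\colon\vU\to\{0,\dots,|\vU|-1\}$ forces $\vU$ to be finite, so the sum is finite and all manipulations are legitimate. I would also record that the choice of~$n$ (including the degenerate case $n=0$, where one can read $0^0=1$) does not affect the argument, because only the non-negativity of the coefficients is used. The injectivity features that presumably motivate the base-$n$ weights are irrelevant for this lemma and will matter only in later uses.
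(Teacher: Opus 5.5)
Your proposal is correct and follows essentially the same route as the paper. The identity holds because $\mathbf{1}\!_{\vt}(\vs)=0$ exactly when $\vt\ge\vs$, and submodularity follows from \cref{sub:1} together with the fact that finite sums of submodular functions are submodular; your explicit remark that non-negative scalar multiples preserve submodularity fills in a step the paper leaves implicit.
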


\begin{proof} The first part is immediate, as the additional summands in the second sum are all zero.%
   \COMMENT{}
The submodularity of~$\gamma_n^{\iota}$ then follows from \cref{sub:1} and the fact that sums of submodular functions are again submodular.%
   \COMMENT{}
\end{proof}

\begin{lemma}\label{lem:ordertoordering}
Let $\vU$ be a separation universe with a submodular order function~$o$. Then there exists an injective submodular order function~$o'\!$ on~$U\!$ that refines~$o$.
\end{lemma}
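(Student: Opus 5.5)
The plan is to obtain~$o'$ as a large multiple of~$o$ plus a small injective perturbation that is itself submodular. I would build the perturbation from the functions~$\gamma_n^{\iota}$ of \cref{sub:2} and make it symmetric using \cref{lem:inversesub}. As in \cref{sub:2}, I assume $\vU$ is finite, which is needed anyway for the bijection~$\iota$.

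\textbf{Step 1 (the perturbation).} Fix a bijection $\iota\colon\vU\to\{0,\dots,|\vU|-1\}$ and put $u:=\gamma_3^{\iota}$, which is submodular by \cref{sub:2}. By \cref{lem:inversesub}, the function
$$w\colon s\mapsto u(\vs)+u(\sv)=\sum_{\vt\in\vU}3^{\iota(\vt)}\bigl(\mathbf{1}\!_{\vt}(\vs)+\mathbf{1}\!_{\vt}(\sv)\bigr)$$
is a well-defined submodular order function on~$U$; it is well defined because it is symmetric in $\vs$ and~$\sv$.

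\textbf{Step 2 (injectivity of $w$).} This is the step that carries the real content. Each coefficient $c_{\vt}(s):=\mathbf{1}\!_{\vt}(\vs)+\mathbf{1}\!_{\vt}(\sv)$ lies in $\{0,1,2\}$, so $w(s)$ is a base-$3$ expansion. Hence $w(s)$ determines the whole vector $(c_{\vt}(s))_{\vt\in\vU}$, and in particular it determines the set
$$D_s:=\{\vt : c_{\vt}(s)\le 1\}=\{\vt:\vt\ge\vs\}\cup\{\vt:\vt\ge\sv\},$$
the up-closure of $\{\vs,\sv\}$ in~$\vU$. I would then recover~$s$ from~$D_s$ by a short case check on its $\le$-minimal elements:
\begin{itemize}
\item if $\vs$ and~$\sv$ are incomparable, the minimal elements of~$D_s$ are exactly $\vs$ and~$\sv$;
\item if they are comparable (including the degenerate case $\vs=\sv$), the unique minimal element is one of them.
\end{itemize}
In either case the minimal elements of~$D_s$ include an orientation of~$s$, and every minimal element is an orientation of~$s$. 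So $D_s$ determines~$s$, and therefore $w(r)=w(s)$ implies $r=s$.

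\textbf{Step 3 (combining).} Let $\delta>0$ be the least positive value of $o(s)-o(r)$ over $r,s\in U$; this minimum exists because $U$ is finite. If no positive value occurs, $o$ is constant and we take $o':=w$. Otherwise choose $M>\max_{r,s\in U}|w(r)-w(s)|/\delta$ and set
$$o':=M\cdot o+w.$$
\begin{itemize}
\item \emph{Submodularity.} $M\cdot o$ is submodular because $M>0$, and sums of submodular functions are submodular, so $o'$ is submodular.
\item \emph{Refinement.} If $o(r)<o(s)$, then $M\bigl(o(s)-o(r)\bigr)\ge M\delta>w(r)-w(s)$, so $o'(r)<o'(s)$.
\item \emph{Injectivity.} If $o'(r)=o'(s)$, then $o(r)=o(s)$ by the refinement property, hence $w(r)=w(s)$, hence $r=s$ by Step~2.
\end{itemize}

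The main obstacle is Step~2. The base must exceed~$2$ so that the digit vectors are read off uniquely, which is why I take $n=3$. One must also check that the up-closure $D_s$ pins down~$s$ even when $s$ is small or degenerate; the case distinction on comparability of $\vs$ and~$\sv$ above is meant to handle exactly this.
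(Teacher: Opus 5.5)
Your proof is correct and follows the paper's route. It uses the same submodular base-$3$ perturbation $\gamma_3^{\iota}(\vs)+\gamma_3^{\iota}(\sv)$ (via \cref{lem:inversesub} and \cref{sub:2}), whose digits record which separations point towards~$s$, and adds it to~$o$ so that it only breaks ties. The differences are minor and both valid. You recover $s$ from the minimal elements of the up-closure of $\{\vs,\sv\}$ rather than from the paper's alternating chain of orientations of $s$ and~$r$, which is arguably cleaner. You also scale $o$ up by~$M$ instead of scaling the perturbation down by $\frac12\varepsilon/3^{|\vU|}$, which is equivalent.
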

\begin{proof}
We shall construct a small injective and submodular perturbation $\delta\colon U\to\mathbb{R}$, so that $o':=o+\delta$ is injective and refines~$o$.

First choose $\varepsilon>0$ as follows. Let $\Delta := \{|o(s)-o(r)| : s,r\in U,\ o(s)\neq o(r)\}$. If $\Delta\neq\emptyset$ set $\varepsilon:=\min\Delta>0$; otherwise set $\varepsilon:=1$. In either case $\varepsilon>0$.

Fix an arbitrary bijection $\iota\colon \vU\to\{0,\dots,|\vU|-1\}$ and set $\gamma(s):=\gamma_3^{\iota}(\vs)+\gamma_{3}^{\iota}(\sv)$. By \cref{lem:inversesub} and \cref{sub:2} the function $\gamma\colon U \to \mathbb{R}$ is submodular. We claim $\gamma$ is injective. Write $\gamma(s)$ in base~$3$:
$$
\gamma(s)=\sum_{k=0}^{|\vU|-1} a_k(s)\,3^k,\qquad a_k(s)\in\{0,1,2\}.
$$
By construction we have $a_k(s) \leq 1$ if and only if the separation $\vt = \iota^{-1}(k)$ points towards~$s$. If $\gamma(s)=\gamma(r)$ then their base-$3$ expansions coincide, hence every $\vt\in \vU$ points towards~$s$ if and only if it points towards~$r$. From this we will now deduce $s=r$, showing that $\gamma$ is injective.

Let $\vso$ be any orientation of~$s$. Suppose that, for some $n \in \N$, we have defined an orientation~$\vsn$ of~$s$. Then since $\vt := \vsn$ points towards~$s$, it also points towards~$r$. So there exists an orientation~$\vrn$ of~$r$ with $\vsn \geq \vrn$. Analogously, given any orientation~$\vrn$ of~$r$ we can find an orientation~$\vsnplusone$ of~$s$ with $\vrn \geq \vsnplusone$. For any orientation~$\vso$ of~$s$ we can thus construct an infinite chain $\vso \geq \vro \geq \vsone \geq \dots$ of, alternately, orientations of $s$ and $r$. As $r$ and $s$ have at most two orientations each, at most two of these inequalities can be strict. Hence for $m$ large enough we have $\vsm = \vrm$, and therefore $s = r$ as desired. This completes our proof that $\gamma$ is injective.

Note that $0\le \gamma(s)<3^{|\vU|}$ for every $s$. Define
$$
\delta(s):=\big({\textstyle \frac12}\varepsilon/3^{|\vU|}\big)\,\gamma(s).
$$
Then $0 \leq \delta(s)<\varepsilon/2$ for all~$s$. Note that $\delta$ is injective and submodular, because $\gamma$~is and ${\textstyle \frac12}\varepsilon/3^{|\vU|} > 0$. 

Set $o':=o+\delta$. As a sum of submodular functions, $o'$~is submodular. To see that $o'$~is injective, suppose $o'(r)=o'(s)$. Then
$$
|o(r)-o(s)| = |\delta(r)-\delta(s)|\leq |\delta(r)|+|\delta(s)|<\varepsilon.
$$
By the choice of $\varepsilon$ we must have $o(s)=o(r)$, and hence $\delta(s)=\delta(r)$. Since $\delta$ is injective this gives $s=r$. Finally, $o'$~refines~$o$, because whenever $o(r) > o(s)$ we have\looseness=-1
$$
o'(r)-o'(s)=o(r)-o(s)+\delta(r)-\delta(s)\geq \varepsilon-|\delta(r)|-|\delta(s)|>0\,,
$$
so $o'(r)>o'(s)$.
\end{proof}

An {\em enumeration\/} of a set~$S$ is any bijection between $S$ and~$\{1,\dots,|S|\}$.

\begin{lemma}\label{lem:newordertoordering}
    Let $\vU\!$ be a separation universe with a submodular order function~$o$. Then there exists a structurally submodular enumeration of~$U\!$ that refines~$o$. 
\end{lemma}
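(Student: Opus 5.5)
The plan is to obtain the enumeration from the injective submodular refinement of \cref{lem:ordertoordering}, composed with the order isomorphism onto $\{1,\dots,|U|\}$. First apply \cref{lem:ordertoordering} to $o$. This gives an injective submodular order function $o'$ on~$U$ that refines~$o$. Since $U$ is finite (implicitly so, as $\vU$ is enumerated in \cref{sub:2}) and $o'$ is injective, there is a unique order-preserving bijection $\phi\colon o'(U)\to\{1,\dots,|U|\}$. Let $e:=\phi\circ o'$. This $e$ is a bijection $U\to\{1,\dots,|U|\}$, hence an enumeration of~$U$.

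Next check that $e$ refines~$o$. If $o(r)<o(s)$, then $o'(r)<o'(s)$ because $o'$ refines~$o$. Since $\phi$ is strictly increasing, this gives $e(r)<e(s)$.

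The remaining step is structural submodularity, which is where the only subtlety lies: submodularity itself is not preserved under composition with $\phi$, so we must pass through the weaker notion. The function $\vs\mapsto o'(s)$ on~$\vU$ is submodular, and hence structurally submodular, as noted after the definition of~$(\dag)$. Explicitly, for $\vr,\vs\in\vU$, if both $o'(\vr\lor\vs)>o'(\vr)$ and $o'(\vr\land\vs)>o'(\vs)$ held, summing would contradict submodularity. So $(\dag)$ holds for~$o'$.

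Finally, $(\dag)$ consists of non-strict inequalities between values of the function. These are preserved when we apply the strictly increasing map $\phi$, which is defined on all values occurring since every element of $\vU$ has its value in $o'(U)$. Therefore $e$ satisfies $(\dag)$, and so $e$ is a structurally submodular enumeration of~$U$ refining~$o$.
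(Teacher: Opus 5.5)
Your proposal is correct and follows essentially the same route as the paper: compose the injective submodular refinement $o'$ from \cref{lem:ordertoordering} with the order isomorphism from its range onto $\{1,\dots,|U|\}$, and use that submodularity implies structural submodularity, which survives composition with order isomorphisms. Your explicit checks that summing the two strict inequalities would contradict submodularity, and that the refinement property is inherited, fill in the details the paper leaves to the reader.
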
%
   \COMMENT{}

\begin{proof} Compose the injective submodular refinement~$o'$ of~$o$ provided by \cref{lem:ordertoordering} with an order isomorphism $\varphi$ from its range to $\{1, \dots, |U|\}$. Since~$o'$, being submodular, is also structurally submodular, and structural submodularity is preserved under composition with order isomorphisms, the enumeration~$\varphi\circ o'$ of~$U$ is structurally submodular, and like~$o'$ it refines~$o$.
\end{proof}

\medskip\section{Trees of tangles}\label{sec:ToTs}

\noindent
   In \cref{sec:TTD} we derived one of the two fundamental types of theorem about tangles from our tangle structure trees: a~so-called tangle-tree duality theorem. In this section we shall do the same for the second pillar of tangle theory: its so-called tree-of-tangles theorems.

Let $\vS$ be an ordered separation system. Recall from \cref{sec:basics} that a separation {\em distinguishes\/} two tangles in~$\vS$ if they orient it differently. Let us say that it distinguishes them {\em optimally\/}%
   \footnote{\lineskiplimit = -3pt The usual terminology is to say `efficiently' rather than `optimally'. We use the latter only in this paper, to avoid confusion with our use of the term `efficient' for subsets of~$\vS$.}
   if it distinguishes them but no separation in~$S$ of lower order does. A~nested set~$N\subseteq S$ is a {\em tree\,%
   \footnote{Recall that nested sets of separations of structures such as sets or graphs divide them in a `tree-like' way~\cite{TreeSets}; hence the `tree' in the name.}\,%
   of the tangles of~$S$} if every two $\F$-tangles of~$S$ (for some given~$\F$) are distinguished optimally by a separation in~$N\!$. Likewise, $N$~is a {\em tree of the tangles in~$\vS$} if it is nested and every two maximal $\F$-tangles in~$\vS$ are distinguished optimally by a separation in~$N\!$.

\begin{definition}\label{def:robust}
Let $\vS$ be a separation system%
   \COMMENT{}
   contained in a universe~$\vU$. An ori\-entation~$\tau$ of~$S$ is {\em robust\/} in~$\vU$ if for no $s\in U$ there is a triple $\{\vr,\rv\lor\vs,\rv\lor\sv\}$ in~$\tau$%
   \COMMENT{}
   with $|\rv\lor\vs|, |\rv\lor\sv| < |r|$.%
   \footnote{If this definition appears technical, think of orientations of bipartitions of a set as designating the side they point to as~`big'. Robust orientations of set partitions are then like a finite analogue to ultrafilters: big subsets never split into two small subsets.}
\end{definition}

\noindent
   These triples will be illustrated in \cref{fig:cross}. Note that while~$\vr$ can lie in such a triple in~$\tau$ only if it lies in~$\vS$, the separation~$s$ need not lie in~$S$: it just gives rise to that triple in~$\vS$ together with~$\vr\in\vS$.

\medbreak

We shall find in every separation system~$\vS$ that lives in some universe with an injective submodular order function a tree of all the robust $\F$-tangles of~$S$, as well as a more comprehensive tree of all the robust $\F$-tangles in~$\vS$.%
   \COMMENT{}
   Here, $\F$ has to be standard and rich, as usual, but need not satisfy any further requirements. But to ease our terminology, we shall encode the robustness assumption in~$\F$, by assuming that $\F$ contains all the triples from \cref{def:robust}. We can then simply speak of (trees of) $\F$-tangles rather than of robust $\F$-tangles.%
   \COMMENT{}

The most comprehensive tree-of-tangles theorems known so far are for $\F$-tangles that are not only robust but also {\em profiles\/}: consistent orientations of a separation system~$\vS$ that contain no triple of the form ${\{\vr,\vs,\rv\lor\sv\}}$, where the supremum is once more taken in some fixed universe of separations containing~$\vS$~\cite{ProfilesNew}.

Tangles of graphs are robust profiles~\cite{DiestelBook25}.%
   \footnote{Indeed, all tangles of abstract separation systems, as defined in~\cite{AbstractTangles}, are profiles (see there).%
   \COMMENT{}
  If the ambient universe of their separation system is distributive, they are easily seen to be robust.%
   \COMMENT{}
   Graph tangles are examples of such abstract tangles in a distributive universe.}
   So the tree-of-tangles theorems for robust profiles of abstract separation systems from~\cite{ProfilesNew} generalize the tree-of-tangles theorem for graphs of Robertson and Seymour~\cite{GMX}, and our results below will further generalize those from~\cite{ProfilesNew} to $\F$-tangles that are robust but need not be profiles.\looseness=-1 %
   \COMMENT{}

There is another advance in~\cite{ProfilesNew} over~\cite{GMX}: the trees of tangles found in~\cite{ProfilesNew} are {\it canonical\/}. This means, roughly, that the automorphisms of the given graph or separation system~$\vS$ leave the tree of tangles $N\subseteq S$ invariant. This is usually proved by observing that the construction of~$N$ depends only on invariants of the graph or separation system rather than, say, of an enumeration of~$\vS$ assumed for the sake of the construction, as was the case for the first trees of tangles for graphs in~\cite{GMX}. As a consequence, such a canonical tree of tangles can then be computed by an algorithm that yields the same result regardless of the order in which the elements of the graph or separation system are presented to it.

Our tree-of-tangles theorems for all robust $\F$-tangles, \cref{thm:ToT,thm:ToTinS} below, requires that the order function given on~$S$ is injective. This assumption makes canonicity, as defined above,%
   \COMMENT{}
   trivial: unless $|S|=1$,%
   \COMMENT{}
   the identity will be the only automorphism of~$\vS$ as an {\em ordered\/} separation system.%
   \COMMENT{}
As we saw in \cref{sec:orderfunctions}, we can make a given submodular order function injective without losing its submodularity. But this conversion is not canonical:%
   \COMMENT{}
   it depends on choices not determined by the invariants of the separation system considered.%
   \footnote{In our case, this was the choice of the bijection~$\iota$ in the proof of \cref{lem:ordertoordering}.}
   If we ignore the order function on our separation system and refer in our notion of canonicity only to the unordered system, as a poset with an order-reversion involution, then the tree of tangles we shall obtain in this section will not be canonical: it will depend on the choice of (injective) order function on~$S$, because the (thoroughly ordered) tangle structure tree from which we construct it depends on it.

\medbreak

The following definition allows us to encode the robustness of  our tangles in~$\F$. Let $\vS$ be a separation system in a universe~$\vU\!$ of separations. Let
 $$\RR(\vU) := \big\{\{\vr,\rv\lor\vs,\rv\lor\sv\}\subseteq\vU :\, r,s\in U\text{ and } |\rv\lor\vs|, |\rv\lor\sv| < |r| \big\}.$$
These triples are easily identified in \cref{fig:cross}, for example.

\medbreak

   Here is our basic tree-of-tangles theorem for robust $\F$-tangles with arbitrary~$\F$, or equivalently, for all $\F$-tangles with arbitrary $\F$ that include~$\RR(\vU)$:

\begin{theorem}\label{thm:ToT}
Let $\vU\!$ be a universe of separations with an injective and structurally submodular order function. Let $k\in\R$ and $\vS:=\vUk$. Let $\F$ be any set that is rich%
   \COMMENT{}
   and standard for~$\vS\!$ and includes~$\RR(\vU)$. Let~$(T,r,\beta)$ be the unique thoroughly ordered $\F$-tangle structure tree for~$\vS$ (cf.\ \cref{thm:generalduality}). Let~$N\!$ be the set of all separations $s_v\in S$ with $v$ a tangle node of~$T$.\par
    Then $N\!$~is a tree of the tangles of~$S$; in particular, it is nested. It consists of all%
   \COMMENT{}
   the separations in~$S$ that distinguish two $\F$-tangles of~$S$ optimally.
\end{theorem}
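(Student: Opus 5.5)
We shall prove the statement by showing two inclusions between $N$ and the set $D$ of separations in~$S$ that distinguish some two $\F$-tangles of~$S$ optimally. Nestedness of~$N$ will be derived separately, and it is the only place where robustness and structural submodularity enter.

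\emph{Step 1: tangle nodes distinguish optimally, so $N\subseteq D$.}
Let $v$ be a tangle node with two children. By \cref{thm:display}, the distinct $\F$-tangles $\tau,\tau'$ sit at tangle leaves $\ell(\tau),\ell(\tau')$ whose $<_r$-infimum is~$v$. These tangles orient $s_v$ differently, so $s_v$ distinguishes them.

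Suppose some $x\in S$ with $|x|<|s_v|$ also distinguished them. Since $T$ is thoroughly ordered, $s_v$ has minimum order among the separations not oriented by $\lfloor\beta_v\rfloor$. Hence $x$ is oriented by $\lfloor\beta_v\rfloor$. But $\lfloor\beta_v\rfloor\subseteq\lfloor\beta_{\ell(\tau)}\rfloor\cap\lfloor\beta_{\ell(\tau')}\rfloor=\tau\cap\tau'$ by \cref{lem:closure}, so both tangles orient $x$ the same way, a contradiction.

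A tangle node with one child is harmless. Its unique child orients $s_v$ in a way forced by consistency or by~$\F$, and $s_v$ distinguishes no tangles. I would check that such $s_v$ either does not arise or still distinguishes optimally. Most likely the definition intends $N$ to use tangle nodes with two children, that is, infima of pairs of tangle leaves, and I would read it that way.

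\emph{Step 2: every optimal distinguisher lies in~$N$, so $D\subseteq N$.}
Let $x$ distinguish $\tau,\tau'$ optimally, and let $v$ be the infimum of $\ell(\tau),\ell(\tau')$. By Step~1, $s_v$ distinguishes them optimally, so $|s_v|=|x|$. Injectivity of the order function now gives $x=s_v\in N$. The same argument shows that every pair of tangles is distinguished optimally by its infimum separation, which is an element of~$N$. So once nestedness holds, $N$ is a tree of the tangles.

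\emph{Step 3: $N$ is nested. This is the main obstacle.}
Suppose $s_u,s_v\in N$ cross, with $|s_u|<|s_v|$, so $u\ne v$. Let $\tau_1,\tau_2$ be tangles distinguished optimally by $s_v=:s$, and $\tau_3,\tau_4$ tangles distinguished optimally by $s_u=:r$.

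The plan is the standard uncrossing argument. Consider the four corners $\vr\lor\vs$, $\vr\lor\sv$, $\rv\lor\vs$, $\rv\lor\sv$. Structural submodularity (\dag) applied to opposite pairs of corners shows that for suitable orientations one corner has order $<|s|$ (or $\le$, made strict by injectivity), or that two corners have order below $|r|$.

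In the first case, say $|\vr\lor\vs|<|s|$ after renaming. The tangles $\tau_1,\tau_2$ agree on all separations of order $<|s|$ and also on~$r$, since $|r|<|s|$. Since $\tau_1\ni\vs$ and $\tau_2\ni\sv$, consistency or robustness forces them to orient this corner differently. That gives a distinguisher of lower order than~$s$, contradicting optimality.

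In the second case, the two corners $\rv\lor\vs$ and $\rv\lor\sv$ both have order below~$|r|$. For $\tau_3\ni\vr$, say, the triple $\{\vr,\rv\lor\vs,\rv\lor\sv\}\in\RR(\vU)\subseteq\F$ cannot lie in $\tau_3$. So $\tau_3$ contains the inverse of one corner, and consistency then makes that corner a lower-order distinguisher of $\tau_3,\tau_4$.

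The delicate part is the case analysis: matching which tangle contains which orientation with the corner orders guaranteed by (\dag). The assumption $\F\supseteq\RR(\vU)$ is there precisely to replace the profile property used in~\cite{ProfilesNew}. I expect to need the triple for both $r$ and~$s$ in the roles of the triple's first element, and to use that corners of order $<k$ lie in~$\vS$.
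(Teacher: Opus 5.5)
Steps~1 and~2 are sound; they amount to the paper's \cref{lem:inf,lem:N}. Your caveat about one-child tangle nodes is fair: their $s_v$ is degenerate and distinguishes nothing, and the paper tacitly treats only infima of two tangle leaves.

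\textbf{The gap is in Step~3, Case~1.} A single corner of order $<|s|$ does not force $\tau_1,\tau_2$ to differ. Say both contain $\vr$, with $\vs\in\tau_1$ and $\sv\in\tau_2$.
\begin{itemize}
\item Both then contain $\vr\lor\vs$ and $\vr\lor\sv$ by consistency, so they agree on the corners $\rv\land\sv$ and $\rv\land\vs$.
\item On the corner $\vr\land\vs$, consistency puts $\rv\lor\sv$ (which lies above~$\sv$) into~$\tau_2$. But nothing puts $\vr\land\vs$ into~$\tau_1$: the set $\{\vr,\vs,\rv\lor\sv\}$ is consistent, and excluding it is exactly the profile property this theorem is designed to do without. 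The corner $\vr\land\sv$ behaves symmetrically.
\item Robustness does not help with only one low-order corner. A triple in $\RR(\vU)$ needs two adjacent corners of low order on the same side of its first element.
\end{itemize}
So the dichotomy you extract from $(\dag)$, ``one corner of order $<|s|$, or two corners of order $<|r|$'', is true, but its first branch is too weak to conclude.

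\textbf{How to repair it.} You need the sharper corner lemma, the paper's \cref{lem:cross}: for crossing $r,s$ there is $x\in\{r,s\}$ with two adjacent corners on the same side of~$x$, both of order $<|x|$.
\begin{itemize}
\item It follows from $(\dag)$ applied to every ordered pair $(t,t')$ of opposite corners, giving $|t|\le|r|$ or $|t'|\le|s|$. Add injectivity and the fact that corners of crossing separations differ from $r$ and~$s$.
\item The case to rule out is two low corners on one side of~$r$ whose orders lie strictly between $|r|$ and~$|s|$.
\end{itemize}
Given this lemma, your Case~2 argument finishes the proof when run for~$x$ and a pair of tangles that $x$ distinguishes optimally. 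For $x=s$, with $\tau_1,\tau_2$, this is exactly where the triple $\{\vs,\sv\lor\vr,\sv\lor\rv\}$ is needed.

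\textbf{Comparison with the paper.} With that repair your route is a valid alternative. You derive the contradiction directly from optimal distinguishers. The paper runs the same argument inside~$T$ via critical nodes (\cref{lem:notcritical,lem:critical}), using thorough ordering to place the two low-order corners in $\ucl(\beta_v)$. It does so because it reuses that formulation in \cref{thm:ToTinS}, where only one of the two nodes need lie in the relevant tree~$T_i$.
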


\begin{proof}
We shall build up our proof of \cref{thm:ToT} as a sequence of a few lemmas. Let $\vU$, $k$, $\vS$, $\F$, $(T,r,\beta)$ and~$N$ be as stated in the theorem, with an injective and structurally submodular order function on~$U$.

The {\em corners\/} of two separations $r,s\in S$ are the four separations in~$U$ that have orientations of the form~$\vr\land\vs$. \cref{fig:cross} shows a corner of two bipartitions of a set, where $\vr\land\vs$ is the intersection of the subsets to which $\vr$ and~$\vs$ point.

\begin{figure}[ht]
 \center\vskip-6pt
   \includegraphics[scale=1]{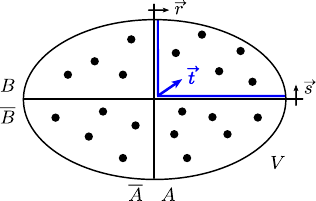}
\vskip-6pt
\caption{\small A corner $\vt = \vr\land\vs$ of two bipartitions of a set~$V\!$.}
\label{fig:cross}
\end{figure}

Two corners $t$ and~$t'$ of~$r$ and~$s$ lie {\em on the same side of~$r$\/} if these separations have orientations $\vr\ge\vt,\vtdash$. Corners on the same side of one of two crossing separations are {\em adjacent\/}; non-adjacent corners are {\em opposite\/}. Since our order function on~$U$ is structurally submodular, all four ordered pairs~$(t,t')$ of opposite corners satisfy $|t|\le|r|$ or~$|t'|\le|s|$.

In \cref{fig:cross}, the triples $\{\vr,\rv\lor\vs,\rv\lor\sv\}$ from the definition of $\RR(\vU)$ consist of $\vr$ and the two corners on the right-hand side of~$r$ both oriented towards~$r$, the last element of the triple being $\tv = \rv\lor\sv$.

\begin{lemma}\label{lem:cross}
Any two crossing separations $r,s\in S$ have two adjacent corners that either lie on the same side of~$r$ and have order~$<|r|$ or lie on the same side of~$s$ and have order~$<|s|$.
\end{lemma}

\begin{proof}
Let $|r|\le|s|$. If all four corners have order~$<|s|$ we are done; let $t_1$ be a corner of order $\ge |s|$. As $r$ and~$s$ cross we have $r\ne s\ne t_1$, so in fact $|r| < |s| < |t_1|$.%
   \COMMENT{}

By structural submodularity, the corner~$t_2$ opposite~$t_1$ has order~$\le|r|$, and hence order~$<|r|$, since $t_2\ne r$ as earlier.%
   \COMMENT{}
   If the corner~$t_3$ on the same side of~$r$ as~$t_2$ also has order~$<|r|$ we are done, so we assume not. Then $|t_3|>|r|$, and its opposite corner~$t_4$ satisfies~$|t_4|<|s|$. Since $t_4$ lies on the same side of~$s$ as~$t_2$, and $|t_2| < |r| < |s|$, this completes the proof.
\end{proof}

\begin{lemma}\label{lem:inf}
{\rm (i)} If $s\in S$ distinguishes two tangles~$\tau,\tau'$ of~$S$ optimally then $s=s_v$, where $v$ is the tangle node of~$T$ that is the infimum in~$T\!$ of the tangle leaves $\ell = \ell(\tau)$ and $\ell' = \ell(\tau')$.%
   \footnote{See \cref{thm:display} for the definition of~$\ell(\tau)$ etc.}%
   \COMMENT{}

{\rm (ii)} Conversely, if the tangle node~$v\!$ of~$T$ is the infimum in~$T\!$ of two tangle leaves $\ell = \ell(\tau)$ and~${\ell'\! = \ell(\tau')}$, then $s_v$ distinguishes~$\tau$ and~$\tau'$ optimally.
\end{lemma}

\begin{proof}
(i) By definition of~$v$, the tangles $\tau = \ucl(\beta_\ell)$ and $\tau' = \ucl(\beta_{\ell'})$ both include~$\ucl(\beta_v)$, but they differ on~$s_v$. Since~$s$ distinguishes $\tau$ and~$\tau'$ optimally, we thus have $|s| \le |s_v|$, while $s$ is not oriented by~$\ucl(\beta_v)$.%
   \COMMENT{}
   As $T$ is thoroughly ordered, this implies $|s|\ge |s_v|$,%
   \COMMENT{}
   and hence $s=s_v$ since $|\ |$ is injective.

(ii) The separation~$s_v$ distinguishes $\tau$ from~$\tau'$ by definition of~$v$. Since $T$ is thoroughly ordered, any $s\in S$ with $|s| < |s_v|$ has an orientation~$\vs$ in~$\ucl(\beta_v)$, which is a subset of both $\tau = \ucl(\beta_\ell)$ and~$\tau' = \ucl(\beta_{\ell'})$. Hence $\tau(s)=\tau'(s) = \vs$, so $s$ does not distinguish $\tau$ from~$\tau'$.
\end{proof}

\begin{lemma}\label{lem:N}
$N\!$~consists of all the separations in~$S$ that distinguish two tangles~of~$S$ optimally.
\end{lemma}

\begin{proof}
Immediate from \cref{lem:inf} and the definition of~$N$.
\end{proof}

Let us call a node~$v$ of~$T$ {\em critical\/} if $s_v$ has an orientation~$\vsv$ that is either co-trivial in~$\vS$ or such that $\ucl(\beta_v)\cup\{\vsv\}$ contains a triple from~$\RR(\vU)$.

\begin{lemma}\label{lem:notcritical}
Tangle nodes of~$T\!$%
   \COMMENT{}
   are never critical.
\end{lemma}

\begin{proof}
Suppose $T$ has a critical tangle node~$v$. Let~$\vsv$ be as in the definition of its criticality. Let $w$ be the successor of~$v$ with $\beta(vw)=\vsv$, and let $\ell\ge w$ be a tangle leaf of~$T$.%
   \COMMENT{}

Then $\vsv$ lies in the $\F$-tangle $\ucl(\beta_\ell)$. As $\F$ is standard, $\vsv$~cannot be co-trivial.
Hence $\ucl(\beta_v)\cup\{\vsv\}$ contains a triple from~$\RR(\vU)\subseteq\F$. As $\ucl(\beta_v)\cup\{\vsv\}\subseteq\ucl(\beta_\ell)$, this means that $\ell$ is not a tangle leaf, contrary to its definition.
\end{proof}

\begin{lemma}\label{lem:critical}
Let $u,v$ be non-leaf nodes of~$T\!$. If $s_u$ and~$s_v$ cross, then either $u$ or~$v$ is critical.
\end{lemma}

\begin{proof}
By \cref{lem:cross}, we can find $r\in\{s_u,s_v\}$ such that two corners~$t,t'$ of $s_u$ and~$s_v$ lie on the same side of~$r$ and have order~$<|r|$. Let $\rv$ orient~$r$ away from these corners. Then these have orientations $\vt,\vtdash\le\vr$. Note that $t,t'\in S$, since $|t|,|t'| < |r|$ and $r\in S = U_k$, as well as $\{\vr,\tv,\tvdash\}\in\RR(\vU)$.

Let $r = s_v$, say, and let $w$ be the child of~$v$ for which $\beta(vw) = \rv$. If $\rv$ is co-trivial in~$\vS$, then $v$ is critical as desired, so we assume that $\rv$ is not co-trivial. Since $T$ is an $\F$-tangle structure tree and $v$ is not a leaf, the set~$\beta_v$ has no subset in~$\F$.%
   \COMMENT{}
   As $\F$ is standard, this means that~$\beta_v$, and hence also~$\beta_w = \beta_v\cup\{\rv\}$, has no co-trival elements.

Since $t$ and~$t'$ have order~$<|r|$ but $s_v = r$, our assumption that $T$ is thoroughly ordered implies that $t$ and~$t'$ were ineligible for selection as~$s_v$: they must have orientations in~$\ucl(\beta_v)\subseteq\ucl(\beta_w)$.%
   \COMMENT{}
   Now $\ucl(\beta_w)$ is consistent by \cref{lem:closure}, because $\beta_w$ is consistent by definition of~$T$ and has no co-trivial elements.%
   \COMMENT{}
   But the only orientations of~$t$ and~$t'$ that are consistent with~$\rv\in\beta_w$ are~$\tv$ and~$\tvdash$, since $\vt,\vtdash\le\vr$. Thus, $\tv,\tvdash\in\ucl(\beta_v)$.

This makes $\{\vr,\tv,\tvdash\}\in\RR(\vU)$ a~witness of the fact that $v$ is critical.
\end{proof}

\begin{lemma}\label{lem:treeset}
$N\!$ is a tree of the tangles of~$S$. In particular, $N\!$ is nested.%
   \COMMENT{}
\end{lemma}

\begin{proof}
$N\!$ is nested by \cref{lem:notcritical,lem:critical}. By \cref{lem:N}, this makes it a tree of the tangles of~$S$.
\end{proof}

This completes our proof of \cref{thm:ToT}.\end{proof}

\medbreak

The proof of \cref{thm:ToT} starts from the premise that we have a thoroughly ordered tangle structure tree for our given separation system. \cref{thm:generalduality} provides this when $\F$ is standard and rich. Both these properties are required in the premise of the theorem; but the richness of~$\F$ is never used in the proof. However it follows from the existence of the structure tree that the proof does need, combined with the injectivity of the order function on~$S$ \cite[Theorem 4.8]{TSTs}. So we may as well require it.\looseness=-1

The tangle structure tree in which we find our tree of tangles does not, however, have to be thoroughly ordered: we could work in any tree obtained from the unique thoroughly ordered structure tree provided by \cref{thm:generalduality} by reducing it as in the proof of \cref{thm:effandirreducible} given in~\cite{TSTs}. This is because tangle nodes are never deleted in the reduction process, remain tangle nodes, and their associated separations~$s_v$ remain optimal distinguishers of any two tangles such that $v$ is the infimum in~$T$ of their tangle leaves. See~\cite[Section~5]{TSTs} for more analysis.%
   \COMMENT{}

\cref{thm:ToT} is our basic tree-of-tangles theorem for robust $\F$-tangles. It applies only to the tangles of a fixed separation system $\vS=\vUk$ inside some universe~$\vU$. The classical tree-of-tangles theorem for profiles~\cite{ProfilesNew}, however, is more comprehensive (in its more restricted context):%
   \COMMENT{}
   it finds a canonical tree of tangles for all the (maximal) tangles in~$\vS$, regardless of their order.%
   \footnote{Tangles of~$S_k$ are often called `tangles in~$\vS$ of order~$k$'.}

We can extend \cref{thm:ToT} to yield such a more general tree of tangles in~$\vS$,~too.%
   \COMMENT{}
   Our definition of tangle structure trees in this context remains the same as before, except that the notion of tangle leaves is extended to capture all the tangles in~$\vS$:

\begin{definition}\label{def:TSTinS}
An \emph{$\F$-tangle structure tree in~$\vS$} is a consistent separation tree~$T$ on~$\vS$ in which every leaf is either a tangle leaf or forbidden, and for every non-leaf~$v$ the set $\beta_v$ has no subset in $\F$.

Here, a leaf~$\ell$ of~$T$ is a {\em tangle leaf\/} if $\ucl(\beta_\ell)$ is a maximal $\F$-tangle in~$\vS$. It is {\em forbidden\/} if $\beta_\ell$ has a subset in~$\F$.
\end{definition}

\noindent
   Definitions that involve the term `tangle leaf', such as that of {\em necessary\/} edges or nodes of~$T$, adapt accordingly: they now refer to `tangle leaves' as defined above.

\medbreak

Here, then, is our more comprehensive tree-of-tangles theorem. Its tree of tangles displays more tangles than \cref{thm:ToT} does, namely, all the maximal tangles in~$\vS$ rather than just the tangles of~$S$ (which are also maximal tangles in~$\vS$). However, its premise is also stronger.%
   \COMMENT{}
   So neither theorem implies the other.

\begin{theorem}\label{thm:ToTinS}
Let $\vS$ be a universe of separations with an injective and structurally submodular order function.%
   \COMMENT{}
   Let $\F$ be any set that includes~$\RR(\vS)$ and is rich and standard for~$\vSk$ for every $k\in\R$.%
   \COMMENT{}
   Then there exists a tree of the tangles in~$\vS$.
\end{theorem}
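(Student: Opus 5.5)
Since the order function on the universe $\vS$ is injective, it takes only finitely many values on the finite set~$S$ (or at least the separation orders form a well-ordered set if we assume finiteness, as throughout). So we reduce to a single large separation system. Choose $k$ larger than every order $|s|$, $s\in S$. Then $\vSk=\vS$, and by \cref{thm:generalduality} there is a unique thoroughly ordered $\F$-tangle structure tree~$(T,r,\beta)$ of~$\vSk$. The plan is to show that this one tree already encodes all maximal tangles in~$\vS$. The natural variant is to work with the tree as in \cref{def:TSTinS}: we take the thoroughly ordered separation tree on~$\vS$ and cut it off (declare leaves) at nodes~$v$ for which $\ucl(\beta_v)$ is a maximal $\F$-tangle of some~$S_\ell$, i.e.\ where no extension of $\ucl(\beta_v)$ by an orientation of the next separation~$s_v$ avoids~$\F$.

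The key step is to prove that for every maximal $\F$-tangle $\tau$ of some $S_\ell$ there is a node~$v$ with $\ucl(\beta_v)=\tau$, and we would argue by thorough ordering. Walk down from the root choosing at each node~$u$ the child whose label lies in~$\tau$, as long as $|s_u|<\ell$. Because $T$ is thoroughly ordered, the separations of order $<\ell$ not yet oriented by $\ucl(\beta_u)$ have minimal order at least $|s_u|$. So the walk stops exactly at a node~$v$ where $\ucl(\beta_v)$ orients all of~$S_\ell$ and contains $\tau$'s choices, whence $\ucl(\beta_v)\supseteq\tau$ and (by \cref{lem:closure} applied to $\vS_\ell$) equality on~$S_\ell$. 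We still have to check that $\ucl(\beta_v)$ contains nothing of order $\ge\ell$, using injectivity and that $s_v$ is then the unique separation of order~$\ell'$ minimal above. Richness for $\vS_{\ell}$ is used here to ensure the walk is never blocked by a forbidden leaf before reaching~$v$: the argument of \cref{thm:display} for $\vS_\ell$ applies to the restriction of~$T$ to nodes whose $s_u$ has order~$<\ell$. Maximality of~$\tau$ then makes $v$ a tangle leaf in the sense of \cref{def:TSTinS}.

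Then let $N$ be the set of $s_v$ for tangle nodes~$v$ (infima of two such tangle leaves). For two maximal tangles $\tau,\tau'$ with leaves $\ell,\ell'$ and infimum~$v$, $s_v$ distinguishes them optimally by the argument of \cref{lem:inf}(ii), since any lower-order separation is oriented by $\ucl(\beta_v)$. Nestedness follows exactly as in \cref{lem:notcritical,lem:critical}: \cref{lem:cross} uses only structural submodularity on~$\vS$, and the corner argument puts $t,t'$ of order $<|r|$ into $\ucl(\beta_v)$ by thorough ordering, producing a triple of $\RR(\vS)\subseteq\F$. Hence $v$ is critical, while tangle nodes are not critical because a maximal tangle leaf above~$\vsv$ is $\F$-avoiding and standardness excludes co-trivial~$\vsv$. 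The main obstacle is the first key step: making sure the truncated tree's leaves are exactly the maximal tangles in~$\vS$. In particular, tangles of different orders must not sit on one root-path, which maximality should rule out, and forbidden nodes of the full tree must not hide lower-order tangles. I would handle the latter by using richness of $\F$ for each $\vS_\ell$ to compare with the thoroughly ordered tree of~$\vS_\ell$, which by uniqueness is an initial truncation of~$T$.
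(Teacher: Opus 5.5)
Your plan is correct and is essentially the paper's proof. The paper also takes the unique thoroughly ordered tree $T=T_n$ of all of~$\vS$, truncates it at nodes both of whose children are forbidden leaves, and uses uniqueness to see each $T_k$ as an initial piece of~$T$. It then gets optimal distinction and nestedness from the arguments of \cref{lem:inf,lem:cross,lem:critical,lem:notcritical}, applied there to a suitable~$T_i$ rather than to~$T$ itself; your descent along~$\tau$ is just a direct way of finding the leaf of~$T_\ell$ that \cref{thm:display} provides.

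Two corrections. First, drop your planned check that $\ucl(\beta_v)$ contains nothing of order $\ge\ell$. For closures taken in~$\vS$ it is false in general, since closures grow upward into high orders. It is also unnecessary: what you need is $\ucl(\beta_v)\cap\vS_\ell=\tau$, i.e.\ the closure taken in~$\vS_\ell$, and you already have this. Second, richness is not what keeps the walk away from forbidden leaves: this is automatic because $\beta_u\subseteq\tau$ and $\tau$ avoids~$\F$.
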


\begin{proof}
We start by observing that the thoroughly ordered $\F$-tangle structure trees $(T_k,r_0,\beta^k)$ provided by \cref{thm:ToT} for the separation systems~$\vSk\subseteq\vS$ are unique by \cref{thm:generalduality}. And they are naturally nested: growing from the same root~$r_0$, they satisfy $T_i\subseteq T_j$ and $\beta^i\subseteq\beta^j$ for all~$i<j$.%
   \COMMENT{}

Let $n$ be large enough that $S=S_n$, and let $T:=T_n$. Then every non-leaf node~$v$ of~$T$ is a leaf of some~$T_k$: just choose $k$ big enough that $v\in T_k$, but small enough that $|s_v|\ge k$. As $v$ is a non-leaf node of~$T$, the fact that $(T_n,r_0,\beta^n)$ is an $\F$-tangle structure tree implies%
   \COMMENT{}
   that $\beta^n_v\supseteq\beta^k_v$ has no subset in~$\F$. Thus, $v$~is a tangle leaf of~$T_k$ in the usual sense of \cref{sec:TSTs}.%
   \COMMENT{}%
   \looseness=-1

Let $T'$ be obtained from~$T$ by deleting any pairs $\ell,\ell'$ of forbidden leaves of~$T$ that are children of the same node, and let $\beta:= {\beta^n\!\restriction\! E(T')}$. Then every leaf of~$T'$ is either a forbidden leaf of~$T$, or a tangle leaf~$\ell$ of~$T_k$ for some~$k$.%
  \COMMENT{}
   In the latter case, its associated tangle~$\tau$ of~$S_k$ is a maximal tangle in~$\vS$: it cannot extend to a tangle $\tau'\supsetneq\tau$ of any~$S_{k'}$. Indeed, this~$\tau'$ would be associated with a tangle leaf $\ell'>_r \ell$ of~$T_{k'}$, so $\tau'$  would include either $\beta_\ell\cup\{\vsl\}$ or~$\beta_\ell\cup\{\svl\}$. This cannot happen for any $\F$-tangle~$\tau'$, as both these sets have a subset in~$\F$ by the choice of~$\ell$.%
   \COMMENT{}

Thus, every leaf $\ell$ of~$T'\!$ is either forbidden (and also a leaf of~$T$), or it is a tangle leaf in our new sense that $\ucl(\beta_\ell)$ is a maximal $\F$-tangle in~$\vS$. Conversely, for every maximal tangle~$\tau$ in~$\vS$ there is a leaf $\ell$ of~$T'$ such that $\ucl(\beta_\ell) = \tau$. But, unlike in~$T$, for every non-leaf node~$v$ of~$T'$ there exists a tangle leaf $\ell > v$ of~$T'\!$.%
   \COMMENT{}

Let~$V\!$ be the set of all non-leaf nodes of~$T'\!$ that are not the parent of a forbidden leaf of~$T$.%
   \COMMENT{}
   This~$V\!$ is precisely the set of all tangle nodes of~$T'\!$.%
   \COMMENT{}
   Let $N = \{\,s_v\mid v\in V\}$. We shall prove that $N$ is our desired tree of tangles: that it is nested, and that it consists of precisely the separations in~$S$ that distinguish two maximal $\F$-tangles in~$\vS$ optimally.

Let us show first that every two maximal tangles in~$\vS$, say a~tangle $\tau_i$ of~$S_i$ and a tangle~$\tau_j$ of~$S_j$ with $i\le j$, are distinguished optimally by some separation in~$N$. Let us apply \cref{thm:ToT} with $k:=i$ to the tangles~$\tau_i$ and $\tau_j\cap\vSi\ne\tau_i$ of~$S_i$.%
   \COMMENT{}
   By the proof of the theorem,%
   \COMMENT{}
   these tangles are distinguished optimally by~$s_v$ for $v$ the infimum of their tangle leaves in~$T_i$. This~$v$ lies in~$V\!$, and $s_v$ also distinguishes $\tau_i$ and~$\tau_j$ optimally as tangles in~$\vS$, i.e., as partial orientations of~$S$.\looseness=-1

Conversely, let us show that every $s_v\in N$ distinguishes some such pair~$\tau_i,\tau_j$ of tangles in~$\vS$ optimally. As $v$ lies in~$V\!$, it has two children in~$T'\!$, neither of which is a forbidden leaf of~$T$. Since all forbidden leaves of~$T'$ are also forbidden leaves of~$T$, the definition of~$T'\!$ implies that $v$ is the infimum of some tangle leaves $\ell_i$ and~$\ell_j$ of~$T'\!$%
   \COMMENT{}
   associated with tangles $\tau_i$ of~$S_i$ and $\tau_j$ of~$S_j$. By our earlier arguments, these are maximal tangles in~$\vS$, and $s_v$~distinguishes them optimally.

To see that $N$~is nested, consider distinct nodes $u,v\in V\!$ in~$T$.%
   \COMMENT{}
   Suppose $s_u$ and~$s_v$ cross. By \cref{lem:cross}, we can find $r\in\{s_u,s_v\}$ such that two corners~$t,t'$ of $s_u$ and~$s_v$ lie on the same side of~$r$ and have order~$<|r|$. Let us assume that $r=s_v$.
   As earlier in our proof,%
   \COMMENT{}
   $v$~is a tangle node in some~$T_i$. But now $v$ is critical in~$T_i$, by the proof%
   \COMMENT{}
   of \cref{lem:critical}. (Note that this proof no longer considers~$u$ after identifying~$v$ by \cref{lem:cross}, exactly as we did here. This is important, since in our context $u$~may not be a node of~$T_i$.) This contradicts \cref{lem:notcritical} applied to~$T_i$.%
   \COMMENT{}%
   \footnote{\lineskiplimit=-3pt We cannot apply the two lemmas to~$T'\!$ directly because~$T'\!$, unlike $T$ and~$T_i$, need not be a tangle structure tree of any~$\vSdash\!$ with $S'\subseteq S$.}%
   \COMMENT{}
   \end{proof}

Let us summarize the details from our proof of \cref{thm:ToTinS}:

\begin{corollary}\label{lem:ToTinS}
Let $\vS$ be a universe of separations with an injective and structurally submodular order function. Let $\F$ be any set that includes~$\RR(\vS)$ and is rich and standard for~$\vSk$ for every $k\in\R$. Let $T'\!$ be obtained from the unique%
   \COMMENT{}
   thoroughly ordered $\F$-tangle structure tree $(T,r,\beta)$ for~$\vS$ by deleting any pairs $\ell,\ell'$ of forbidden leaves of~$T\!$ that are children of the same node. Let~$V\!$ be the set of tangle nodes of~$T'\!$.\par%
   \COMMENT{}
    Then $N = \{\,s_v\mid v\in V\}$ is a tree of the tangles in~$\vS$. It is nested and consists of all the separations in~$S$ that distinguish two maximal $\F$-tangles in~$\vS$ optimally.
\end{corollary}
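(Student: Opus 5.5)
This corollary merely collects what the proof of \cref{thm:ToTinS} established, so the plan is to re-run that proof with $T$ fixed as the unique thoroughly ordered $\F$-tangle structure tree for~$\vS=\vSn$, for $n$ large enough that $S=S_n$. First I will record that the trees $T_k$ for the $\vSk$ are nested subtrees of~$T$ sharing the root $r$ and agreeing with~$\beta$; this follows from uniqueness in \cref{thm:generalduality}, since truncating $T$ at nodes $v$ with $|s_v|\ge k$ yields a thoroughly ordered $\F$-tangle structure tree of~$\vSk$, by thorough ordering and injectivity. From this I will deduce that every non-leaf $v$ of~$T$ is a tangle leaf of $T_k$ for $k$ with $v\in T_k$ and $|s_v|\ge k$, because $\beta_v$ has no subset in~$\F$.

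Next I will identify the leaves of $T'$. Each is either a forbidden leaf of $T$, or a non-leaf $\ell$ of $T$ both of whose children were forbidden leaves. In the latter case $\ucl(\beta_\ell)$ is a tangle of some $S_k$, and it is maximal, since any extension would contain $\beta_\ell\cup\{\vsl\}$ or $\beta_\ell\cup\{\svl\}$, both of which have subsets in~$\F$. I will then check that $V$, the set of tangle nodes of $T'$, is exactly the set of non-leaves of $T'$ not parenting a forbidden leaf of~$T$. By construction, every non-leaf of $T'$ lies below some tangle leaf. A non-leaf with a forbidden child fails the tangle-node condition. If neither child is forbidden, then each child has a tangle leaf above it.

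For the distinguishing property, I will handle a pair of maximal tangles $\tau_i$ of $S_i$ and $\tau_j$ of $S_j$ with $i\le j$. I will apply \cref{lem:inf} inside $T_i$ to $\tau_i$ and $\tau_j\cap\vSi$; these differ by maximality of~$\tau_i$. This gives a tangle node $v$ of $T_i$ whose $s_v$ distinguishes them optimally, hence distinguishes $\tau_i,\tau_j$ optimally, and $v\in V$. Conversely, each $v\in V$ is the infimum of two tangle leaves of $T'$, and the same argument shows that $s_v$ distinguishes their maximal tangles optimally. This also shows that $N$ consists precisely of the optimal distinguishers: any optimal distinguisher of a pair equals the $s_v$ from \cref{lem:inf}\,(i) in~$T_i$.

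Nestedness is the delicate step. Suppose $s_u,s_v$ cross for $u,v\in V$. By \cref{lem:cross}, pick $r=s_v$, say, with two corners of order $<|r|$ on one side. Since $v$ is a tangle node of $T_i$ for suitable $i$, the argument of \cref{lem:critical}, which uses only $v$ and $r$, shows that $v$ is critical in $T_i$. This contradicts \cref{lem:notcritical} in~$T_i$. The obstacle is to confirm that $v$ really is a tangle node of that $T_i$, and not just of $T'$, and that \cref{lem:critical} needs no data about~$u$.
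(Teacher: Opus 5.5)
Your proposal is correct and is essentially the paper's own argument, namely the proof of \cref{thm:ToTinS}, which this corollary summarizes. The shared steps are: nested thoroughly ordered trees $T_k\subseteq T$; non-leaves of $T$ as tangle leaves of suitable $T_k$; maximality via the two forbidden children; \cref{lem:inf} in $T_i$ applied to $\tau_i$ and $\tau_j\cap\vSi$; and nestedness via \cref{lem:cross} and the $u$-free part of the proof of \cref{lem:critical}, contradicting \cref{lem:notcritical} in~$T_i$.

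Three minor touch-ups are needed. First, the leaves of $T'$ also include the tangle leaves of $T$ itself; these are tangles of $S$, so they are trivially maximal. Second, $T_k\subseteq T$ is better obtained by comparing the tree $T_k$ supplied by \cref{thm:generalduality} with $T$ node by node, because thorough ordering and injectivity alone do not show that $\ucl(\beta_v)$ avoids $\F$ at the new leaves $v$ of the truncated tree. Third, the point you flag at the end holds because $v\in V$ is the infimum of tangle leaves of some $T_i$ and $T_j$ with $i\le j$; the leaf of $T_i$ on the path to the latter is not forbidden, so $v$ is a tangle node of~$T_i$.
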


\medskip\section{Zooming in: how to make consistent subsets more efficient}\label{sec:measure}


\noindent
   When we compared our new tangle-tree duality results from \cref{sec:TTD} with the classical such theorems, such as \cref{thm:prevduality}~\cite{TangleTreeAbstract}, we observed two things. One was that our dichotomy \cref{thm:generalduality2} is more general than that from~\cite{TangleTreeAbstract} since, unlike there, our sets~$\F$ of forbidden subsets need not be stars of separations. Due to this greater generality, however, our witnesses for the non-existence of tangles, though `tree-like' in the sense that they are $\F$-trees, do not impose a tree structure on whatever our separation system separates (e.g., a graph or dataset), as \cref{thm:prevduality} from~\cite{TangleTreeAbstract} does.

For the case that $\F$ does consist of stars we went on to prove that our results imply a tangle-tree duality theorem just like \cref{thm:prevduality}, albeit from a different premise. At the end of \cref{sec:TTD} we remarked that, since our premise (that~$\F$ is rich) differs so fundamentally from the premise in~\cite{TangleTreeAbstract} (that $\vS$ is $\F$-separable), it is unlikely that our results could imply those of~\cite{TangleTreeAbstract} formally.

Our aim in this section is to show that our premise (and hence, tangle-tree duality) does, however, follow from a slight strengthening of $\F$-separability, one that is actually verified and used when the duality theorem of~\cite{TangleTreeAbstract} is applied~\cite{TangleTreeGraphsMatroids}. As a result, we shall be able to deduce all the applications of~\cite{TangleTreeAbstract} to concrete structures of interest~\cite{TangleTreeGraphsMatroids} from our results. This will be done in~\cite{TSTapps}.

Assuming a structurally submodular injective order function on~$\vS$, we shall prove that $\F$ is rich as soon as it is {\em closed under shifting\/}. We can then apply our \cref{cor:E} to obtain the same tangle-tree dichotomy as offered by \cref{thm:prevduality}, which also makes this assumption about~$\F$. In fact, our notion of `closed under shifting' will be weaker than that in~\cite{TangleTreeAbstract},%
   \COMMENT{}
   making our result stronger. While \cite{TangleTreeAbstract} makes no formal submodularity assumption, it assumes a consequence of it (separability of~$\vS$), which in practice is always established by assuming submodularity~\cite[Lemma~3.4]{TangleTreeGraphsMatroids}.%
   \COMMENT{}

Our strategy for proving, from assumptions similar to those made in~\cite{TangleTreeAbstract}, that some given~$\F$ is rich, is as follows. Recall that $\F$ is {\em rich\/} if every consistent orientation~$\tau$ of~$S$ that has a subset in~$\F$ also has an efficient such subset: one that is `maximally focused' in the sense that none of its elements is eclipsed by another separation in~$\tau$. Our strategy will be to start from an arbitrary set $\sigma\in\F$, and then to make it more focused by iteratively `zooming in' until it is efficient.

For intuition, think of any consistent orientation~$\tau$ of~$\vS$ as pointing towards some place in a dataset~$V\!$ that $S$ may be separating. If $\tau$ is a tangle, this could be a cluster; if not, it could be the (possibly empty) interior $\bigcap\sigma$ of a star~$\sigma\in\F$. For two oriented separations $\vr,\vs$ in~$\tau$, think of $\vs > \vr$ as expressing that $\vs$ points towards~$r$ and, beyond it, to whatever place $\vr$ points to in~$V\!$. Thus, while $\vr$ and~$\vs$ point to the same place, $\vr$~is closer to it. Hence if $\vr$ eclipses~$\vs$, then replacing~$\vs$ with~$\vr$ in some subset~$\sigma$ of~$\tau$~-- e.g., one in~$\F$, or a set of elements whose closure is a tangle of~$S$~-- makes~$\sigma$ more focused on the place to which~$\tau$ points.

Our strategy for making a given set $\sigma\in\F$ more efficient will thus be to replace, iteratively, an element of~$\sigma$ with another element of~$\tau$ that eclipses it. The rest of~$\sigma$ will either be left unchanged (at the start of this section) or `shifted' to one side of the eclipsing separation, as in~\cite{TangleTreeAbstract} (later in this section).

Let us call a collection $\F$ of subsets of~$\vS$ {\em closed under eclipsing\/} in $\tau\subseteq\vS$ if any set~$\sigma'$ obtained from a subset $\sigma\in\F$ of~$\tau$ by replacing some $\vs\in\sigma$ with a separation $\vr\in\tau$ that weakly eclipses~$\vs$ will also  lie in~$\F$.%
   \COMMENT{}
   Note that $\sigma'$ will be strictly smaller than~$\sigma$ in the natural extension of our partial ordering~$\le$ on~$\vS$ to~$2^\vS\!$, in which $\sigma\ge\sigma'$ means that there exists a map $f\colon \sigma\to\sigma'$ such that $\vs\ge f(\vs)$ for all $\vs\in\sigma$.

\begin{lemma}\label{lem:psiefficient}
If $\vS$ is an ordered%
   \COMMENT{}
   separation system and $\F\subseteq 2^\vS\!$ is closed under eclipsing in every consistent orientation of~$S$, then $\F$ is rich for~${\vS}$.
\end{lemma}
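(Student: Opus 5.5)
The plan is to start from an arbitrary subset of $\tau$ in~$\F$ and repeatedly replace a weakly eclipsed element by an element of~$\tau$ that weakly eclipses it. Closure under eclipsing keeps us inside~$\F$ at every step. A decreasing potential guarantees that the process stops, and when it stops the current set is strongly efficient in~$\tau$. I assume throughout, as is standing in this setting, that $\vS$ is finite.

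Let $\tau$ be a consistent orientation of~$S$ that has a subset $\sigma_0\in\F$. We build a sequence $\sigma_0,\sigma_1,\dots$ of subsets of~$\tau$ that all lie in~$\F$.
\begin{enumerate}
\item If $\sigma_i$ is strongly efficient in~$\tau$, we stop.
\item Otherwise some $\vs\in\sigma_i$ is weakly eclipsed by some other $\vr\in\tau$, that is, $\vr<\vs$ and $|r|\le|s|$.
\item We let $\sigma_{i+1}:=(\sigma_i\setminus\{\vs\})\cup\{\vr\}$. Since $\F$ is closed under eclipsing in~$\tau$ and $\vr\in\tau$, we get $\sigma_{i+1}\in\F$, and clearly $\sigma_{i+1}\subseteq\tau$.
\end{enumerate}
If this stops, the final $\sigma_i$ is a strongly efficient subset of~$\tau$ in~$\F$, which is what richness requires.

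It remains to show that the process terminates. This is the only real point, because $\vr$ may already lie in~$\sigma_i$, so $|\sigma_i|$ can shrink, and the naive ordering on sets is awkward to use. I would use the potential
$$\Phi(\sigma):=\sum_{\vx\in\sigma}\big|\{\vy\in\tau:\vy\le\vx\}\big|,$$
a non-negative integer because $\tau$ is finite. Write $D(\vx)$ for the down-set $\{\vy\in\tau:\vy\le\vx\}$.
\begin{itemize}
\item Since $\vr<\vs$ and $\vr\in\tau$, transitivity gives $D(\vr)\subseteq D(\vs)$.
\item The inclusion is proper, because $\vs\in D(\vs)$ but $\vs\notin D(\vr)$ by antisymmetry.
\item Passing from $\sigma_i$ to $\sigma_{i+1}$ removes the term $|D(\vs)|$ and adds at most the term $|D(\vr)|$; nothing is added if $\vr$ was already in~$\sigma_i$.
\end{itemize}
Hence $\Phi(\sigma_{i+1})<\Phi(\sigma_i)$. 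A strictly decreasing sequence of non-negative integers is finite, so the process terminates.

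Neither consistency of~$\tau$ nor the order function is needed beyond the definition of weak eclipsing. The main thing to check carefully is that the replacement step really matches the definition of `closed under eclipsing'. That definition allows replacing $\vs$ by an $\vr\in\tau$ that weakly eclipses it even when $\vr$ already lies in~$\sigma$. We have arranged every step to be exactly such a replacement.
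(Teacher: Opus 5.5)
Your proof is correct and is essentially the paper's argument. The paper picks $\sigma$ minimal in $\F\cap 2^\tau$ under its extension of $\le$ to $2^{\vS}$ (where $\sigma\ge\sigma'$ if some map $f\colon\sigma\to\sigma'$ has $\vs\ge f(\vs)$) and gets a contradiction from a single replacement step, whereas you run the same replacements as an explicit descent controlled by the integer potential $\Phi$. Your potential has the minor advantage of handling the case $\vr\in\sigma$ cleanly: there $\sigma$ and $\sigma\setminus\{\vs\}$ each lie below the other in the paper's relation, so the paper's claim that the new set is strictly smaller needs $f$ to be surjective or a tie-break by $|\sigma|$.
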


\begin{proof}
To prove that~$\F$ is rich, let $\tau$ be any consistent orientation of~$S$ that has a subset in~$\F$. We claim that every set~$\sigma$ that is minimal in~$\F\cap 2^\tau\ne\emptyset$ with respect to our partial ordering on~$2^\vS\!$ is strongly efficient in~$\tau$.

Indeed, suppose $\sigma$ is not strongly efficient in~$\tau$. Then there exists an $\vr\in\tau$%
   \COMMENT{}
   that weakly eclipses some $\vs\in\sigma$. Let $\sigma' := (\sigma\setminus\{\vs\})\cup\{\vr\}$. Then $\sigma > \sigma'\in\F$, since $\F$ is closed under eclipsing in~$\tau$. This contradicts the minimality of~$\sigma$, since $\sigma'\subseteq\tau$.%
   \COMMENT{}
\end{proof}

We now introduce our second way of making a set $\sigma\in\F$ more focused, which is essentially the shifting operation from \cite{TangleTreeGraphsMatroids,TangleTreeAbstract}.%
   \COMMENT{}
   Let $\vs\in\vS$ be non-trivial in~$\vS$ and non-degenerate. Write~$S_{\le\vs}$ for the set of all $t\in S$ to which $\vs$ points, those with an orientation~$\vt\le\vs$. Given any $\vr\le\vs$ in~$\vS$, define the $\vS_{\le\vs}\to\vU$ map%
   \footnote{Note that, as $\vs$ is non-trivial, $\vt\le\vs$ and $\tv\le\vs$ cannot both hold unless $s=t$, in which case $\vs\land\vr = \vr$ (rather than its inverse) is chosen as the image of~$\vs$, and $\rv$ as the image of~$\sv$.}
\[f\!\downarrow_{\vr}^{\vs}(\vt):=
\begin{cases}
\vt\wedge\vr &\text{if } \sv\ne\vt\le\vs;\\
(\tv\wedge\vr)^* & \text{if } \sv\ne\tv\le\vs.
\end{cases}\]
   It is easy to see that such {\em shifting maps\/} preserve the partial ordering between their arguments; in particular, they map stars to stars~\cite{TangleTreeAbstract}.
We say that a separation $\vr\le\vs$ in~$\vS$ {\em emulates\/}~$\vs$ in~$\vS$ if our shifting map has its image inside~$\vS$, i.e., if $\vt\land\vr\in\vS$ for every $\vt\in\vS$ with $\sv\ne\vt\le\vs$.

Let us show that shifting stars can reduce their {\em order\/}, the sum of the orders of their elements under a given order function on our separation system.

\begin{lemma}\label{lem:shift}
Let $\vU$ be a universe of separations with a structurally submodular order function, and let $S = U_\ell$ for some~$\ell\in\R$.%
   \COMMENT{}
   Let $\tau$ be a consistent orientation of~$S$. Assume that some element~$\vs$ of a star $\sigma\subseteq\tau$ is non-trivial in~$\vS$ and eclipsed%
   \COMMENT{}
   by some $\vr\in\tau$. Then $\vr$ can be chosen so that it emulates~$\vs$ in~$\vS$ and $\sigma':= f\!\downarrow_{\vr}^{\vs}(\sigma)$%
   \COMMENT{}
   is a star in~$\tau$ of lower order than~$\sigma$.
\end{lemma}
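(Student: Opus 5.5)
Among all $\vr\in\tau$ that eclipse~$\vs$, I would choose one of minimum order~$|r|$, and among those one that is $\le$-minimal. Then $\vr<\vs$ and $|r|<|s|<\ell$. The proof has three parts: show that this~$\vr$ emulates~$\vs$ in~$\vS$; show that the shifted elements are still oriented consistently with~$\tau$; and compare the order of~$\sigma'$ with that of~$\sigma$.

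\textbf{Emulation.} Let $\vt\in\vS$ with $\sv\ne\vt\le\vs$; we need $|\vt\land\vr|<\ell$. By structural submodularity~$(\dag)$, applied to the pair $\vr,\vt$, we have $|\vr\lor\vt|\le|t|$ or $|\vr\land\vt|\le|r|$. In the second case $|\vr\land\vt|\le|r|<\ell$ and we are done. In the first case, $\vr\lor\vt$ lies in~$\vS$ and $\vr\le\vr\lor\vt\le\vs$.
\begin{itemize}
\item If $\vr\lor\vt\ne\vs$, then consistency of~$\tau$ (containing~$\vr$ and~$\vs$) forces $\vr\lor\vt\in\tau$. We then want to use it together with~$\vr$ and the minimality of~$\vr$ to derive a contradiction or the desired bound.
\item If $\vr\lor\vt=\vs$, we need a separate argument.
\end{itemize}
This part is the main obstacle, and the choice of~$\vr$ may need adjusting. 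What I would try first is to pick~$\vr$ of minimum order among all elements of~$\tau$ below~$\vs$, tie-broken by $\le$-minimality, and then apply~$(\dag)$ with the roles of $\vr,\vt$ swapped. If $|\vr\land\vt|>|r|$, then $|\vr\lor\vt|\le|t|<\ell$. Moreover $\vr\land\vt$ would then have order exceeding~$|r|$ while lying below~$\vr$, so one should be able to find a candidate in~$\tau$ of order~$\le|r|$ below~$\vr$, contradicting the choice unless $\vr\land\vt$ has small order. I expect to have to iterate this choice-and-exchange argument carefully.

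\textbf{Consistency with~$\tau$.} Next I would check that $\sigma'\subseteq\tau$. Elements of~$\sigma$ other than~$\vs$ point towards~$s$, since $\sigma$ is a star: for $\vx\in\sigma\setminus\{\vs\}$ we have $\vx\ge\sv$, i.e.\ $\xv\le\vs$. So the shift is defined on~$\sigma$ and sends $\vx\mapsto(\xv\land\vr)^*=\vx\lor\rv$, while $\vs\mapsto\vr$. Now $\vx\lor\rv\ge\vx\in\tau$, and~$\tau$ is a consistent orientation (closed upward by \cref{lem:closure}). Hence $\vx\lor\rv\in\tau$, unless it equals~$\xv$, which is impossible for non-trivial separations. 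Shifting maps preserve~$\le$, so $\sigma'$ is again a star.

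\textbf{Order comparison.} Finally I would compare orders termwise. The element~$\vs$ is replaced by~$\vr$, and $|r|<|s|$ is a strict drop. For each other $\vx\in\sigma$, I would show $|\vx\lor\rv|\le|x|$ using~$(\dag)$ on the pair $\rv,\vx$: either $|\rv\lor\vx|\le|x|$, which is what we want, or $|\rv\land\vx|\le|r|$. In the latter case $(\rv\land\vx)^*=\vr\lor\xv$ lies in~$\vS$ and in~$\tau$, since it is $\ge\vr$. It is also $\le\vs$, because $\vr\le\vs$ and $\xv\le\vs$. So it eclipses~$\vs$ or equals~$\vs$, and, given the minimality in the choice of~$\vr$, it should equal~$\vr$. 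From this I would derive $\xv\le\vr$, so that $\vx\lor\rv=\vx$, which again has order~$|x|$. Summing the termwise bounds gives that $\sigma'$ has lower order than~$\sigma$.
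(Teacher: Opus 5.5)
Your plan is the paper's plan: take an eclipser of minimum order, prove emulation, check $\sigma'\subseteq\tau$, and compare orders termwise. But there is a genuine gap, and it comes from your tie-break. You take $\vr$ to be $\le$-minimal among the eclipsers of minimum order, whereas the argument needs it to be $\le$-\emph{maximal}.

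\textbf{Where the tie-break fails.} Write $\tv$ for an element of $\sigma\setminus\{\vs\}$, so that $\vt\le\vs$. In your order comparison you correctly reach the case where $\vr\lor\vt$ lies in $\tau$, lies below $\vs$ and has order $\le|r|$. So it is another eclipser of $\vs$ of minimum order. But it lies \emph{above}~$\vr$, and $\le$-minimality of~$\vr$ says nothing about such candidates. Hence `it should equal~$\vr$' does not follow. For a genuinely submodular order function no tie-break would be needed: minimality of $|r|$ gives $|\vr\lor\vt|\ge|r|$, hence $|\vr\land\vt|\le|t|$. The lemma, however, assumes only structural submodularity, and the tie-break is what compensates for this.

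\textbf{Emulation.} As you say yourself, this part is not proved. The instance `$|\vr\lor\vt|\le|t|$ or $|\vr\land\vt|\le|r|$' of $(\dagger)$ that you use is the unhelpful one. In its first alternative, $\vr\lor\vt$ need not eclipse $\vs$ and may even equal it. Your proposed iteration presupposes facts about $\vr\land\vt$ (membership in $\tau$, or even in $\vS$) that are exactly what has to be shown.

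\textbf{The missing idea.} The other instance of $(\dagger)$, together with the $\le$-maximal tie-break, settles emulation and the order bound at once. Take any $\vt\in\vS$ with $\sv\ne\vt\le\vs$. Then $|\vr\lor\vt|\le|r|$ or $|\vr\land\vt|\le|t|$. In the first case $\vr\lor\vt\in\vS$, and it lies in $\tau$ by consistency, since $\vr\lor\vt\ge\vr\in\tau$. The only possible exception is $\vr\lor\vt=\rv$, which would put both orientations of $r\ne s$ strictly below $\vs$ and make $\vs$ trivial. So the case to rule out is $\rv$, not $\vs$ as in your case split. Since $\vr\lor\vt\le\vs$ and $|\vr\lor\vt|\le|r|<|s|$, it eclipses $\vs$. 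The choice of $\vr$ then forces $\vr\lor\vt=\vr$, i.e.\ $\vt\le\vr$ and $\vr\land\vt=\vt$.

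Thus always $|\vr\land\vt|\le|t|<\ell$ or $\vr\land\vt=\vt\in\vS$. This is emulation. Applied to the inverses of the elements of $\sigma\setminus\{\vs\}$, it is also exactly the termwise bound $|(\vt\land\vr)^*|\le|t|$ that you need. Your check that $\sigma'\subseteq\tau$ is then fine; it relies on emulation to know that $(\vt\land\vr)^*\in\vS$.
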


\begin{proof}
Choose~$\vr$ with $|r|$ minimum, and subject to this maximal under the partial ordering on~$\vS$. Let us show first that $\vr$ emulates~$\vs$ in~$\vS$.%
   \COMMENT{}

We have to show that $\vr\land\vt\in\vS$ for any $\vt\in\vS$ with $\sv\ne\vt\le\vs$, i.e., that $|\vr\land\vt| < \ell$. By the structural submodularity of our order function we have $|\vr\land\vt|\le |t| < \ell$ as desired if $|\vr\lor\vt| > |r|$,%
   \COMMENT{}
   so let us assume that $|\vr\lor\vt|\le |r|$. Then $\vr\lor\vt\in\vS$;%
   \COMMENT{}
   let us show that, in fact, $\vr\lor\vt\in\tau$.%
   \COMMENT{}

 As $\vr\lor\vt\ge\vr\in\tau$, the consistency of~$\tau$ implies $\vr\lor\vt\in\tau$ as desired, unless the unoriented separation underlying $\vr\lor\vt$ is~$r$. If $\vr\lor\vt = \vr$, we are done since $\vr\in\tau$, so let us assume that $\rv = \vr\lor\vt\ge \vr$. Then $\vs\ge\vr\lor\vt = \rv$, since $\vs\ge\vr$ as well as $\vs\ge\vt$ by assumption, which makes $\vs$ trivial witnessed by~$r$, contrary to assumption, unless $s=r$. But if $\vs=\vr$ we have $\vr\lor\vt = \vs\lor\vt = \vs\in\tau$, while if $\vs = \rv$ we have $\vr\lor\vt = \rv = \vs\in\tau$. This completes our proof that $\vr\lor\vt\in\tau$.

As noted, we have $\vs\ge\vr\lor\vt$,%
   \COMMENT{}
   as well as $|\vr\lor\vt|\le |r| < |s|$, since $\vr$ eclipses~$\vs$. So $\vr\lor\vt$ eclipses~$\vs$,%
   \COMMENT{}
   and was thus a candidate for~$\vr$. As $|\vr\lor\vt|\le |r|$, this contradicts the choice of~$\vr$ unless $\vr\lor\vt = \vr$. But in that case $\vr\ge\vt$,%
   \COMMENT{}
   giving $\vr\land\vt = \vt\in\vS$ as desired.%
   \COMMENT{}
   This completes our proof that $\vr$ emulates~$\vs$.

As shifting preserves the partial ordering on~$\vS$, we know that $\sigma'$ is a star.%
   \COMMENT{}
   Let us show that it has lower order than~$\sigma$. Since $f\!\downarrow_{\vr}^{\vs}(\vs) = \vr$ and $|r| < |s|$,%
   \COMMENT{}
   it suffices to show that $\big|f\!\downarrow_{\vr}^{\vs}(\vt)\big| = |\vr\land\vt|\le |t|$ for any other elements~$\tv$ of~$\sigma$.%
   \COMMENT{}
   But this follows from structural submodularity and the choice of~$\vr$, as earlier, unless $\vr\land\vt = \vt$.%
   \COMMENT{}
   But in that case we  also have $|\vr\land\vt| = |t|$ as desired.

It remains to show that $\sigma'\subseteq\tau$. We have $f\!\downarrow_{\vr}^{\vs}(\vs) = \vr\in\tau$ by explicit assumption about~$\vr$. For any $\tv\in\sigma$ other than~$\vs$, we have $\vs\ge\vt$, since $\sigma$ is a star, and hence $f\!\downarrow_{\vr}^{\vs}(\tv) = (\vt\land\vr)^* = \tv\lor\rv \ge\tv\in\tau$ by the definition of~$f\!\downarrow_{\vr}^{\vs}$. 
This implies $f\!\downarrow_{\vr}^{\vs}(\tv)\in\tau$ by the consistency of~$\tau$ unless $f\!\downarrow_{\vr}^{\vs}(\tv) = \vt$,%
   \COMMENT{}
   so let us show that this cannot be the case.

If it is, then we have shown that $\vt = f\!\downarrow_{\vr}^{\vs}(\tv)\ge\tv$. Then $\vs\ge\vt\ge\tv$, which contradicts our assumption that $\vs$ is non-trivial unless $s=t$. But in that case we have $\vs=\vt$, since $\vs\ne\tv$ as these are distinct elements of~$\sigma$. But if $\vs = \vt$ then $\rv = f\!\downarrow_{\vr}^{\vs}(\sv) = f\!\downarrow_{\vr}^{\vs}(\tv) = \vt = \vs$, which contradicts the fact that $|r| < |s|$.
\end{proof}

Call a set $\F\subseteq 2^\vS\!$ of stars \emph{closed under shifting}%
   \footnote{Our definition of `closed under shifting' is weaker than that in~\cite{TangleTreeGraphsMatroids,TangleTreeAbstract}, making our subsequent results stronger. We can even strengthen them further: as is easily checked, we can weaken in both \cref{lem:psiefficienter,thm:newduality} the assumption that $\F$ is closed under shifting in\,$\vS$ to assuming that $\F$ is closed under shifting in every consistent orientation of~$S$. We omitted this for simplicity.}\COMMENT{}
   in~$\tau\subseteq\vS$ if for every $\sigma\in\F\cap 2^\tau$ and every $\vr\in\tau$ that weakly eclipses%
   \COMMENT{}
   and emulates some non-trivial $\vs\in\sigma$%
   \COMMENT{}
   in~$\vS$ we have~$f\!\downarrow_{\vr}^{\vs}(\sigma)\in\F$.

\begin{lemma}\label{lem:psiefficienter}
Let $\vU$ be a universe of separations with a structurally submodular injective order function, and let $S = U_\ell$ for some~$\ell\in\R$. Let $\F$ be a set of stars of non-trivial%
    \COMMENT{}
    separations in~$\vS$ that is closed under shifting in~$\vS$.$^{\thefootnote}$ Then $\F$ is rich for~$\vS$.\looseness=-1
\end{lemma}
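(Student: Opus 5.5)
The plan is to argue as in \cref{lem:psiefficient}, with a minimality argument. Instead of the partial ordering on~$2^\vS$ we minimize the \emph{order} of a star, as \cref{lem:shift} suggests. Let $\tau$ be a consistent orientation of~$S$ that has a subset in~$\F$. Among all $\sigma\in\F\cap 2^\tau$ choose one of minimum order $\sum_{\vs\in\sigma}|s|$. This minimum exists because $\vS$ is finite; if finiteness is not available, we can instead use that the order function is injective and take a lexicographic minimum. We then claim that this~$\sigma$ is strongly efficient in~$\tau$. Since the order function is injective, a separation $\vr\in\tau$ that weakly eclipses~$\vs$ has $|r|\le|s|$. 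Either $r=s$, in which case $\vr<\vs$ forces $\vr=\sv$, impossible as both would lie in~$\tau$. Or $|r|<|s|$, so $\vr$ eclipses~$\vs$. Hence strong efficiency reduces to showing that no element of~$\sigma$ is eclipsed by an element of~$\tau$.

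Suppose some $\vs\in\sigma$ is eclipsed by some $\vr\in\tau$. Every element of $\sigma$ is a non-trivial separation in~$\vS$, by the assumption on~$\F$, so \cref{lem:shift} applies. It lets us re-choose~$\vr$ so that $\vr$ still eclipses (hence weakly eclipses)~$\vs$, emulates~$\vs$ in~$\vS$, and $\sigma':=f\!\downarrow_{\vr}^{\vs}(\sigma)$ is a star in~$\tau$ of lower order than~$\sigma$. Because $\F$ is closed under shifting in~$\vS$, and in particular in~$\tau$ (or, if closure is only given in~$\vS$, we read it directly), we get $\sigma'\in\F$. Together with $\sigma'\subseteq\tau$ this contradicts the minimality of the order of~$\sigma$. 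So $\sigma$ is strongly efficient, and $\F$ is rich.

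The main point to check is that \cref{lem:shift} delivers an $\vr$ with exactly the properties required in the definition of `closed under shifting': weak eclipsing, emulation, and non-triviality of~$\vs$, with $\sigma\subseteq\tau$. We must also make sure the lemma's output $\sigma'$ really lies in~$2^\tau$, which it asserts. A minor subtlety is whether the order of $\sigma'$ is computed with multiplicities if the shift map is not injective on~$\sigma$. The lemma's proof bounds the shifted orders elementwise, with one strict drop, so the sum over the image set is even smaller. Finally, the order function on~$U$ is used only through \cref{lem:shift}, whose hypotheses (structural submodularity, $S=U_\ell$) are exactly those assumed here.
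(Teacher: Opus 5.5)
Your proposal is correct and follows the paper's proof. You choose $\sigma\in\F\cap 2^\tau$ of minimum order, use injectivity to turn weak eclipsing into eclipsing, and then apply \cref{lem:shift} together with closure under shifting to get a star $\sigma'\in\F\cap 2^\tau$ of smaller order, a contradiction.

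Your side remarks are not needed, and two of them do not quite hold as written. Like your main line, the paper just takes a star of minimum order, implicitly using finiteness; your lexicographic fallback for infinite $\vS$ would not obviously work. Also, \cref{lem:shift} already asserts that $\sigma'$ has lower order, so you need not discuss multiplicities, and your argument there tacitly assumes nonnegative orders.
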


\begin{proof}
To prove that~$\F$ is rich, consider any consistent orientation~$\tau$ of~$S$ that includes some~$\sigma\in\F$. Choose such a star~$\sigma$ in~$\tau$ of minimum order. We shall show that $\sigma$ is strongly efficient in~$\tau$.

If not, there exists an $\vr \in \tau$ that weakly eclipses some $\vs\in\sigma$. Since our order function is injective, any such $\vr$~in fact eclipses~$\vs$. By \cref{lem:shift}, we can choose~$\vr$ so that it emulates~$\vs$ in~$\vS$ and $\sigma':= f\!\downarrow_{\vr}^{\vs}(\sigma)$%
   \COMMENT{}
   is a star in~$\tau$ of lower order than~$\sigma$. As $\F$ is closed under shifting we have $\sigma'\in\F$, which contradicts our choice of~$\sigma$.
\end{proof}

\begin{theorem}\label{thm:newduality}
Let $\vU\!$ be a universe of separations with either a submodular or an injective structurally submodular order function. Let $S = U_\ell$ for some~$\ell\in\R$. Assume that $\vS$ has no trivial elements. Let $\F$ be a set of stars of separations in~$\vS$ that is closed under shifting in~$\vS$.$^{\thefootnote}$ Then exactly one of the following assertions holds:\looseness=-1
\begin{enumerate}\itemsep2pt\vskip2pt
  \item there exists an $\F$-tangle of~$S$;
  \item there exists an $S$-tree over~$\F$.
\end{enumerate}
\end{theorem}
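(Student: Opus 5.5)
The plan is to derive \cref{thm:newduality} from \cref{cor:E}, using \cref{lem:psiefficienter} to supply richness. As noted after \cref{lem:starsimpliesnested}, the two assertions cannot both hold, so only the existence of an $S$-tree over~$\F$ needs proof when $S$ has no $\F$-tangle.

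The first step is to reduce to an injective structurally submodular order function. If the order function $o$ on~$U$ is submodular, \cref{lem:ordertoordering} gives an injective submodular refinement~$o'$ of~$o$, which is in particular structurally submodular. We need the same~$\vS$ to arise as a sublevel set of~$o'$. The proof of \cref{lem:ordertanglestoorderingtangles} provides this: since $o'$ refines~$o$, the set $S=U_\ell$ is an initial segment of~$U$ under~$o'$, so there is $\ell'$ with $S=\{s\in U: o'(s)<\ell'\}$.

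Nothing else in the hypotheses depends on the order function. This covers $\vS$, its trivial elements, the notion of a star, the definition of emulation (which only asks that $\vt\land\vr\in\vS$), and the set of $\F$-tangles of~$S$. One point needs care. Closure under shifting refers to `weakly eclipses', which does depend on the order. If $o(r)<o(s)$ then $o'(r)<o'(s)$, but weak eclipsing under~$o'$ does not imply it under~$o$. Under~$o'$ weak eclipsing is the same as eclipsing, and $o'(r)<o'(s)$ implies $o(r)\le o(s)$, because $o(r)>o(s)$ would force $o'(r)>o'(s)$. So eclipsing under~$o'$ implies weak eclipsing under~$o$. Hence closure under shifting with respect to~$o$ implies closure under shifting with respect to~$o'$, and we may assume that the order function is injective and structurally submodular.

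Now apply \cref{lem:psiefficienter}. Its hypothesis asks for stars of \emph{non-trivial} separations, and this holds automatically because $\vS$ has no trivial elements. So $\F$ is rich for~$\vS$. With richness established, \cref{cor:E} applies: $\vS$ is ordered, has no trivial separations, and $\F\subseteq 2^{\vS}$ is a rich set of stars. It yields either an $\F$-tangle of~$S$ or an $S$-tree over~$\F$, as required. Standardness of~$\F$ is not needed for \cref{cor:E}. Even so, \cref{thm:generalduality2} as used inside its proof asks for it, and this requirement is vacuous here, since there are no trivial $\vs$.

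The only delicate step is the transfer of the shifting-closure hypothesis to the refined order function, handled above. One should also check that \cref{lem:psiefficienter} is phrased with $S=U_\ell$ in the refined order, which is exactly what the choice of~$\ell'$ secures.
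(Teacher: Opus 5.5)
Your proposal is correct and follows the paper's proof: pass to an injective structurally submodular refinement of the order function under which $S$ is still a sublevel set $U_{\ell'}$, get richness from \cref{lem:psiefficienter}, and conclude with \cref{cor:E}. You cite \cref{lem:ordertoordering} where the paper cites \cref{lem:newordertoordering}, which makes no difference, and your check that closure under shifting survives the refinement makes explicit a step the paper leaves implicit; your claim that weak eclipsing and eclipsing coincide under an injective order fails only when $\vr=\sv$, but weak eclipsing under the original order then holds trivially, so the transfer still goes through.
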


\begin{proof}
Recall from the proof of \cref{lem:ordertanglestoorderingtangles} that if $S$ has the form of $S=U_\ell$ with respect to a given order function~$o$, it also has this form with respect to any refinement of~$o$. By \cref{lem:newordertoordering} we may therefore assume that our order function on~$U$ is injective and structurally submodular. By \cref{lem:psiefficienter}, $\F$~is rich for~$\vS$. The~theorem now follows from \cref{cor:E}.
\end{proof}

While \cref{thm:newduality} is probably to be the most widely applicable tangle-tree duality theorem to date, it may appear restrictive that $S$ must have the form of~$U_\ell$, and that we need a submodular order function on~$U\!$. The original tangle-tree duality theorem from~\cite{TangleTreeAbstract}, \cref{thm:prevduality}, makes no such assumptions. However, it makes another assumption, one not made in \cref{thm:newduality}: that $\vS$ is $\F$-separable.

This is not a coincidence: $\F$-separability is essentially a technical summary of exactly those consequences of our two more natural assumptions, submodularity and ${S=U_\ell}$, that are technically needed in the proof of \cref{thm:prevduality}.

Finally, the premise of \cref{thm:newduality} differs from that of \cref{thm:prevduality} in that it requires~$\vS$ not to have trivial elements.%
   \COMMENT{}
   We need this for our conversion of $\F$-trees into $S$-trees over~$\F$ in \cref{thm:main}. But it entails no loss of generality: the trivial separations of any separation system lie in all consistent orientations, so we can just delete them from our given system and add them back later if desired~\cite{ASS,TreeSets}.

\bibliographystyle{alpha}
\bibliography{collective}

\end{document}